\documentclass[12pt]{amsart}

\usepackage{amsmath,amsthm,amscd,euscript,longtable}
\usepackage{graphicx}
\usepackage{epstopdf}
\setlength{\textheight}{21.02cm} \topmargin=-.01cm
\oddsidemargin=-.01cm \evensidemargin=-.01cm \textwidth=16.02cm

\def \r{\mathbb R}

\def \z{\mathbb Z}

\def \({\langle}
\def \){\rangle}

 \makeatletter
 \makeatother

\hyphenation{Ko-ko-tsa-kis}

\DeclareMathOperator{\modd}{mod}
\DeclareMathOperator{\vrm}{Vrm}
 
\DeclareMathOperator{\sign}{sign} 

\newtheorem{theorem}{Theorem}[section]
\newtheorem{lemma}[theorem]{Lemma}

\newtheorem{proposition}[theorem]{Proposition}
\newtheorem{corollary}[theorem]{Corollary}
\theoremstyle{remark}
\newtheorem{remark}[theorem]{Remark}
\theoremstyle{definition}
\newtheorem{definition}[theorem]{Definition}
\newtheorem{example}[theorem]{Example}
\newtheorem{problem}{Problem}
\newtheorem{conjecture}[problem]{Conjecture}

\title[Geometry and combinatoric of Minkowski--Voronoi complexes]
{Geometry and combinatoric of Minkowski--Voronoi 3-dimesional continued fractions}

\author{Oleg Karpenkov}
\address[Oleg Karpenkov]{Department of Mathematics,
Mathematical Sciences Building,
Peach Street, University of Liverpool,
Liverpool L69 7ZL, United Kingdom}
\email{karpenk@liv.ac.uk}

\author{Alexey Ustinov}
\address[Alexey Ustinov]
{Pacific National University
136 Tihookeanskaya st., Khabarovsk, 680035, Russia
AND
Institute of Applied Mathematics, Khabarovsk Division,
Far Eastern Branch of the Russian Academy of Sciences
54 Dzerzhinsky Street, Khabarovsk, 680000, Russia}
\email{ustinov@iam.khv.ru}

\date{1 February 2017}

\thanks
{
Oleg Karpenkov is partially supported by EPSRC grant EP/N014499/1 (LCMH)
}
\thanks{
Alexey Ustinov is supported by Grant of the Government of Khabarovsk Krai (Order N 479-p, June 29, 2016).
}

\keywords{Minkowski minima, Minkowski-Voronoi continued fraction, lattice geometry}

\usepackage{color}

\begin{document}
\input{epsf}

\maketitle

\begin{abstract}
In this paper we investigate the combinatorial structure of 3-dimensional Minkowski-Voronoi continued fractions.
Our main goal is to prove the asymptotic stability of Minkowski-Voronoi complexes in special two-parametric families
of rank-1 lattices.
In addition we construct explicitly the complexes for the case of White's rank-1 lattices
and provide with a hypothetic description in a more complicated settings.
\end{abstract}

\tableofcontents

\section{Introduction}

Consider a pair of positive integers $(a,N)$, and let $a<N$.

Then the lengths of ordinary continued fractions for
$$
\frac{N}{a},
\quad
\frac{N+a}{a},
\quad
\frac{N+2a}{a},
\quad
\frac{N+3a}{a},
\quad
\ldots
$$
coincide.

In this paper we are aiming to generalize this simple statement to the case of triples $(a,b,N)$ where $a$ and $b$ are relatively prime with $N$.
Our main tool will be Minkowski-Voronoi generalization of the continued fractions defined by
such triples.

We show that {\it the space of all triples
$($with additional constrains $b\ge 2$ and $N$ is not divisible by $b$$)$ splits into 2-dimensional families such that
the Minkowski-Voronoi continued fractions in each of these families are almost all combinatorially equivalent
to each other}, see Theorem~\ref{stabilization}, Remark~\ref{finitely_many}, and Corollary~\ref{splitting} for details.

\vspace{2mm}

We believe that this result has a strong potential to initiate a serious progress in theory of multidimensional
continued fractions (and its reflections in the singularity theory). In the first part of the paper we are aiming
to give a comprehensive introduction to theory of Minkowski-Voronoi polyhedra, including computational aspects of the theory.
The middle part is rather technical, it contains complete proof of the main theorem and the associated statements.
The last part of the paper contains experimental results which give rise to several conjectures for further investigation.

\vspace{2mm}

{
\noindent
{\bf Multidimensional continued fractions.}
The question of generalizing continued fractions to the
multidimensional case was risen by the first time by
C.~Hermite~\cite{Hermite1839} in 1839. Since than, many different
generalizations were introduced (see~\cite{KarpenkovBook} for a
general reference).
In this paper we study one of the geometrical generalizations of continued fractions
proposed independently by G.F.~Voronoi~\cite{Voronoi1896,Voronoi1952} and
H.~Minkowski~\cite{Minkowski1896,Minkowski1911}.
Minkowski-Voronoi continued fractions are used as one of the main tools for the calculations of the fundamental units of algebraic fields.
Although the approaches of G.F.~Voronoi and
H.~Minkowski are rather different (see the original
papers~\cite{Delone1940,Delone1947,Hancock1964a,Pepper1939} and the survey~\cite{Ustinov2015}), they
deal with the same geometrical object, with the set of local
minima for lattices (we provide with all necessary definitions in
Section~\ref{Definitions}). One of the most important properties
of the set of local minima is that it admits adjacency relation,
which provides the graph structure on the set of local minima.
This property is essential for the algorithmic applications of
Minkowski-Voronoi continued fractions,
see~\cite{Buchmann1985,Buchmann1985a,Buchmann1987,Buchmann1987a,Cusick1971,Furtwangler1920,Williams1980,Williams1981}.
}

Regular continued fractions are used as a tool in numerous
problems related to two-dimensional lattices. Although
Minkowski-Voronoi continued fractions are much more complicated
combinatorially than classical continued fractions, there is a
number of significant three-dimensional results related to them.
For instance, in~\cite{Cusick1971} and~\cite{Cusick1983}
T.W.~Cusick used Minkowski's algorithm for the study of ternary
linear forms. Using Minkowski-Voronoi continued fractions
G.~Ramharter proved Gruber's conjecture (i.e., Mordell's inverse
problem) on volumes of extreme (admissible centrally symmetric
with faces parallel to the coordinate axes) parallelepipeds,
see~\cite{Ramharter1996}. The beginnings of 3-dimensional analogs
of the Markov spectrum is due to H.~Davenport and
H.P.F.~Swinnerton-Dyer,
see~\cite{Davenport1943,Swinnerton-Dyer1971} (see also isolation
theorems in~\cite{Cassels1955,Cassels1997}). The connections
between Minkowski-Voronoi continued fractions and Klein polyhedra
are studied by V.A.~Bykovskii and O.N.~German,
see~\cite{Bykovskii2006,German2006}. Further there is a
three-dimensional analog of Vahlen's theorem,
see~\cite{Avdeeva2003b,Avdeeva2006b,Bykovskii1999,Ustinov2012,Ustinov2013}.
Minkowski-Voronoi diagrams are known in combinatorial commutative algebra as staircase diagrams (see~\cite{bookk} for more detains).
Statistical properties of Minkowski-Voronoi continued fractions
were studied by elementary methods in~\cite{Illarionov2012}
and by analytic methods based on Kloosterman sums, see~\cite{Ustinov2015}.

Our lodestar in this research is the following classical open problem.
\begin{problem}
Give a combinatorial classification of finite and periodic
Minkowski-Voronoi continued fractions.
\end{problem}
Although the first publications on Minkowski-Voronoi continued fractions were made more than a hundred years ago,
there is nothing known regarding this problem.
Our main result of this paper is the first step towards its solutions.

\vspace{2mm}

{
\noindent
{\bf Discrepancy of lattices.} The second problem we were keeping in mind while working on this paper is as follows.
}

\begin{problem}
Construct infinite families of finite three-dimensional
Minkowski-Voronoi continued fractions with bounded or growing at a
slow rate ``partial quotients''.
\end{problem}

{
\noindent
{\it Remark.}
Partial quotients here are the
coefficients of the transition matrices between the adjacent
relative minima. Such continued fractions correspond to Korobov
nets with small deviations.
}

\vspace{2mm}

Let us briefly recall the situation in the classical case.
There are several classes of regular finite
continued fractions with \textit{predictable}  partial quotients.
Good two-dimensional nets for numerical integration can be
constructed using fractions $a/N$ such that all partial quotients
in the continued fraction expansion are bounded by some small constant
(see for example the book~\cite{Korobov2004}). Clearly we can find
fractions with arbitrary large denominators and all partial
quotients equal to $1$ (the ratio of two consecutive Fibonacci
numbers). For denominators $N=2^m$, $3^m$ H.~Niederreiter found
numerators such that  partial quotients in the corresponding continued
fractions are bounded by $4$, see~\cite{Niederreiter1986}.

Recall that in the theory of uniform distribution \textit{discrepancy} is a measure
of the deviation of a point set from a uniform distribution (see for example~\cite{Korobov2004}).
In numerical analysis multidimensional equidistributed low--discrepancy sets are of the most importance.
It is widely known that in any dimension almost all lattices are equidistributed low--discrepancy sets.
Nevertheless there is no explicit construction known in dimension 3 or higher.
We consider our paper as a preliminary investigations in this direction in dimension~3.


Let us say a few words about discrepancy in the 2-dimensional case.
The discrepancy of a 2-dimensional lattice
$$
\Gamma(a,N)=\{m_1(1,a)+m_2(0,N)\mid m_1,m_2\in\mathbb{Z}\}
$$
depends on the value of maximal partial quotient in the continued fraction expansion for $a/N$.
Recall that the partial quotients are in bijection with the local minima of the lattice $\Gamma(a,N)$.
V.A.~Bykovskii proved a similar results for the lattices in arbitrary dimension, see
\cite{Bykovskii2002b,Bykovskii2003a}.
It is interesting to observe that these results allow to check the goodness of a given lattice in a
polynomial time, see~\cite{Bykovskii2004a,Bykovskii2011a}.

Basing on the analytical properties of local minima in
multidimensional integer lattices V.A.~Bykovskii
in~\cite{Bykovskii2012} proved essentially new bounds for
discrepancy of Korobov nets, which are conjectured to be sharp.
These results justify further investigations of  geometric and
combinatorial properties of local minima.

{
\noindent
{\bf Outline of main results.}
In this paper we are making the first step in the systematic study of Minkowski-Voronoi continued fractions,
our main contribution to the above problems is as follows.
}

\vspace{1mm}

\begin{itemize}
\item
First, we introduce a general natural construction of the Minkowski-Voronoi complex in all dimensions.

\vspace{1mm}

\item
Secondly, we prove that the two-parametric families of lattices considered in Theorem~\ref{stabilization}) has
asymptotically stable Minkowski-Voronoi continued fractions.
We have discovered this surprising phenomenon in a series of experiments that are based on the algorithm described in Section~2.

\vspace{1mm}

\item
Finally,
we give an alphabetic description of the canonical diagrams for White's lattices in dimension three (Theorem~\ref{White}),
and conjecture other alphabetic descriptions for a broader families of lattices (Conjecture~\ref{conjecture-3}).
\end{itemize}

\vspace{2mm}

%
%

{\noindent
{\bf This paper is organized as follows.}
We start in Section 1 with general notation and definitions.
We define minimal sets in the sense of Voronoi and show how to introduce the structure
of the Minkowski-Voronoi complex on the sets of all Voronoi minimal sets.
Further we describe two important representations of this complex in the three dimensional case:
Minkowski-Voronoi tessellations and canonical diagrams.
}

Further in Section 2 we describe the techniques to construct canonical diagrams
of Minkowski-Voronoi complexes for finite axial sets in general position.
We describe the algorithm that produces a canonical diagram for every finite axial set,
and therefore provides the existence of canonical diagrams.
Using this algorithm we have calculated many examples
that leads to the results of the next two sections.

Section 3 contains the main result this article.
We formulate and prove the stabilization theorem of Minkowski-Voronoi complexes
for special two-parametric families in the set of all rank-1 lattices.

Finally in Section 4 we introduce the alphabetical description of canonical diagrams for lattices.
This is a simple way to describe canonical diagram that arisen in the study of various examples.
We conjecture that in the case of the simplest families of lattices (like in the case of lattices corresponding to empty simplices
described by White's theorem, see in~\cite{WhiteCite}) it is possible to describe any canonical diagram with finitely many distinct letters.
In this section we provide several open problems and conjectures regarding the alphabetic description.

\section{Basic notions and definitions}\label{Definitions}

We start this section with general definitions of minimal subsets in the sense of Voronoi for a given set.
Further we define Minkowski-Voronoi complex associated with the set of all minimal subsets.
In particular in the three-dimensional case we describe a Minkowski-Voronoi tessellation of the plane encoding the geometric nature of the complex.
In conclusion of this section we say a few about the case of lattices.

In this paper we work with finite axial subsets of $\r^n_{\ge 0}$ in general position.
\begin{definition}
A subset $S\subset \r^n_{\ge 0}$ is said to be {\it axial} if $S$ contains points on each of the coordinate axes.
\end{definition}

\begin{definition}
We say that an axial subset $S\subset \r^n_{\ge 0}$ is {\it in general position} if the following conditions hold:

({\it i}) Each coordinate plane contains exactly $n-1$ points of $S$ none of which are at the origin;
these points are on different coordinate axes.

({\it ii}) Any plane parallel to a coordinate plane (and distinct to it) contains at most one point of $S$.
\end{definition}

\subsection{Minkowski-Voronoi complex}
Let $A$ be an arbitrary finite subset of the space $\r^n_{\ge 0}=(\r_{\ge 0})^n$.
For $i=1,\ldots, n$ we set
$$
  \max(A,i)=\max\{x_i\mid  (x_1,\ldots, x_n)\in A\}
$$
and define the parallelepiped $\Pi(A)$ as follows
$$
\Pi(A)=\{(x_1,\ldots, x_n) \mid 0\le x_i\le \max(A,i),i=1,\ldots, n\}.
$$

\begin{definition}\label{RelativeMinimium}
Let $S$ be an arbitrary finite axial subset of $\r^n_{\ge 0}$ in general position.
An element $\gamma\in S$ is called
a {\it Voronoi relative minimum} with respect to $S$
if the parallelepiped $\Pi(\{\gamma\})$ contains no points of $S\setminus\{\gamma\}$.
The set of all Voronoi relative minima of $S$ we denote by $\vrm(S)$.
\end{definition}

\begin{definition}\label{def-minimal}
Let $S$ be an arbitrary finite axial subset of $\r^n_{\ge 0}$ in general position.
A finite subset $F\subset \vrm(S)$ is called {\it minimal} if the following two conditions hold:

{\it $($i$)$} parallelepiped $\Pi(F)$ contains no points of $\vrm(S)\setminus F$;

{\it $($ii$)$} all the points of $S$ are contained at the boundary of $\vrm(S)$.
\\
We denote the set of all minimal $k$-element subsets of $\vrm(S)\subset S$ by $\mathfrak{M}_k(S)$.
\end{definition}

It is clear that any minimal subset of a minimal subset is also minimal.

\begin{definition}
Consider a finite subset $S$ in general position.
A {\it Minkowski-Voronoi complex} $MV(S)$ is an $(n-1)$-dimensional complex such that

{\it $($i$)$} the $k$-dimensional faces of $MV(S)$ are enumerated by its minimal $(n{-}k)$-element subsets
(i.e., by the elements of $\mathfrak{M}_{n-k}(S)$);

{\it $($ii$)$} a face with a minimal subset $F_1$ is adjacent to a face with a minimal subset $F_2\ne F_1$ if
and only if $F_1\subset F_2$.
\end{definition}

\begin{remark}
In the three-dimensional case it is also common to consider the Voronoi and Minkowski graphs
that are subcomplexes of the Minkowski-Voronoi complex. They are defined as follows.

\vspace{1mm}

{\noindent
The {\it Voronoi graph}\index{graph!Voronoi} is the graph whose vertices and edges are
respectively vertices and edges of the Minkowski-Voronoi complex.
}

{\noindent
The {\it Minkowski graph}\index{graph!Minkowski} is the graph whose vertices and edges are
respectively faces and edges of the Minkowski-Voronoi complex
(two vertices in the Minkowski graph are connected by an edge if and only if
the corresponding faces in the Minkowski-Voronoi complex have a common edge.
}

%
\end{remark}

\begin{example}\label{ExVoronoi}
Let us consider an example of a 6-element set $S_0\subset\r^3_{\ge 0}$ defined as follows
$$
S_0=\big\{ \gamma_1,\gamma_2,\gamma_3,\gamma_4,\gamma_5,\gamma_6\big\},
$$
where
$$
\begin{array}{ccc}
\gamma_1=(3,0,0),& \gamma_2=(0,3,0),&\gamma_3=(0,0,3),\\
\gamma_4=(2,1,2),& \gamma_5=(1,2,1), &\gamma_6=(2,5,6).\\
\end{array}
$$
There are only five Voronoi relative minima for the set $S_0$, namely the vectors $\gamma_1,\ldots, \gamma_5$.
The Minkowski-Voronoi complex contains 5 vertices, 6 edges, and 5 faces.
The vertices of the complex are
$$
\begin{array}{c}
v_1=\{\gamma_1,\gamma_3,\gamma_4\},\quad
v_2=\{\gamma_3,\gamma_4,\gamma_5\},\quad
v_3=\{\gamma_1,\gamma_4,\gamma_5\},\\
v_4=\{\gamma_2,\gamma_3,\gamma_5\},\quad
v_5=\{\gamma_1,\gamma_2,\gamma_5\}.\\
\end{array}
$$
Notice that the set $\{\gamma_2,\gamma_4,\gamma_5\}$ is not minimal by Definition~\ref{def-minimal}(ii), since $\gamma_5$ is in the interior of $\vrm(\{\gamma_2,\gamma_4,\gamma_5\})$.
Its edges are
$$
\begin{array}{ccc}
e_1=\{\gamma_1,\gamma_3\},&
e_2=\{\gamma_3,\gamma_2\},&
e_3=\{\gamma_1,\gamma_2\},\\
e_4=\{\gamma_3,\gamma_4\},&
e_5=\{\gamma_1,\gamma_4\},&
e_6=\{\gamma_4,\gamma_5\},\\
e_7=\{\gamma_3,\gamma_5\},&
e_8=\{\gamma_1,\gamma_5\},&
e_9=\{\gamma_2,\gamma_5\}.\\
\end{array}
$$
Its faces are
$$
f_1=\{\gamma_1\},\quad
f_2=\{\gamma_2\},\quad
f_3=\{\gamma_3\},\quad
f_4=\{\gamma_4\},\quad
f_5=\{\gamma_5\}.
$$
Finally, we represent the complex $MV(S)$ as a tessellation of an open two-dimensional disk.
We show vertices (on the left), edges (in the middle), and faces (on the right) separately:
$$
\includegraphics{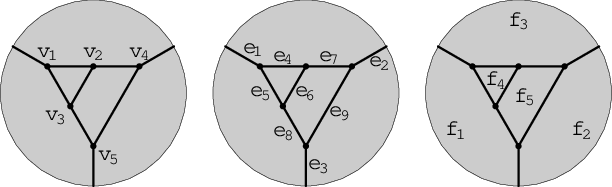}
$$
\end{example}

\subsection{Minkowski-Voronoi tessellations of the plane}

In this subsection we discuss a natural geometric construction standing behind the Minkowski-Voronoi complex
in the three-di\-men\-sional case.

\begin{definition}
Let $S$ be an arbitrary finite axial subset of $\r^3_{\ge 0}$ in general position.
The {\it Minkowski polyhedron} for $S$ is the boundary of the set
$$
S\oplus \r^3_{\ge 0}=\{s+r\mid s\in S, r\in\r^3_{\ge 0}\}.
$$
\end{definition}

In other words, the Minkowski polyhedron is the boundary of the union
of copies of the positive octant shifted by vertices of the set $S$.

\begin{proposition}
The union of the compact faces of the Minkowski polyhedron is contained in
$$
\partial \Big(\bigcup\limits_{A\in \mathfrak{M}_{3}(S)} \Pi(A)\Big).
$$
\qed
\end{proposition}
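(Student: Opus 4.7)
The plan is to verify the inclusion pointwise: for any point $p$ on a compact face of the Minkowski polyhedron $\partial(S\oplus\r^3_{\ge 0})$, I will produce a minimal subset $A\in\mathfrak{M}_3(S)$ with $p\in\Pi(A)$ (placing $p$ in the closure of $\bigcup_{B\in\mathfrak{M}_3(S)}\Pi(B)$), and then find a small coordinate perturbation of $p$ that lies outside every $\Pi(B)$ (placing $p$ in the closure of the complement). Together these put $p$ on $\partial\bigl(\bigcup_B\Pi(B)\bigr)$.

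\emph{Constructing $A$.} Because $p$ lies on a compact face, for each $i\in\{1,2,3\}$ there is some element of $S$ whose $i$-th coordinate is at least $p_i$; otherwise the polyhedron would be unbounded in the direction $+e_i$ through $p$. I would let $\gamma^{(i)}\in\vrm(S)$ be the Voronoi relative minimum with the smallest $i$-th coordinate among those satisfying $\gamma[i]\ge p_i$; uniqueness follows from general position. Setting $A=\{\gamma^{(1)},\gamma^{(2)},\gamma^{(3)}\}$, the identity $\max(A,i)=\gamma^{(i)}[i]\ge p_i$ gives $p\in\Pi(A)$. To verify minimality, suppose $\delta\in\vrm(S)\setminus A$ lies in $\Pi(A)$. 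Then $\delta[i]\le\gamma^{(i)}[i]$ for every $i$, and the extremal choice of each $\gamma^{(i)}$ forces $\delta[i]<p_i$---any equality would imply $\delta=\gamma^{(i)}$ by general position, contradicting $\delta\notin A$. But $\delta<p$ coordinatewise places $p$ in the interior of $S\oplus\r^3_{\ge 0}$, contradicting $p\in\partial(S\oplus\r^3_{\ge 0})$. At edges and vertices of the Minkowski polyhedron, where two of the $\gamma^{(i)}$ may coincide, one adjoins any extra Voronoi minimum bounding the remaining coordinate, and the same argument survives.

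\emph{Perturbing off the union.} Since $p\in\partial(S\oplus\r^3_{\ge 0})$, there is an index $i$ and an element $s\in S$ with $s\le p$ and $s[i]=p_i$; a small extension of the extremal argument lets us take $s\in\vrm(S)$, so $s=\gamma^{(i)}\in A$ and $p_i=\max(A,i)$. The candidate perturbation is $p+\epsilon e_i$ for small $\epsilon>0$. If $p+\epsilon e_i\in\Pi(B)$ for some $B\in\mathfrak{M}_3(S)$, then $\max(B,i)>p_i$ and $\max(B,j)\ge p_j$ for $j\ne i$; now $s\in\Pi(B)\cap\vrm(S)$ and the minimality of $B$ force $s\in B$. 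I then observe that the upper vertex $v_B=(\max(B,1),\max(B,2),\max(B,3))$ of $\Pi(B)$ itself lies on the Minkowski polyhedron---otherwise a Voronoi minimum strictly dominated by $v_B$ would sit inside $\Pi(B)\setminus B$, violating minimality. Because $v_B\ge p+\epsilon e_i$ cannot strictly dominate $p$, one obtains $v_B[j]=p_j$ for some $j\ne i$, and the same extremal reasoning puts $\gamma^{(j)}\in B$. Continuing in this fashion forces $A$ and $B$ to coincide in enough entries to contradict $\max(B,i)>p_i=\max(A,i)$.

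The main obstacle is the combinatorial bookkeeping in the last step: one must carefully track the coordinate equalities propagated from $p\in\partial(S\oplus\r^3_{\ge 0})$ through the assumption $p+\epsilon e_i\in\Pi(B)$ to a clean contradiction, and the general-position hypothesis is essential to avoid coincidental ties that would derail the comparison of $A$ and $B$. Once these details are assembled, the perturbation $p+\epsilon e_i$ lies outside $\bigcup_B\Pi(B)$, verifying the boundary condition and completing the inclusion.
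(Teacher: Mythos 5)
Your two-step strategy (exhibit one minimal triple $A$ with $p\in\Pi(A)$, then exhibit points arbitrarily close to $p$ lying outside every $\Pi(B)$) is the right shape for proving boundary membership, but the construction of $A$ is genuinely broken. In the minimality check you pass from $\delta\in\Pi(A)$ to $\delta[i]\le\gamma^{(i)}[i]$; what $\Pi(A)$ actually gives is $\delta[i]\le\max(A,i)=\max_j\gamma^{(j)}[i]$, which can strictly exceed $\gamma^{(i)}[i]$. This is not a fixable slip of wording: the coordinatewise greedy choice (smallest $i$-th coordinate among relative minima with $i$-th coordinate $\ge p_i$) simply does not produce a minimal triple in general. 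Concretely, take $S=\{(20,0,0),(0,20,0),(0,0,20),(10,8,4),(14,12,1),(2,2,15),(13,6,7)\}$: this is axial, in general position, and all seven points are Voronoi relative minima (they form an antichain). The point $p=(10,10,10)$ lies in the relative interior of the compact face $\{10\}\times[8,20]\times[4,15]$ of the Minkowski polyhedron. Your recipe gives $\gamma^{(1)}=(10,8,4)$, $\gamma^{(2)}=(14,12,1)$, $\gamma^{(3)}=(2,2,15)$, so $\Pi(A)=[0,14]\times[0,12]\times[0,15]$, which contains the relative minimum $(13,6,7)\notin A$; hence $A\notin\mathfrak{M}_3(S)$. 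A minimal triple whose parallelepiped contains $p$ does exist, e.g.\ $\{(0,20,0),(10,8,4),(2,2,15)\}$ with box $[0,10]\times[0,20]\times[0,15]$, but note it uses the $y$-coordinate $20$ rather than the smallest $y$-coordinate $\ge p_2$ --- so membership of $p$ in some minimal box must be obtained by a global selection or shrinking argument (for instance starting from the axial triple, whose box contains every compact face, and using that no relative minimum is coordinatewise strictly below a boundary point $p$), not by per-coordinate extremal choices.

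The defect propagates into your second half: there you assert $p_i=\max(A,i)$ for the constructed $A$, which fails in the same example ($p_1=10$ while $\max(A,1)=14$), so the chain of forced memberships $s\in B$, $\gamma^{(j)}\in B$, \dots\ never gets started correctly, and the final contradiction is in any case only sketched (``continuing in this fashion\dots''), as you acknowledge. Two smaller gaps of the same nature: the degenerate case where several $\gamma^{(i)}$ coincide (e.g.\ $p$ a vertex of the polyhedron equal to a relative minimum) is handled by ``adjoin any extra minimum bounding the remaining coordinate,'' which again need not preserve minimality; and the claim that the active element $s\le p$ with $s[i]=p_i$ can be taken in $\vrm(S)$ needs the (easy, but separate) descent argument that replaces $s$ by a relative minimum below it and then re-selects the coordinate of equality. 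Until the construction of a genuinely minimal triple containing $p$, and of the direction in which to perturb off \emph{all} minimal boxes, are redone on a correct basis, the proof does not go through.
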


\begin{definition}\label{Minkowski-Voronoi tessellation}
The {\it Minkowski-Voronoi tessellation}
for a finite axial set $S\subset\r^3_{\ge 0}$ in general position is a tessellation of the plane $x+y+z=0$
obtained by the following three steps.

{\noindent
{\it Step 1.} Consider the Minkowski polyhedron for the set $S$ and project it orthogonally to the plane $x+y+z=0$.
This projection induces a tessellation of the plane by edges of the Minkowski polyhedron.
}

{\noindent
{\it Step 2.} From the tessellation of Step~1 remove all vertices corresponding to the local minima of the
function $x+y+z$ on the Minkowski polyhedron (these are exactly the images of the relative minima of $S$ under the projection).
Remove also all edges which are adjacent to the removed vertices.
}

{\noindent
{\it Step 3.} After Step~2 some of the vertices are of valence~1.
For each vertex $v$ of valence $1$ and the only remaining edge $wv$ with endpoint at $v$ we replace
the edge $wv$ by the ray $wv$ with vertex at $w$ and passing through $v$.
}
\end{definition}

\begin{proposition}
The Minkowski-Voronoi tessellation for a finite axial set $S$ in general position has the combinatorial structure of
the Minkowski-Voronoi complex $MV(S)$.
\qed
\end{proposition}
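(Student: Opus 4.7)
The plan is to exhibit a label-preserving bijection between the cells of the final tessellation and the elements of $\mathfrak{M}_1(S)$, $\mathfrak{M}_2(S)$, $\mathfrak{M}_3(S)$, and then verify that the incidence relations match those of $MV(S)$. The geometric backbone is the Minkowski polyhedron $P=\partial(S\oplus\r^3_{\ge 0})$.

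First, I would analyze the face structure of $P$ under the general position hypothesis. For each $s\in\vrm(S)$, the shifted octant $s+\r^3_{\ge 0}$ contributes three coordinate-plane sub-faces to $P$, meeting at the ``lower'' vertex $s$ along three axis-parallel edges. Two sub-faces from distinct relative minima $s_1,s_2$ intersect along a one-dimensional part of $P$ iff $\{s_1,s_2\}\in\mathfrak{M}_2(S)$, and three such sub-faces from distinct $s_1,s_2,s_3$ meet at an ``upper'' vertex of $P$ iff $\{s_1,s_2,s_3\}\in\mathfrak{M}_3(S)$. The general position hypothesis guarantees that the $(1,1,1)$-projection onto $x+y+z=0$ is combinatorially faithful, so Step~1 produces a planar tessellation of the same combinatorial type as the compact portion of $P$.

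Next, I would track Steps~2 and~3. Step~2 removes each lower vertex $s$ together with the three axis-parallel edges of $P$ emanating from $s$; these were the only walls separating the projections of the three sub-faces at $s$, so after Step~2 those three projections merge into a single 2-cell naturally labeled by $s\in\mathfrak{M}_1(S)$. The remaining essential vertices (those of valence $\ge 3$, once any surviving valence-2 points are treated as interior to 1-cells) biject with $\mathfrak{M}_3(S)$, and bounded 1-cells correspond to minimal pairs lying in two minimal triples. A pair $\{s_1,s_2\}\in\mathfrak{M}_2(S)$ lying in only one minimal triple yields a 1-cell that dangles at the upper vertex of that triple, producing a valence-one vertex; Step~3 then extends the dangling segment into an infinite ray, so that the 1-cells of the final tessellation biject with the whole of $\mathfrak{M}_2(S)$.

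The incidence relations are now immediate: a 2-cell labeled $\{s\}$ borders a 1-cell labeled $\{s_1,s_2\}$ iff $s\in\{s_1,s_2\}$, and a 1-cell labeled $\{s_1,s_2\}$ meets a vertex labeled $\{s_1,s_2,s_3\}$ iff $\{s_1,s_2\}\subset\{s_1,s_2,s_3\}$, matching the face-inclusion relations defining $MV(S)$. The main obstacle is the bookkeeping for Step~3 together with the handling of the ``semi-degenerate'' vertices of $P$ (where one sub-face of one octant is cut off by two sub-faces of another octant)---these survive Step~2 but must be combinatorially absorbed into the interior of 1-cells, and one needs to check that general position ensures such absorption is always possible.
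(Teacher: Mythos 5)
Your overall strategy is the natural one, and in fact the paper offers no argument at all for this proposition (it is stated with a \qed as immediate from the construction), so fleshing it out as a label-preserving bijection $\{\text{2-cells},\text{1-cells},\text{vertices}\}\leftrightarrow\mathfrak{M}_1(S),\mathfrak{M}_2(S),\mathfrak{M}_3(S)$ plus an incidence check is exactly what is needed; your identification of the ``semi-degenerate'' pair vertices $\max(s_1,s_2)$ as valence-2 points to be absorbed into 1-cells, and of pairs lying in a single minimal triple as the source of the Step~3 rays, is correct.

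There is, however, one concrete flaw in the bookkeeping for the 2-cells. You claim that already after Step~2 the three sub-face projections at each lower vertex $s$ merge into a single 2-cell ``naturally labeled by $s\in\mathfrak{M}_1(S)$.'' This fails for the three axial minima $(N_1,0,0)$, $(0,N_2,0)$, $(0,0,N_3)$: at such a point two of the three faces of $P$ meeting there are \emph{not} private sub-faces of its own octant but the unbounded faces of $P$ lying in the coordinate planes, and each coordinate-plane face is a single face of $P$ adjacent to \emph{two} axial minima (e.g.\ the face in $\{z=0\}$ touches both $(N_1,0,0)$ and $(0,N_2,0)$). Consequently, after Step~2 the would-be cells of the three axial minima are glued into one connected unbounded region, and the asserted bijection of 2-cells with $\mathfrak{M}_1(S)$ does not yet hold; it is restored only when the three Step~3 rays (one for each axial pair, i.e.\ each minimal pair whose two points share a zero coordinate — these are exactly the valence-1 vertices) are inserted and cut the unbounded region into three cells. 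As written, your incidence verification for the unbounded faces would therefore be carried out on the wrong cell decomposition. The fix is routine — note that the shared coordinate-plane faces are precisely what make those pair vertices valence~1 rather than valence~2, and check the 2-cell/minimum bijection and the face–edge incidences only after Step~3 — but it must be made, since the same phenomenon is also hidden in your opening claim that every $s\in\vrm(S)$ has three axis-parallel edges of $P$ separating three sub-faces ``contributed'' by its own octant (the paper itself restricts the analogous three-faces statement to minima not lying in the coordinate planes).
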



\begin{example}\label{ExVoronoi2}
Consider the set $S_0$ as in Example~\ref{ExVoronoi}.
In Figure~\ref{mv-7} we show the Minkowski polyhedron (on the left) and the corresponding
Minkowski-Voronoi tessellation (on the right).
The local minima of the function $x+y+z$ on the Minkowski polyhedron for $S_0$ are in bijection with the the relative minima
$f_1,\ldots, f_5$. Without loss of generality, we denote the local minima by $\hat f_1, \ldots, \hat f_5$:
$$
\begin{array}{c}
\hat f_1=(3,0,0), \quad \hat f_2=(0,3,0), \quad \hat f_3=(0,0,3),\\
\hat f_4=(2,1,2), \quad \hat f_5=(1,2,1).
\end{array}
$$
They identify the faces of the complex $MV(S_0)$.

The local maxima of the function $x+y+z$ on the Minkowski polyhedron for $S_0$ are
$\hat v_1,\ldots, \hat v_5$, corresponding to the vertices $v_1,\ldots, v_5$ of the complex $MV(S_0)$.
The vertices $\hat v_1,\ldots, \hat v_5$ are as follows:
$$
\begin{array}{c}
\hat v_1=(3,1,3),\quad
\hat v_2=(2,2,3),\quad
\hat v_3=(3,2,2),\\
\hat v_4=(1,3,3),\quad
\hat v_5=(3,3,1).
\end{array}
$$

\begin{figure}[t]
$$\includegraphics{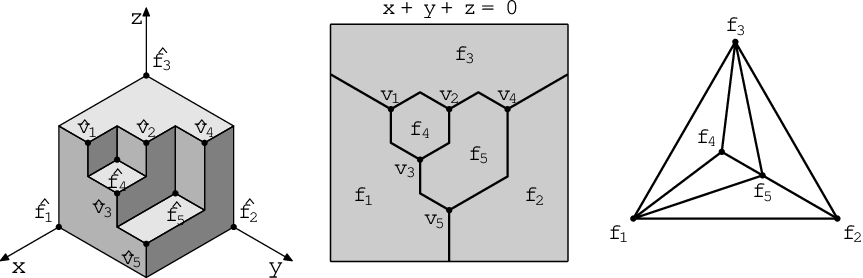}$$
\caption{The Minkowski polyhedron (on the left), the corresponding
Minkowski-Voronoi tessellation (in the middle), and the Minkowski graph (on the right).}\label{mv-7}
\end{figure}
\end{example}

\subsection{Canonical diagrams of three-dimensional Minkowski-Voronoi complexes for finite axial sets in general position}
Let us first describe a canonical labeling of edges and vertices of the Minkowski-Voronoi complex.

\subsubsection{Labels for edges and vertices}

Consider a minimal 3-element subset $\{\gamma_1,\gamma_2,\gamma_3\}$, it is a vertex of the Minkowski-Voronoi complex.
There are exactly three edges that are adjacent to this vertex.
They are enumerated by minimal 2-element subsets $\{\gamma_1,\gamma_2\}$,  $\{\gamma_1,\gamma_3\}$, and  $\{\gamma_2,\gamma_3\}$.
Hence there are exactly three vertices (this may include a vertex at infinity)
that are connected with $\{\gamma_1,\gamma_2,\gamma_3\}$ by an edge.
In each of these vertices the corresponding minimal 3-element subset has exactly two elements in $\{\gamma_1,\gamma_2,\gamma_3\}$.
Without loss of generality we consider one of them which is $\{\gamma_1,\gamma_2,\gamma_3'\}$.
Notice that
$$
\Pi(\{\gamma_1,\gamma_2,\gamma_3\})\cap\Pi(\{\gamma_1,\gamma_2,\gamma_3'\})=\Pi(\{\gamma_1,\gamma_2\}).
$$
Hence the parallelepipeds $\Pi(\{\gamma_1,\gamma_2,\gamma'_3\})$ is obtained from $\Pi(\{\gamma_1,\gamma_2,\gamma_3\})$
by increasing one of its sizes and by decreasing some other.
This gives a natural coloring of each edge pointing out of the vertex $(\{\gamma_1,\gamma_2,\gamma_3\})$ into six colors
(each color indicate which coordinate we increase, which we decrease, and which stays unchanged).

\begin{definition}\label{reflabel}
To each color we associate one of the directions $\frac{k\pi}{3}$ (where $k=0,1,\ldots,5$)
according to the scheme shown on Figure~\ref{faces-3} from the left.
\begin{figure}[t]
$$\includegraphics{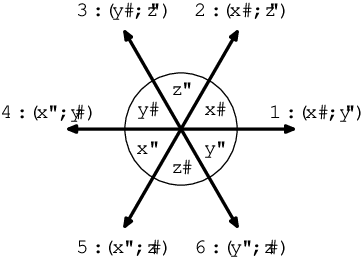}\qquad\qquad\includegraphics{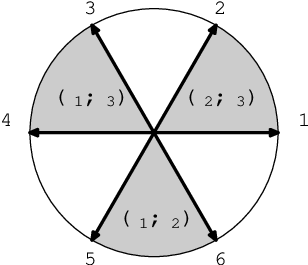}$$
\caption{Six different directions corresponding to different colors.}\label{faces-3}
\end{figure}
We call such direction of an edge the {\it labeling} of this edge.

The {\it labeling} of a vertex is a collection of all labelings for all finite edges adjacent to this vertex.
\end{definition}

\begin{example}\label{ExVoronoi3}
Consider the set $S_0$ as in Example~\ref{ExVoronoi} (and in Example~\ref{ExVoronoi2}).
By $[x,y,z]$ we denote the parallelepiped whose vertices are $(\pm x, \pm y, \pm z)$.
Then the parallelepipeds corresponding to the vertices $v_1,\ldots, v_5$ are respectively
$$
\begin{array}{c}
\Pi_1=[3,1,3],\quad
\Pi_2=[2,2,3],\quad
\Pi_3=[3,2,2],\\
\Pi_4=[1,3,3],\quad
\Pi_5=[3,3,1].
\end{array}
$$
Consider the oriented edge $v_1v_2$, recall that in notation of Example~\ref{ExVoronoi} it is $e_4=\{\gamma_3,\gamma_4\}$.
The parallelepiped $\Pi_2=[2,2,3]$ is obtained from $\Pi_1=[3,1,3]$ be decreasing $x$ coordinate and increasing $y$ coordinate.
Hence the label for this edge is $(x\downarrow, y\uparrow)$.

In a similar way we find labels for the remaining five compact edges (here we follow the notation of Example~\ref{ExVoronoi}). We have:
\begin{itemize}
\item The edge $e_5=\{\gamma_1,\gamma_4\}$ connects $v_1$ with $v_3$. The label is $(y\uparrow,z\downarrow)$.

\item The edge $e_6=\{\gamma_4,\gamma_5\}$ connects $v_2$ with $v_3$. The label is $(x\uparrow,z\downarrow)$.

\item The edge $e_7=\{\gamma_3,\gamma_5\}$ connects $v_2$ with $v_4$. The label is $(x\downarrow,y\uparrow)$.

\item The edge $e_8=\{\gamma_1,\gamma_5\}$ connects $v_3$ with $v_5$. The label is $(y\uparrow,z\downarrow)$.

\item The edge $e_9=\{\gamma_2,\gamma_5\}$ connects $v_4$ with $v_5$. The label is $(x\uparrow,z\downarrow)$.
\end{itemize}

\vspace{2mm}
Finally the edges $e_1$, $e_2$, $e_3$ are rays, one can say that {\it they have one vertex at infinity}; we do not define labels for them.
\end{example}

\subsubsection{Geometric structure of vertex-stars}

Without loss of generality we suppose that $\gamma_1$ has the greatest $x$-coordinate among the $x$-coordinates of $\gamma_1$, $\gamma_2$, and $\gamma_3$;
let $\gamma_2$ have the greatest $y$-coordinate, and let $\gamma_3$ have the greatest $z$-coordinate.
There is only one edge adjacent to $\{\gamma_1,\gamma_2,\gamma_3\}$ along which the first coordinate is decreasing,
it is $\{\gamma_2,\gamma_3\}$. Therefore, each vertex that is not adjacent to infinite edges has exactly one edge in
direction 1 or 2  (see Figure~\ref{faces-3} from the right). Similarly, each vertex has exactly only edge in
directions 3 or 4, and one edge in directions 5 or 6.
Hence the following statement is true.

\begin{proposition}\label{types}
Each vertex of the Minkowski-Voronoi complex that is not adjacent to infinite edges
has one of the following eight labels
$$
\includegraphics{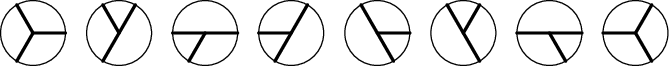}.
$$
\qed
\end{proposition}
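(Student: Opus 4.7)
The plan is to establish that the three edges emanating from a finite vertex $v=\{\gamma_1,\gamma_2,\gamma_3\}$ occupy, respectively, one direction from each of the three disjoint pairs $\{1,2\}$, $\{3,4\}$, $\{5,6\}$ of color directions, which at once yields $2^3=8$ possible label configurations, namely the eight diagrams depicted in the statement.

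First I would normalize the notation: by the general position hypothesis the three $x$-coordinates of $\gamma_1,\gamma_2,\gamma_3$ are pairwise distinct (and similarly for $y$ and $z$), so after relabeling I may assume that $\gamma_i$ carries the strictly largest $i$-th coordinate among the three. The three finite edges at $v$ are then indexed by $i\in\{1,2,3\}$: edge $e_i$ corresponds to the minimal 2-subset $\{\gamma_j,\gamma_k\}$ (with $\{i,j,k\}=\{1,2,3\}$) and terminates at another finite vertex $v'=\{\gamma_j,\gamma_k,\gamma'\}$.

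The second and central step is to identify which coordinate shrinks along $e_i$. From the identity $\Pi(v)\cap\Pi(v')=\Pi(\{\gamma_j,\gamma_k\})$ established in the discussion preceding the proposition, the $i$-th side of the intersection equals $\max\{x_i(\gamma_j),x_i(\gamma_k)\}$, which is strictly smaller than $x_i(\gamma_i)$. Writing the same $i$-th side as $\min\{x_i(\gamma_i),\max_i\Pi(v')\}$ and equating it with the previous expression forces $x_i(\gamma')<x_i(\gamma_i)$, so the $i$-th side of $\Pi(v')$ is strictly smaller than that of $\Pi(v)$. By the coloring convention just fixed, exactly one side of the parallelepiped grows along $e_i$ and exactly one shrinks; here the shrinking side is the $i$-th, so the growing side must be either the $j$-th or the $k$-th. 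Reading off the correspondence in Figure~\ref{faces-3}, this restricts the color of $e_i$ to exactly two of the six directions: the pair $\{1,2\}$ for $i=1$, the pair $\{3,4\}$ for $i=2$, and the pair $\{5,6\}$ for $i=3$.

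The final step is pure bookkeeping: since the three edges shrink three different coordinates, their colors are chosen independently from three disjoint 2-element subsets of directions, producing exactly $2\cdot 2\cdot 2=8$ possible labelings, which match the eight patterns displayed in the statement. I expect the main obstacle to lie in the middle step, specifically in verifying $x_i(\gamma')<x_i(\gamma_i)$: this is the place where both the intersection identity for adjacent parallelepipeds and the general position hypothesis (to rule out coordinate coincidences) are genuinely used. Once this coordinate-shrinkage principle is established, the remaining combinatorics is immediate.
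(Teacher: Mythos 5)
Your argument is correct and is essentially the paper's own: the paper likewise normalizes so that $\gamma_i$ carries the largest $i$-th coordinate, observes that the unique edge decreasing the $i$-th coordinate is the one dropping $\gamma_i$, and concludes that each vertex has one edge in directions $\{1,2\}$, one in $\{3,4\}$, and one in $\{5,6\}$, giving the eight labels. You merely spell out, via the intersection identity $\Pi(v)\cap\Pi(v')=\Pi(\{\gamma_j,\gamma_k\})$, the coordinate-shrinkage step that the paper states without detail, so no substantive difference.
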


\subsubsection{Definition of canonical diagrams}

Labeling of vertices and edges gives rise to the following geometric definition.

\begin{definition}
Consider a finite axial set $S\subset \r^3_{\ge 0}$ in general position.
We say that an embedding of the 1-skeleton of the Minkowski-Voronoi complex $MV(S)$
to the plane with linear edges (some of them are infinite rays corresponding to infinite edges)
is a {\it canonical} diagram of $S$ if the following conditions hold:

--- All finite edges are straight segments in the directions of their labels  (i.e, $\frac{k\pi}{3}$ for some $k\in \z$).

--- Each vertex that does not contain infinite edges is a vertex of one of 8 types described in Proposition~\ref{types}.
\end{definition}

\begin{example}
Consider the Minkowski-Voronoi complex for the set:
$$
S=\{(6,0,0),(0,6,0),(0,0,6), (1,2,3), (2,4,2),(3,3,1), (4,1,4)\}.
$$
It is shown on Figure~\ref{faces-2} from the left (we skip the construction here).
Each vertex of the complex in the picture is represented by an appropriate label.
Notice that we have exactly three vertices adjacent to infinite edges.
On Figure~\ref{faces-2} from the right we show the canonical diagram of this Minkowski-Voronoi complex.

\begin{figure}[t]
$$\includegraphics{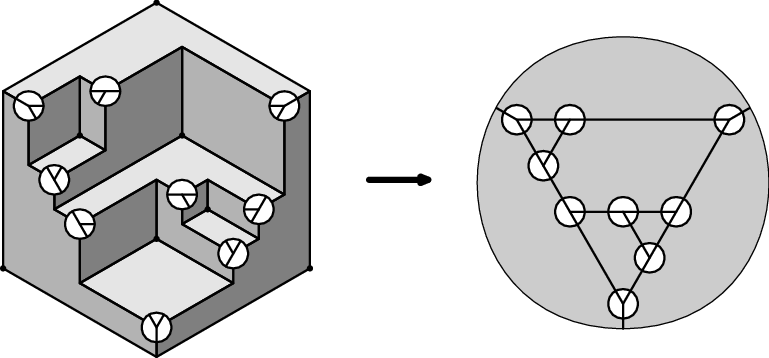}$$
\caption{A Minkowski-Voronoi complex and its canonical diagram.}\label{faces-2}
\end{figure}
\end{example}

In Section~\ref{Canonical diagrams} (Theorem~\ref{MinTheorem}) below we prove the existence
of canonical diagrams for the case of finite axial subsets of $\r^3_{\ge 0}$ in general position
and give an algorithm to construct them.

\vspace{2mm}

\subsubsection{Geometry of faces in canonical diagrams}

Consider the Minkowski polyhedron for a finite axial subset $S\subset \r^3_{\ge 0}$ in general position.
Let $\gamma_0$ be any of the local minima of $S$  not contained in the coordinate planes.
Then there are exactly three faces of the Minkowski polyhedron that meet at this minimum:
each of such faces is parallel to one of the coordinate planes (see Figure~\ref{faces.1} from the left).
The boundary of their union consists of three ''staircases'' in the corresponding planes.
Each staircase may have an arbitrary nonnegative number of stairs.
On Figure~\ref{faces.1} from the left such staircases have 0, 2, and 4 stairs respectively.
By Minkowski convex body theorem, each staircase has only finitely many stairs.

\begin{figure}[t]
$$\includegraphics{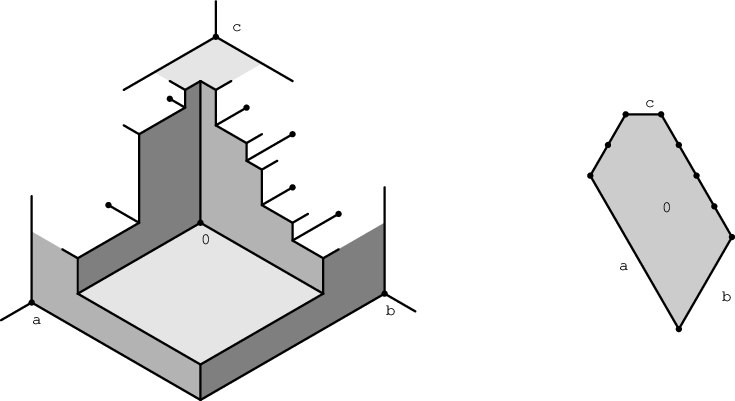}$$
\caption{The union of three faces of Minkowski polyhedron representing $\{\gamma_0\}$ (left);
the canonical diagram for $\{\gamma_0\}$ (right)}\label{faces.1}
\end{figure}

So there is a natural cyclic order for all the relative minima that are neighbors of $\gamma_0$.
There are three such neighbor minima for $\gamma_0$; these are the minima that we capture if we
start to increase one of the dimensions of $\Pi(\gamma_0)$, namely $x$-, or $y$-, or $z$-dimension.
We denote such minima by $\gamma_a$, $\gamma_b$, and $\gamma_c$ respectively (here we do not consider the boundary case
when one of such minima does not exist).
All the other neighboring minima are obtained by increasing simultaneously some two dimensions of the parallelepiped
$\Pi(\gamma_0)$.

\begin{theorem}\label{face-shape}
Each finite face in a canonical diagram has a combinatorial type
$$\includegraphics{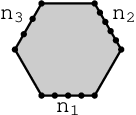}$$
for some nonnegative integers $n_1,n_2,n_3$ $($where $n_1$, $n_2$, and $n_3$ are the number of segments on the corresponding edges$)$.
\qed
\end{theorem}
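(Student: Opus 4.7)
The plan is to read off the boundary of each face of the canonical diagram from the staircase description of the Minkowski polyhedron given just before the statement, and then use Definition~\ref{reflabel} to determine the direction of each boundary edge. First, I would identify the finite face of $MV(S)$ associated with a relative minimum $\gamma_0$ with the cyclically ordered list of neighbours of $\gamma_0$ in the Minkowski polyhedron: every boundary edge of the face has the form $\{\gamma_0,\gamma\}$ and is shared by two vertices $\{\gamma_0,\gamma,\gamma'\}$ and $\{\gamma_0,\gamma,\gamma''\}$, and the cyclic order of the admissible $\gamma$'s is dictated by the three staircases in the three coordinate-plane-parallel faces of the Minkowski polyhedron meeting at $\gamma_0$.

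Second, I would split this cyclic list into three arcs, one per staircase, separated by the three distinguished neighbours $\gamma_a,\gamma_b,\gamma_c$ obtained by increasing only one dimension of $\Pi(\{\gamma_0\})$. Inside a single arc, say the one coming from the staircase parallel to the $xy$-plane with $n_3$ stairs, every boundary edge $\{\gamma_0,\gamma\}$ has the property that passing from one stair to the next strictly increases one of the $x$- or $y$-dimensions of $\Pi(\{\gamma_0,\gamma\})$ and strictly decreases the other, with the roles of the two coordinates fixed along the entire arc. By Definition~\ref{reflabel} these $n_3$ edges therefore share the same colour, producing a single straight side of $n_3$ collinear segments in the canonical diagram; the analogous argument for the other two staircases produces straight sides of $n_1$ and $n_2$ segments.

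Third, I would treat the three bridge edges $\{\gamma_0,\gamma_a\}$, $\{\gamma_0,\gamma_b\}$, $\{\gamma_0,\gamma_c\}$. Each of them lies between the ends of two consecutive straight sides, and its label corresponds to increasing the unique coordinate that is left unchanged by the two adjacent staircases; these three labels account for exactly the colours complementary to those already used on the straight sides. Combined with the local admissibility check at each corner vertex afforded by Proposition~\ref{types}, this shows that the boundary of the face is, in cyclic order, a side with $n_1$ segments, a bridge, a side with $n_2$ segments, a bridge, a side with $n_3$ segments, and a bridge -- exactly the hexagonal combinatorial type displayed in the statement.

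The main obstacle, in my view, is the careful verification that the cyclic order on the neighbours induced by the staircase geometry agrees with the cyclic order along the boundary of the face in the planar embedding, and the correct treatment of the degenerate cases $n_i=0$: when a staircase is empty the two bridge edges at its ends become consecutive, and one must check that the resulting corner is still one of the eight legal vertex types of Proposition~\ref{types}. Finite termination of each staircase is automatic from Minkowski's convex body theorem, as already noted in the paragraph preceding the statement, so no extra work is required to bound the $n_i$.
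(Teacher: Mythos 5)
Your argument is essentially the paper's own justification: the paper states this theorem without a separate proof, relying precisely on the preceding discussion of the three coordinate-parallel faces of the Minkowski polyhedron meeting at $\gamma_0$, their boundary staircases, and the distinguished neighbours $\gamma_a,\gamma_b,\gamma_c$, which is exactly the decomposition you spell out. Your reading of the edge labels along each staircase arc and at the three bridge edges is consistent with Definition~\ref{reflabel} and Proposition~\ref{types}, so the proposal is correct and takes the same route, only in more explicit detail.
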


Notice that if all $n_0$, $n_1$, and $n_2$ are zeroes, we have the simplest possible type of face -- the $\nabla$-shaped triangle.


\section{Construction of canonical diagrams}\label{Canonical diagrams}

In this section we describe the algorithm to construct canonical diagrams of Minkowski-Voronoi complexes for finite axial sets in general position.
We do not have an intention to optimize the location of points in the diagram, so some faces in them
may become quite narrow. In order to improve the visualization of the diagram itself one should apply
``Schnyder wood'' techniques, see~\cite{Felsner2008}.
The algorithm of this section always returns a canonical diagram, which provides the existence of
canonical diagrams. We formulate this statement as follows.

\begin{theorem}\label{MinTheorem}
Let $S$ be a finite axial subset of $\r^3_{\ge 0}$ in general position.
Then $MV(S)$ admits a canonical diagram.
\end{theorem}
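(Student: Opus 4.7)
The plan is to prove this by induction on $|\vrm(S)|$, constructing the canonical diagram by successively adjoining the non-axial Voronoi relative minima one at a time.

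\emph{Base case.} When $\vrm(S) = \{e_1, e_2, e_3\}$ consists only of the three axial points, the set $\mathfrak{M}_3(S)$ contains the single triple $\{e_1,e_2,e_3\}$ and $\mathfrak{M}_2(S)$ its three pairs, so $MV(S)$ has one interior vertex with three infinite edges and a single unbounded $\nabla$-shaped face. Drawing the three rays in any three of the six allowed directions $k\pi/3$ (one from each of the pairs $\{1,2\}$, $\{3,4\}$, $\{5,6\}$) yields a canonical diagram.

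\emph{Inductive step.} First order the non-axial relative minima $\gamma_1,\ldots,\gamma_k$ so that, for every $i$, the truncated set $S_i = \{e_1,e_2,e_3,\gamma_1,\ldots,\gamma_i\}$ is axial in general position and satisfies $\vrm(S_i) = \vrm(S_{i-1}) \cup \{\gamma_i\}$; such an ordering can be obtained, for instance, by processing minima in decreasing order of $\max(x_1,x_2,x_3)$, which guarantees that each $\gamma_i$ is a relative minimum of $S_i$ and that no previously-placed minimum is dominated by it. Assume by induction that we have a canonical diagram $\mathcal{D}_{i-1}$ for $S_{i-1}$. To insert $\gamma_i$ we carve a new face $f_i = \{\gamma_i\}$ out of $\mathcal{D}_{i-1}$: by Theorem~\ref{face-shape} this face has combinatorial type determined by the three nonnegative integers $(n_1,n_2,n_3)$ counting stairs on the three coordinate staircases at $\gamma_i$ on the Minkowski polyhedron of $S_i$, and by Proposition~\ref{types} each newly-created interior vertex carries one of the 8 allowed labels. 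One then places the new vertices along the three subdivided edges of $f_i$ in the prescribed directions $k\pi/3$, and translates or rescales the portion of $\mathcal{D}_{i-1}$ lying beyond these edges (an operation which preserves both edge directions and vertex types) to accommodate the new face.

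\emph{Main obstacle.} The hard part will be the simultaneous compatibility of combinatorics and geometry. Even though Proposition~\ref{types} and Theorem~\ref{face-shape} determine the local structure around any new vertex or face, one must verify that the global planar embedding can always be realized with straight edges in only six directions, without edges crossing or vertices colliding. The key observation that should make this work is that any face in a canonical diagram admits independent stretching along each of its three axial directions without changing its combinatorial type or the directions of its edges; propagating these stretches outward from the insertion site provides enough flexibility to absorb the new face. Handling the boundary interaction with the infinite edges reaching to the coordinate axes, and making the propagation argument quantitative, will be the principal technical content of the algorithm described in Section~\ref{Canonical diagrams}.
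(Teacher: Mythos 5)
Your proposal is an induction scheme, not yet a proof: the step you label the ``main obstacle'' is exactly the content of the theorem, and you leave it unresolved. Two concrete gaps. First, the insertion step is more than ``carving a new face out of $\mathcal{D}_{i-1}$.'' When $\gamma_i$ is added, every minimal pair or triple of $S_{i-1}$ whose parallelepiped contains $\gamma_i$ ceases to be minimal, so vertices and edges of $MV(S_{i-1})$ are destroyed while new ones containing $\gamma_i$ appear; you never describe this surgery, nor verify that after it the drawing still has all finite edges in the six directions and all interior vertices among the eight types of Proposition~\ref{types}. Citing Proposition~\ref{types} and Theorem~\ref{face-shape} only constrains the local picture of the \emph{new} complex; it does not show that the old drawing, locally modified, realizes it. Second, the ``key observation'' that a face can be stretched independently along its three axial directions is asserted without proof and is precisely where the difficulty sits: in a drawing whose edges are confined to directions $k\pi/3$, the edge lengths around each bounded face satisfy linear closure relations, so a stretch forced at the insertion site propagates through neighboring faces and around cycles of the planar complex, and you give no argument that the resulting system of constraints is always solvable with positive lengths, no crossings, and no collisions (nor how the three infinite rays are handled). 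Without that, the inductive step can fail exactly in the way the theorem is supposed to exclude.

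For comparison, the paper sidesteps deformation altogether. It works with the \emph{fixed} complex $MV(S)$ (whose combinatorics and edge labels come from the Minkowski--Voronoi tessellation), and draws it in a single sweep: starting from the bottom vertex it maintains an ascending path, and at each recursion step it adds the one face adjacent to the path via an $e_2$-edge, placing the new chain of vertices $w_1,\dots,w_m$ on an explicit segment (through the intersection point of a line in direction $e_2$ from $p_k$ and a line in direction $e_3$ from $p_{k+j}$) strictly inside the unexplored region to the right of the path. Proposition~\ref{ascending} guarantees the direction bookkeeping, and previously placed vertices never move, so no stretching or propagation argument is needed. If you want to salvage your induction on relative minima, you would have to prove a re-embedding lemma of roughly the same strength as the paper's algorithm; as written, the proposal has a genuine gap.
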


\begin{proof}
One of the canonical diagram is explicitly defined by the algorithm described below.
\end{proof}

Let us first introduce some necessary notions and definitions.

We assume that the set $S$ is an axial set in general position. In particular this means
that $S$ contains three points $(N_1,0,0)$, $(0,N_2,0)$, and $(0,0,N_3)$ for some $N_1,N_2, N_3>0$.
Notice that for every two of these three points there is exactly one minimal triple of $S$ containing them. Such triples correspond to
three vertices of $MV(S)$ which we denote by
$v_L$, $v_R$, and $v_B$ (right, left, and bottom vertices) where:
$$
\begin{array}{c}
\{(0,N_2,0), (0,0,N_3)\} \subset v_R,\\
\{(N_1,0,0), (0,0,N_3)\} \subset v_L,\\
\{(N_1,0,0), (0,N_2,0)\} \subset v_B.
\end{array}
$$

Set
$$
e_1=\Big(-\frac{1}{2},\frac{\sqrt{3}}{2}\Big);\qquad
e_2=\Big(\frac{1}{2},\frac{\sqrt{3}}{2}\Big);\qquad
e_3=(1,0).
$$
Recall that all the edges in canonical diagrams have directions $\pm e_1$, $\pm e_2$, or $\pm e_3$.

By a {\it directed path} in the 1-skeleton of the Minkowski-Voronoi complex
we consider the sequence of vertices of $MV(S)$ such that each two consequent vertices are connected by an edge of $MV(S)$.

\begin{definition}
We say that a directed path $p_0p_1\ldots p_n$ in the 1-skeleton of the Minkowski-Voronoi complex $MV(S)$
is {\it ascending} if the following conditions are fulfilled

{$($\it i$)$} $p_0=v_B$;

{$($\it ii$)$} $p_n=v_L$;

{$($\it iii$)$} for every $i=0,\ldots, n-1$ the edge $p_ip_{i+1}$ is of one of the following three directions: $e_1$, $e_2$, or $-e_3$.
\end{definition}

\begin{definition}
Let us consider the union of all compact faces of $MV(S)$ and an ascending path $P$.
The path $P$ divides the union of all compact faces into several connected components.
We say that a compact face, an edge not contained in $P$, or a vertex not contained in $P$ is {\it to the right} (or {\it to the left})
of the path $P$ if it is (or, respectively, it is not) in the same connected component with the point $v_R$.
In the exceptional case when $v_R$ is a vertex of $P$ we say that all compact faces, edges (not in $P$), and vertices (not in $P$)
are to the left of $P$.
\end{definition}

\begin{figure}[t]
$$\includegraphics{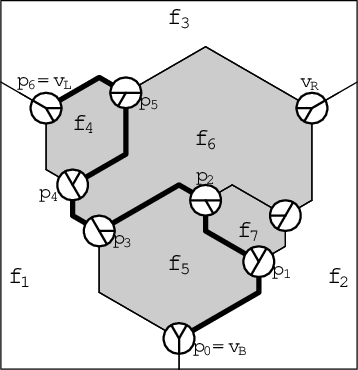}$$
\caption{A Minkowski-Voronoi tesselation with labelled vertices.}\label{faces-6}
\end{figure}

\begin{example}
On Figure~\ref{faces-6} we consider an example of Minkowski
--Voronoi tessellation.
It contains three noncompact faces $f_1$, $f_2$, $f_3$ and four compact faces $f_4$, $f_5$, $f_6$, $f_7$ (all compact faces are in gray color).
The path $P=p_0p_1p_2p_3p_4p_5p_6$ is ascending.
It divides the union of all compact parts into three connected components.
The faces $f_6$ and $f_7$ are to the right of $P$, while the faces $f_4$ and $f_5$ are to the left of $P$.
\end{example}

Let us bring together the main properties of ascending paths.

\begin{proposition}\label{ascending}
Let  $P=p_0p_1\ldots p_n$ be an ascending path.

{$($\it i$)$} Let $\Pi(p_0), \ldots, \Pi(p_n)$ be the sequence of parallelepipeds defined by the triples of points corresponding to $p_i$ $($$i=0,\ldots, n$$)$.
Then, first, the sequence of $y$-sizes $($i.e., maximal $y$-coordinates$)$ of such parallelepipeds is non-increasing. Second, the sequence of
$z$-sizes of the parallelepipeds is non-decreasing.

{$($\it ii$)$} There exists only one ascending path that contains the vertex $v_R$.

{$($\it iii$)$} If the path does not contain $v_R$, then there exists a vertex $w$ outside the path and an integer  $i$ such that
the edge $p_iw$ is an edge of Minkowski-Voronoi complex in direction $e_2$.

{$($\it iv$)$}
All edges starting at some $p_i$ to the right of $P$ are either in direction $e_2$ or in direction $e_3$.

\end{proposition}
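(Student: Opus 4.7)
For part (i), I would read off from Definition~\ref{reflabel} and Figure~\ref{faces-3} the correspondence between the six edge colors and the coordinate swaps of the parallelepiped $\Pi(p_i)$: each edge swaps one point in the minimal triple, enlarging one dimension and shrinking another. The three directions $e_1$, $e_2$, and $-e_3$ are precisely the ones for which the $y$-size does not grow and the $z$-size does not shrink (namely, ``$y$ fixed, $z\uparrow$, $x\downarrow$'', ``$x$ fixed, $z\uparrow$, $y\downarrow$'', and ``$z$ fixed, $x\uparrow$, $y\downarrow$'', respectively). Part~(i) follows immediately.

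For part (ii), the key input is general position: $(0,N_2,0)$ is the only point of $S$ with $y$-coordinate equal to $N_2$, so a vertex of $MV(S)$ has $y$-size equal to $N_2$ if and only if it contains $(0,N_2,0)$; an analogous statement holds for $(0,0,N_3)$ and $z$-size. Both $v_B$ and $v_R$ contain $(0,N_2,0)$, so by~(i) every ascending path through $v_R$ has $y$-size identically $N_2$ from $v_B$ to $v_R$. The only ascending direction preserving $y$-size is $e_1$, so this subpath is an $e_1$-staircase, uniquely determined step by step; symmetrically, the subpath from $v_R$ to $v_L$ consists entirely of $-e_3$-edges. Existence then follows because along the $e_1$-staircase from $v_B$ the $z$-size strictly grows while being bounded above by $N_3$, so the staircase terminates at the unique vertex containing both $(0,N_2,0)$ and $(0,0,N_3)$, which is $v_R$; the second staircase is handled analogously.

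For part (iii), assume $P$ avoids $v_R$ and let $i_0$ be the largest index with $(0,N_2,0)\in p_{i_0}$. Then $p_{i_0}\ne v_R$ forces the $z$-size of $p_{i_0}$ to be strictly less than $N_3$, and the outgoing $P$-edge at $p_{i_0}$ strictly decreases $y$-size, hence lies in direction $e_2$ or $-e_3$. On the other hand, since $(0,N_2,0)\in p_{i_0}$, the edge at $p_{i_0}$ in the $x$-preserving pair must drop $y$-size, so it points in direction $e_2$. If the $P$-edge at $p_{i_0}$ is in direction $-e_3$, this $e_2$-edge leads to a vertex off $P$ and we are done; if instead the $P$-edge is already $e_2$, I would replace $p_{i_0}$ by a nearby $p_j\in P$ whose third edge is a non-$P$ edge in direction $e_2$, located using the fact that $v_R$ sits in a connected component of the compact faces separated from $P$. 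I expect this last bookkeeping to be the main obstacle.

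For part (iv), at each interior $p_i$ the incoming and outgoing $P$-edges together occupy two of the three label pairs $\{e_1,-e_1\}$, $\{e_2,-e_2\}$, $\{e_3,-e_3\}$, and the remaining pair must be occupied by the one other edge of $p_i$. Comparing with the eight admissible vertex labels of Proposition~\ref{types} and taking into account the orientation of $P$ at $p_i$ (which fixes which half-plane is to the right), a short case check shows that whenever this third edge points to the right of $P$, its direction is either $e_2$ or $e_3$. The endpoints $p_0=v_B$ and $p_n=v_L$ are treated analogously, with the incident infinite edges playing the role of the missing $P$-edges.
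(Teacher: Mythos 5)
Your parts (i), (ii) and (iv) follow essentially the same route as the paper (read the monotonicity off the labelling; force the two halves of a path through $v_R$ to use only the $y$-preserving, respectively $z$-preserving, ascending direction; check (iv) locally on the eight vertex labels of Proposition~\ref{types}). One caveat: your dictionary between directions and coordinate changes is the reverse of the one the paper's own arguments force. Following the chain of vertices containing $(N_1,0,0)$ (respectively $(0,N_2,0)$, $(0,0,N_3)$) one sees that $e_1$ is the $x$-preserving direction, $e_2$ the $y$-preserving one and $e_3$ the $z$-preserving one; you have swapped the roles of $e_1$ and $e_2$. For (i) and for the uniqueness argument in (ii) this is harmless relabelling (the monotonicity conclusion is the same either way, and your $v_B$--$v_R$ subpath should consist of $e_2$-edges rather than an ``$e_1$-staircase''), but it is not harmless in (iii), where the direction $e_2$ appears in the statement itself.

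Part (iii) is where the genuine gap lies. The intended argument is: at the last path vertex $p_{i_0}$ containing $(0,N_2,0)$ (which exists and is not $v_R$), the $x$-decreasing edge cannot increase the $y$-size, since that size is already at its global maximum $N_2$; hence this edge is the $y$-preserving edge, i.e.\ has direction $e_2$. Its other endpoint therefore still has $y$-size $N_2$, i.e.\ still contains $(0,N_2,0)$, and by maximality of $i_0$ (together with the monotonicity of $y$-sizes along $P$, which forces any earlier path vertices containing $(0,N_2,0)$ to sit strictly earlier on the boundary chain of the face $\{(0,N_2,0)\}$) that endpoint lies off $P$. Your argument instead hunts for a $y$-decreasing, $x$-preserving edge (what you call $e_2$), and two steps break. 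First, ``the edge at $p_{i_0}$ in the $x$-preserving pair'' need not exist: Proposition~\ref{types} gives exactly one $x$-decreasing, one $y$-decreasing and one $z$-decreasing edge at an interior vertex, and among the eight admissible labels there are ones in which two of these edges preserve the same coordinate and none preserves $x$. Second, even when such an edge exists, its endpoint has $y$-size strictly less than $N_2$, so the maximality of $i_0$ gives you no handle to conclude that the endpoint avoids $P$ --- which is precisely the step you defer as ``bookkeeping'' and never carry out. So (iii) is not established; the repair is to argue with the $y$-preserving edge as above, where membership of $(0,N_2,0)$ in the new triple is what puts its endpoint off the path.
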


\begin{proof}
{$($\it i$)$}. The first item follows directly from the definition of labeling (Definition~\ref{reflabel}).

\vspace{2mm}

{\noindent
{$($\it ii$)$}. First of all, let us construct the following directed path in the 1-skeleton of $MV(S)$.
This path consists of two parts.
In the first part of the path we collect all vertices  (without repetitions) that include the point $(0,N_2,0)$; all such vertices are
consequently joined by the edge in direction $e_2$ starting with $v_B$ ending with $v_R$.
The second part of the path consists of vertices (passed once) that include the point $(0,0,N_3)$; all such vertices are
consequently joined by the edge in direction $-e_3$ starting with $v_R$ ending with $v_L$.
By construction this path is ascending.
}

Suppose now we have some other ascending path through $v_R$.
Let us first consider the part of the path between $v_B$ and $v_R$.
The $y$-sizes of $\Pi(v_B)$ and $\Pi(v_R)$ are equal to $N_2$, and hence
by Proposition~\ref{ascending}(i) all the $y$-sizes of all the parallelepipeds within the path between $v_B$ and $v_R$
are equal to $N_2$. For the edges with directions $e_1$ and $-e_3$ the $y$-size of the parallelepiped strictly increases,
and hence there are none of them in the part of the path between $v_B$ and $v_R$. Hence all the edges are in direction $e_2$,
and, therefore, this part of the path coincides with the first part of the ascending path constructed before.

The second part of the path is between $v_R$ and $v_L$. Here the $z$-sizes of all the corresponding parallelepipeds are equal to $N_3$
(since by Proposition~\ref{ascending}(i) the sequence of $z$-sizes is non-decreasing and since all $z$-sizes are bounded by $N_3$ from above).
Therefore, this part of the path does not contain the directions $e_1$ and $e_2$ that increase $z$-sizes.
Hence all the directions are $-e_3$, and this part of the path coincides with the second part of the considered before path.

Therefore, there exists and unique an ascending path through $v_R$.

\vspace{2mm}

{\noindent
{$($\it iii$)$}. Consider an ascending path that does not contain $v_R$. Let us find the last vertex that contains
$(0,N_2,0)$. Since it is not $v_R$, there is an edge in direction $e_2$ at this point.
This edge does not change the  $y$-size of the parallelepiped and hence it is not in the ascending path.
}

\vspace{2mm}

{\noindent
{$($\it iv$)$}. The properties of being from the right or being from the left are detected from the Minkowski-Voronoi tessellation for the corresponding
set. Hence it is enough to prove the statement locally, considering only eight possible labels of the vertices shown in the figure of Proposition~\ref{types}.
It is clear that all possible ascending paths trough all such vertices either do not contain edges directed to the right, or the directions
are either $e_2$ or $e_3$.
}
\end{proof}

Now we outline the main steps of the algorithm.

\begin{center}

{\bf Algorithm to construct a canonical diagram of the Minkowski-Voronoi complex}

\end{center}

\vspace{2mm}

{
\noindent
{\bf Input data.} We are given by a finite strict subset $S$ of $\r^3_{\ge 0}$ in general position
containing three points $(N_1,0,0)$, $(0,N_2,0)$, and $(0,0,N_3)$.
}

\vspace{2mm}

{
\noindent
{\bf Goal of the algorithm.}
To draw the canonical diagram for the set $S$.
}

\vspace{2mm}

{
\noindent
{\bf Preliminary Step i.} First of all we construct the Minkowski-Voronoi complex $MV(S)$.
This is done by comparing coordinates of the points of the set $S$.
In the output of this step we have:
\begin{itemize}
\item the list of all Voronoi relative minima $v_1,\ldots, v_{n(v)}$;
\item the list of all edges $e_1,\ldots, e_{n(e)}$ of $MV(S)$;
\item the list of all faces $f_1,\ldots, f_{n(f)}$ of $MV(S)$;
\item the adjacency table for the complex $MV(S)$;
\item all vertices and edges of the Minkowski polyhedron.
\end{itemize}
}

{
\noindent
{\bf Preliminary Step ii.} After the previous step is completed we have all the data to construct the Minkowski-Voronoi tessellation.
In order to do this we apply the algorithm of Definition~\ref{Minkowski-Voronoi tessellation} to the Minkowski polyhedron.
In the output of this step we have:
\begin{itemize}
\item the directions of all the edges;
\item for every ascending path $P$ and for every face $f_i$ we know whether $f_i$ is to the left of the path $P$ or not.
\end{itemize}
}

{
\noindent
{\bf Step 1.} We start with the bottom vertex $v_P=p_0$.
We assign to $p_0$ the coordinates: $(0,0)$.
Further, we construct the ascending path of vertices $p_0,p_1, \ldots, p_n$
of all vertices whose triples include the point $(N_1,0,0)$ of the set $S$.
The corresponding coordinates are assigned as follows
$$
p_i=\frac{i}{n}e_1.
$$
}

Denote by $F$ the set of compact faces for which we have already constructed the coordinates.
At this step $F$ is empty. In addition we choose the path
$P=p_0\ldots p_n$. It is clear that the path $P$ is ascending.

\vspace{2mm}

{\noindent
{\bf ШRecursion Step.} At each step we start with the set $F$ of all constructed faces and an ascending path
$P=p_0\ldots p_n$, which separates the union of the constructed faces and all the other faces. All faces of $F$
are the faces to the left of $P$ (like faces $f_5$ and $f_6$ on Figure~\ref{faces-6}).
}

If $P$ contains the vertex $v_R$, then the algorithm terminates, by Proposition~\ref{ascending}(ii) we have
constructed a canonical diagram for $MV(S)$.

Suppose now that $P$ does not contain $v_R$. Then by Proposition~\ref{ascending}(iii) there exists an edge in direction $e_2$
with an endpoint at some $p_i$.
Let $p_k$ be such vertex with the greatest possible $k$, and denote by $w_1$ the other endpoint of the edge starting at $p_k$ with direction $e_2$.
Denote by $f$ the face adjacent to $p_kw_1$ from the left side. We have not yet constructed the face $f$ since it is to the right of $P$.
Suppose that $p_k,\ldots, p_{k+j}$ are vertices of $f$ and $p_{k+j+1}$ is not a vertex of $f$.

By Proposition~\ref{types} there exists an edge leaving the vertex $p_k$ either in direction $e_1$ or in direction $-e_3$.
In both cases this edge is adjacent to the same face as the edge $p_kw_1$, and in both cases this is the only choice for the edge $p_kp_{k+1}$.
Therefore, $j\ge 1$, or in other words the edge $p_kp_{k+1}$ is the edge of $f$.

We enumerate the vertices of $f$ clockwise as follows
$$
p_k,\ldots, p_{k+j},w_m,\ldots,w_1.
$$
The edge $p_{k+j}w_m$ is to the right of $P$, hence by Proposition~\ref{ascending}(iv) it is either in direction $e_2$ or in direction $e_3$.
By the assumption the vertices $p_{k+1}, \ldots, p_n$ do not have adjacent edges in direction $e_2$. Hence the direction of $p_{k+j}w_m$ is $e_3$.
Now Theorem~\ref{face-shape} (on general face structure) implies that all vertices $w_1, \ldots, w_m$ are in a line with direction $e_1$.
Since $f$ is to the right of $P$, all vertices $w_1, \ldots, w_m$ are to the right of $P$ as well.

Let us assign the coordinates to the vertices $w_1,\ldots, w_m$.
Let $u$ be the intersection point of the line containing $p_k$ and parallel to $e_2$ and the line containing $p_{k+j}$ and parallel to $e_3$.
In case when $m=1$ we assign $w_1=u$.
Suppose $m>1$. Consider two subsegments of $uv_k$ and $uv_l$ with endpoints $u$ respectively and with length
$$
\frac{\min(|uv_k|,|uv_l|)}{2}.
$$
Denote the other two endpoints of such segments by $u_k$ and $u_l$ respectively.
Subdivide $u_ku_l$ into $m-1$ segments of the same length, and assign to $w_1, \ldots, w_m$  the corresponding consequent endpoints
of the subdivision.
This gives the expressions for the assigned coordinates of all new points $w_1, \ldots, w_m$.

The recursion step is completed. We go to the next recursion step with the new data $F'=F\cup \{f\}$ and
$$
P'=(p_0,\ldots, p_k, w_1,\ldots, w_m, p_{k+j},\ldots, p_n).
$$
First, the difference between $P$ and $P'$ is that the face $f$ is to the right of $P$ and to the left of $P'$. Hence $F'$ is the set of all faces
that are to the left of $P'$ (it is the set of all constructed faces).
Second, since all the added edges in the path follow directions $e_2$, $e_1$, and $-e_3$, the path $P'$ is ascending.
Hence we are in correct settings for the next step.

\begin{remark}
The number of recursion steps coincides with the number of relative minima, which is the number of faces in our diagram.
Hence the algorithm stops in a finite time.
\end{remark}

\begin{example}
Consider the example of the Minkowski-Voronoi complex on Figure~\ref{faces-2} from the left
and let us show how to obtain its canonical diagram using the above algorithm.

\begin{figure}[t]
$$
\begin{array}{l}
\includegraphics{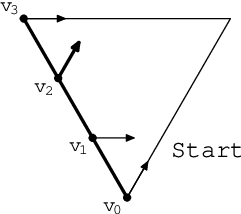}\quad \includegraphics{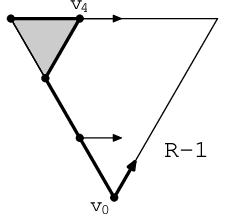}\quad \includegraphics{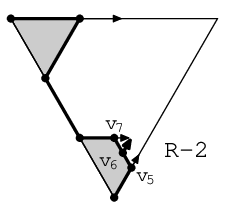}\\
\includegraphics{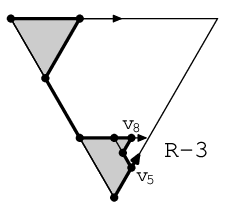}\quad \includegraphics{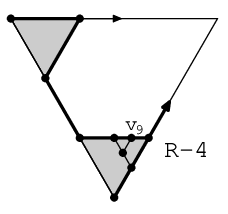}\quad \includegraphics{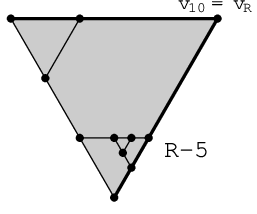}
\end{array}
$$
\caption{Six iterations of the algorithm for a particular example.}\label{alg-ex}
\end{figure}

Assume that Preliminary Steps~i and~ii are completed. So we have the Minkowski-Voronoi tessellation with labeled vertices as on Figure~\ref{faces-6}. Recall that
$$
e_1=\Big(-\frac{1}{2},\frac{\sqrt{3}}{2}\Big);\qquad
e_2=\Big(\frac{1}{2},\frac{\sqrt{3}}{2}\Big);\qquad
e_3=(1,0).
$$

\vspace{2mm}

{\noindent
{\bf Step 1 (Start):}
First of all we find the coordinates of the vertices for the ascending path for which all the faces are to the right.
We have 4 vertices here. We associate the following coordinates for them
$$
v_0=(0,0); \qquad v_1=\frac{1}{3}e_1; \qquad v_2=\frac{2}{3}e_1; \qquad v_1=e_1.
$$
So we get the points shown on Figure~\ref{alg-ex}(Start).
}

\vspace{2mm}

{\noindent
{\bf Recursive Step 1 (R-1):}  There are two vertices that have an edge to the right in direction $e_2$, see Figure~\ref{alg-ex}(Start).
They are $v_0$ and $v_2$.
According to the algorithm we take the one with the greater index, i.e., $v_2$.
We add the new vertex
$$
v_4=e_1+\frac{1}{3}e_3.
$$
We arrive at Figure~\ref{alg-ex}(R-1).
}

\vspace{2mm}

{\noindent
{\bf Recursive Step 2 (R-2):}
There is only one vector that has an edge to the right in direction $e_2$, see Figure~\ref{alg-ex}(R-1),
it is $v_0$.
So we add the following three vertices
$$
v_5=\frac{1}{6}e_2; \qquad v_6=\frac{1}{6}e_2+\frac{1}{12}e_1; \qquad v_7=\frac{1}{6}e_2+\frac{1}{6}e_1; \qquad
$$
and construct the face $v_0v_5v_6v_7v_1$ of the diagram, see Figure~\ref{alg-ex}(R-2).
}

\vspace{2mm}

{\noindent
{\bf Recursive Step 3 (R-3):}
Here we again have two vertices with edges to the right in direction $e_2$, see Figure~\ref{alg-ex}(R-2),
they are $v_6$ and $v_7$. We choose the one with the greater index, which is $v_7$.
So we add one triangular face with a new vertex
$$
v_8=\frac{1}{3}e_2-\frac{1}{12}e_3,
$$
see Figure~\ref{alg-ex}(R-3).
}

\vspace{2mm}

{\noindent
{\bf Recursive Step 4 (R-4):}
Now we add the vertex
$$
v_9=\frac{1}{3} e_2.
$$
see Figure~\ref{alg-ex}(R-4).
}

\vspace{2mm}

{\noindent
{\bf Recursive Step 5 (R-5):}
Finally we add the last vertex
$$
v_{10}=v_R=e_2.
$$
The algorithm terminates here.
We have constructed the whole canonical diagram (see Figure~\ref{alg-ex}(R-5)).
}
\end{example}


\section{Theorem on stabilization of Minkowski-Voronoi complex}\label{stab}

In this section we formulate and prove the main result of the paper.
In Subsection~\ref{MV-lat} we briefly discuss how to adapt all the definition to the case of lattices.
Further in Subsection~\ref{geo_code} we describe a geometric code for pairs and triples of integers,
we further use this code in the formulation of our main result.
We state the stabilization theorem in Subsection~\ref{MainResultFormulation} and further solve it
in Subsections~\ref{Classification of relative minima}, \ref{Asymptotic comparison of coordinate functions},
and~\ref{Conclusion of the proof}.

\subsection{Minkowski-Voronoi complexes for lattices}\label{MV-lat}

Consider a lattice $\Gamma\in \r^n$ defined as follows:
$$
\Gamma=\Big\{\sum_{i=1}^n m_ig_i\hbox{ }\Big|\hbox{ } m_i\in\mathbb{Z},i=1,\ldots, n\Big\},
$$
where $g_1, \ldots, g_n$ are linearly independent vectors in $\r^n$.
Define
$$
|\Gamma|=\{(|x_1|,\ldots,|x_n|)\mid (x_1,\ldots, x_n)\in\Gamma\}\setminus\{(0,\ldots,0)\}.
$$

\begin{definition}
Consider an arbitrary full rank lattice $\Gamma$ in $\r^n$.
Let the set of all Voronoi relative minima of $|\Gamma|$ (i.e., the set $\vrm(|\Gamma|)$
is a finite axial set.
Then the complex $MV(|\Gamma|)$ be well defined. It is called
the {\it Minkowski-Voronoi complex} for $\Gamma$, we denote it by $MV(\Gamma)$.
\end{definition}

Recall a general definition of 1-rank lattices.

\begin{definition}\label{1-rank}
Let $a$, $b$, and $N$ be arbitrary positive integers.
The lattice
$$
  \{(m_1(1,a,b)+m_2(0,N,0)+m_3(0,0,N)\mid m_1,m_2,m_3\in\mathbb{Z}\}
$$
is said to be the {\it $1$-rank lattice}. We denote it by $\Gamma(a,b,N)$.
\end{definition}


All local minima (except the ones on the coordinate axes) are contained in the cube
$[-N/2,N/2]$ and, therefore, they form a finite set. In fact, a stronger statement holds.

\begin{proposition}\label{GeneralLattice}
Let $a$, $b$, and $N$ be arbitrary positive integers such that both $a$ and $b$ are relatively prime with $N$.
Then the set $\vrm(|\Gamma(a,b,N)|)$ is a finite axial set in general position.
\qed
\end{proposition}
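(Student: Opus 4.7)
The plan is to verify each of the four conditions --- finiteness, axiality, and the two general-position clauses --- by exploiting two structural properties of $\Gamma = \Gamma(a,b,N)$. First, $N\z^3 \subseteq \Gamma$, because $(N,0,0) = N(1,a,b) - a(0,N,0) - b(0,0,N)$ lies in $\Gamma$ and the other two standard multiples $(0,N,0), (0,0,N)$ are in $\Gamma$ directly. Second, the coprimality $\gcd(a,N) = \gcd(b,N) = 1$ forces rigid divisibility modulo $N$ on the coordinates of any lattice vector.

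For finiteness and axiality I would start from the parametrization $v = (m_1, m_1 a + m_2 N, m_1 b + m_3 N)$. If two of the three coordinates vanish, coprimality forces $N \mid m_1$, so the smallest strictly positive points on the three axes are $(N,0,0)$, $(0,N,0)$, $(0,0,N)$, each of which is immediately a Voronoi relative minimum. Finiteness follows from the bound $|X|, |Y|, |Z| \le N/2$ on the coordinates of any non-axial relative minimum: if $|X| > N/2$, then $v - \sgn(X)\cdot (N,0,0)$ lies in $\Gamma$, and its absolute value has strictly smaller $x$-coordinate while preserving $|Y|$ and $|Z|$, placing a distinct point of $|\Gamma|$ inside $\Pi(\gamma)$ and contradicting Voronoi minimality.

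Next, general-position condition (i): I would describe all points of $|\Gamma|$ lying in the plane $\{z = 0\}$. The equation $m_1 b + m_3 N = 0$ combined with $\gcd(b,N) = 1$ forces $N \mid m_1$, so such points are of the form $N(|q|, |qa+m_2|, 0)$ and exhaust $(N\z)^2 \setminus \{(0,0)\}$. The only Voronoi relative minima among them are the two axis points $(N,0,0)$ and $(0,N,0)$, since any other candidate $(kN, lN, 0)$ with $k, l \ge 1$ contains one of these in its parallelepiped. The same reasoning handles the other two coordinate planes.

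General-position condition (ii) is the main conceptual step. Suppose for contradiction that two distinct minima $\gamma_1, \gamma_2 \in \vrm(|\Gamma|)$ share a positive $z$-coordinate $c$. Choose lifts $v_1, v_2 \in \Gamma$; after negating $v_2$ if necessary, I may assume $Z_1 = Z_2 = c$, so that $v_1 - v_2$ lies in $\Gamma \cap \{z = 0\}$. By the previous paragraph both $X_1 - X_2$ and $Y_1 - Y_2$ are multiples of $N$. Combined with $|X_i|, |Y_i| \le N/2$, the triangle inequality gives $|X_1 - X_2| \le N$, so $X_1 - X_2 \in \{-N, 0, N\}$; the case $X_1 - X_2 = \pm N$ can only occur with $|X_1| = |X_2| = N/2$ and $X_1 = -X_2$, and in either case $|X_1| = |X_2|$. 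The same dichotomy for $Y$ forces $|Y_1| = |Y_2|$, so $\gamma_1 = \gamma_2$, a contradiction. Axial exceptions ($c \in \{0, N\}$) are handled by (i) together with the fact that non-axial minima satisfy $|Z| \le N/2 < N$. The potential obstacle I foresee is verifying that the reduction $v \mapsto v - \sgn(X)(N,0,0)$ in the second paragraph always produces a genuinely distinct point of $|\Gamma|$ when $|X| > N/2$; this is the only place where the exact form of the parametrization is essential, and handling the borderline case $|X| = N/2$ cleanly is what makes the triangle-inequality analysis in the last paragraph work without assuming $N$ odd.
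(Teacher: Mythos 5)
Your argument is correct, and it follows exactly the route the paper indicates: the paper states this proposition with only the preceding remark that all non-axial minima lie in the cube $[-N/2,N/2]^3$, and your reduction-by-$(N,0,0)$ bound, the observation $N\z^3\subseteq\Gamma$, and the coprimality-forced congruences mod $N$ are the natural elaboration of that sketch, covering finiteness, axiality, and both general-position clauses (including the borderline $|X|=N/2$ case). No gaps beyond the tacit ``by symmetry'' for the planes $x=c$ and $y=c$ in clause (ii), which is immediate since $\gcd(a,N)=\gcd(b,N)=1$ makes the three coordinate directions play interchangeable roles.
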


As a consequence the Minkowski-Voronoi complex $MV(|\Gamma(a,b,N)|)$ is defined for all triples $(a,b,N)$ with
$a$ and $b$ being relatively prime with $N$.


\subsection{Geometric code for pairs and triples of integers}\label{geo_code}

\subsubsection{Pairs of integers}
Suppose $(a,N)$ be a pair of integers satisfying $a<N$.
As we have already mentioned in the introduction the lengths of ordinary continued fractions for
$$
\frac{N}{a},
\quad
\frac{N+a}{a},
\quad
\frac{N+2a}{a},
\quad
\frac{N+3a}{a},
\quad
\ldots
$$
coincide.

\vspace{2mm}

In order to approach the three-dimensional case we reformulate this statement as follows.

For a pair of integers $(a,N)$ consider the following three integers:
$$
\alpha= N \modd a, \quad a, \quad t=\lfloor N/a\rfloor.
$$
We say that $(\alpha, a, t)$ is the {\it geometric code} of $(a,N)$, where
the pair $(\alpha, a)$ is its {\it combinatorial part} and $t$ is a {\it parameter}.

In the new settings we have:
\\
{
\it The lengths of continued fractions for $(\alpha,a,t)$ for a fixed $(\alpha, a)$  and all $t\ge 1$ are the same.
}

\vspace{2mm}

\subsubsection{Triples of integers}
It is interesting to observe that there is a natural extension of the geometric code to the case of triples of integers.

\begin{definition}\label{Definition_code}
Let $(a,b,N)$ be a triple of positive integers, where $b\ge 2$ and $N$ is not divisible by  $b$.
The {\it geometric code} for a triple of nonnegative integers $(a,b,N)$ is defined as $(\alpha,\beta,\gamma,a,t,u)$ where
$$
\begin{array}{l}
\alpha = N \modd b,\\
\beta = b \modd (\alpha a),\\
\displaystyle
\gamma =\lfloor N/b \rfloor \modd a,
\end{array}
\qquad
\begin{array}{l}
\hbox{ }\\
\displaystyle
t= \big\lfloor b/(\alpha a)\big \rfloor,\\
\displaystyle
u=\big\lfloor \lfloor N/b\rfloor /a \big\rfloor.
\end{array}
$$
We will consider $(\alpha,\beta,\gamma,a)$ as {\it combinatorial part} and $(t,u)$ as {\it parametric part}.
\end{definition}

Notice that the natural bounds for the entries are:
\begin{equation}\label{GeometricCodeConditions}
\begin{array}{c}
0< \alpha < b;\\
0\le \beta < \alpha a\\
0\le \gamma <a;\\
t,u \ge 0.
\end{array}
\end{equation}

Additional conditions
\begin{equation}\label{GeometricCodeConditions2}
\begin{array}{l}
\gcd(\alpha,\beta)=1,
\\
\gcd(a,\alpha+\beta\gamma)=1
\end{array}
\end{equation}
are fulfilled if and only if $N$ is relatively prime with $a$ and $b$
in the corresponding triple $(a,b,N)$.

\begin{proposition}
There is a one-to-one correspondence between the set of triples $(a,b,N)$ of positive integers where $b\ge 2$ and $N$ is not divisible by  $b$,
and the set of all 6-tuples $(\alpha,\beta,\gamma,a,t,u)$ satisfying~$($\ref{GeometricCodeConditions}$)$.
\end{proposition}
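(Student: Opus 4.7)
The plan is to exhibit an explicit inverse $\Psi$ to the encoding map $\Phi\colon (a,b,N)\mapsto(\alpha,\beta,\gamma,a,t,u)$ of Definition~\ref{Definition_code}, and verify that $\Psi\circ\Phi$ and $\Phi\circ\Psi$ are both the identity. Given a 6-tuple $(\alpha,\beta,\gamma,a,t,u)$ satisfying~(\ref{GeometricCodeConditions}), I set
\begin{align*}
b &= \beta+t\alpha a,\\
N &= \alpha+b(\gamma+ua),
\end{align*}
and declare $\Psi(\alpha,\beta,\gamma,a,t,u)=(a,b,N)$.

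First I would verify that $(a,b,N)$ lies in the target domain. Since $\alpha\ge 1$ and $\alpha<b$ (the first of the listed bounds, read as a constraint on the tuple via $b=\beta+t\alpha a$), we have $b\ge 2$; since $N\equiv\alpha\not\equiv 0\pmod b$, we also have $b\nmid N$. So $\Psi$ indeed lands in the set of admissible triples.

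Next I would check $\Phi\circ\Psi=\mathrm{id}$. This reduces to three applications of the uniqueness of Euclidean division. The identity $N=b\cdot(\gamma+ua)+\alpha$ with $0\le\alpha<b$ forces $\alpha=N\bmod b$ and $\gamma+ua=\lfloor N/b\rfloor$; the identity $\gamma+ua=a\cdot u+\gamma$ with $0\le\gamma<a$ forces $u=\lfloor\lfloor N/b\rfloor/a\rfloor$ and $\gamma=\lfloor N/b\rfloor\bmod a$; the identity $b=(\alpha a)\cdot t+\beta$ with $0\le\beta<\alpha a$ forces $t=\lfloor b/(\alpha a)\rfloor$ and $\beta=b\bmod(\alpha a)$. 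These are precisely the quantities produced by $\Phi$. The converse $\Psi\circ\Phi=\mathrm{id}$ is immediate from Definition~\ref{Definition_code}, which expresses any admissible $(a,b,N)$ in exactly the form $b=\beta+t\alpha a$ and $N=\alpha+b(\gamma+ua)$ that $\Psi$ uses for reconstruction.

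There is no genuine obstacle, only a notational subtlety: the inequality $\alpha<b$ from~(\ref{GeometricCodeConditions}) involves $b$ rather than the tuple entries directly, and must be interpreted as the condition $\alpha<\beta+t\alpha a$ on the 6-tuple. This is precisely the hypothesis needed (and nothing more) to guarantee both $b\ge 2$ and the validity of the Euclidean division $N=b(\gamma+ua)+\alpha$; the remaining listed inequalities $0\le\beta<\alpha a$, $0\le\gamma<a$, $t,u\ge 0$ simply record the output ranges of the two further Euclidean divisions inside Definition~\ref{Definition_code}, so the bijection correspondence of the proposition follows at once.
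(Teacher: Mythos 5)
Your proposal is correct and follows essentially the same route as the paper, whose entire proof consists of exhibiting exactly your inverse map $b=\alpha a t+\beta$, $N=b(au+\gamma)+\alpha$. You simply spell out the verification (uniqueness of Euclidean division, admissibility of the reconstructed triple, and the reading of $\alpha<b$ as $\alpha<\alpha a t+\beta$) that the paper leaves implicit.
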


\begin{proof}
The inverse map is given by:
$$
\begin{array}{l}
a=a\\
b=\alpha at+ \beta;\\
N= (\alpha at+\beta)(a u+\gamma)+\alpha.\\
\end{array}
$$
\end{proof}

\begin{corollary}\label{splitting}
The set of triples $(a,b,N)$ where $b\ge 2$ and $N$ is not divisible by $b$
is splitted into two-parametric families of triples by combinatorial types.
\end{corollary}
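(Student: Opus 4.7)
The plan is to deduce this corollary directly from the preceding proposition, which establishes that the map $(a,b,N) \mapsto (\alpha,\beta,\gamma,a,t,u)$ is a bijection onto the set of 6-tuples satisfying the inequalities in~(\ref{GeometricCodeConditions}). Once this bijection is in hand, the corollary is essentially a reformulation, so the whole argument is organizational rather than computational.

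First, I would introduce on the set of admissible triples the equivalence relation: $(a,b,N) \sim (a',b',N')$ if and only if their geometric codes share the same combinatorial part $(\alpha,\beta,\gamma,a)$, as specified in Definition~\ref{Definition_code}. This is clearly an equivalence relation because it is defined through equality of a tuple-valued invariant. Grouping triples by this relation automatically partitions the full set into equivalence classes.

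Next, I would identify each equivalence class explicitly. By the proposition, fixing $(\alpha,\beta,\gamma,a)$ (subject to the constraints $0<\alpha<b$, $0\le\beta<\alpha a$, $0\le\gamma<a$ inherited from~(\ref{GeometricCodeConditions})) and letting $(t,u)$ range over all pairs of nonnegative integers gives a bijection between $\{(t,u) : t,u \ge 0\} \cong \mathbb{Z}_{\ge 0}^2$ and the set of triples in that class, via the inverse formulas
\[
a = a, \qquad b = \alpha a t + \beta, \qquad N = (\alpha a t + \beta)(au + \gamma) + \alpha.
\]
Thus each equivalence class is indeed a two-parametric family indexed by $(t,u)$.

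The only point to double-check is that as $(t,u)$ vary, the resulting triples all satisfy the standing constraints $b \ge 2$ and $b \nmid N$: the first follows from $b = \alpha a t + \beta$ together with $\alpha \ge 1$ and the range of $\beta$ for admissible combinatorial parts, and the second from $N \equiv \alpha \pmod{b}$ with $0 < \alpha < b$. There is no real obstacle here; the substantive content is entirely packaged inside the preceding proposition, so the corollary follows by unpacking the definitions and observing that the fibers of the forgetful map $(\alpha,\beta,\gamma,a,t,u) \mapsto (\alpha,\beta,\gamma,a)$ are exactly the promised two-parametric families.
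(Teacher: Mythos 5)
Your argument is correct and is essentially the paper's own: the corollary is stated there as an immediate consequence of the preceding bijection proposition, with each combinatorial part $(\alpha,\beta,\gamma,a)$ giving a fiber parametrized by $(t,u)\in\z_{\ge 0}^2$ via the inverse formulas $b=\alpha a t+\beta$, $N=(\alpha a t+\beta)(au+\gamma)+\alpha$. Your extra verification that the constraints $b\ge 2$ and $b\nmid N$ persist across the family is a harmless (and reasonable) addition, but it adds nothing beyond what the proposition already packages.
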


We use such two-parametric families in the next subsection.

\subsection{Main result and its proof outline}\label{MainResultFormulation}

Now we are ready to formulate the Minkowski-Voronoi complex stabilization theorem.


\begin{theorem}\label{stabilization}{\bf (Minkowski-Voronoi complex stabilization.)}
Consider a combinatorial part $(\alpha,\beta, \gamma, a)$
satisfying both conditions~$($\ref{GeometricCodeConditions}$)$
and~$($\ref{GeometricCodeConditions2}$)$. Let $u$ and $t$ be nonnegative
integer parameters. Set
$$
\begin{array}{l}
b(t)=\alpha at+ \beta;\\
N(t,u)=b(t) (a u+\gamma)+\alpha = (\alpha at+\beta)(a u+\gamma)+\alpha.\\
\end{array}
$$
Then the following statements hold.

{
\noindent
{\bf $(t,u)$-stabilization:}
there exist $t_0$ and $u_0$ such that for any $t>t_0$ and $u>u_0$ it holds
$$
MV\Big(\Gamma\big(a,b(t),N(t,u)\big)\Big)\approx MV\Big(\Gamma\big(a,b(t_0),N(t_0,u_0)\big)\Big).
$$
$($By $\approx$ we denote combinatorial equivalence relation for two complexes.$)$
}

{
\noindent
{\bf $t$-stabilization:}
for every $u\ge 0$ there exists $t_0$ such that for every $t>t_0$ we have
$$
MV\Big(\Gamma\big(a,b(t),N(t,u)\big)\Big)\approx MV\Big(\Gamma\big(a,b(t_0),N(t_0,u)\big)\Big).
$$
}

{
\noindent
{\bf $u$-stabilization:}
for every $t\ge 0$ there exists $u_0$ such that for every $u>u_0$ we have
$$
MV\Big(\Gamma\big(a,b(t),N(t,u)\big)\Big)\approx MV\Big(\Gamma\big(a,b(t),N(t,u_0)\big)\Big).
$$
}
\end{theorem}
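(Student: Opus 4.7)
The plan is to parametrize $\vrm(|\Gamma(a,b(t),N(t,u))|)$ uniformly in the parameters $t$ and $u$, so that the combinatorial equivalence of complexes reduces to the fact that a polynomial in two variables has eventually constant sign on a quadrant (or on a half-line). This matches the announced three-subsection decomposition: first classify the Voronoi relative minima as polynomial families in $(t,u)$ (Subsection~\ref{Classification of relative minima}), then show the relevant coordinate comparisons stabilize in sign (Subsection~\ref{Asymptotic comparison of coordinate functions}), and finally deduce stabilization of $\vrm$, of the minimal subsets $\mathfrak{M}_k$, and of the adjacency structure (Subsection~\ref{Conclusion of the proof}).

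For the first stage, I would parametrize lattice points: every element of $\Gamma(a,b,N)$ has the form $(m_1, m_1 a + m_2 N, m_1 b + m_3 N)$, so every element of $|\Gamma|$ whose coordinates are bounded by $N/2$ (the only candidates for Voronoi relative minima, by Proposition~\ref{GeneralLattice}) is determined by an integer $m_1$ together with a choice of sign in each of the last two coordinates. Substituting $b=b(t)=\alpha a t+\beta$ and $N=N(t,u)=b(t)(au+\gamma)+\alpha$, each coordinate becomes a polynomial in $(t,u)$ of low degree, one polynomial for each choice of combinatorial data (residues of $m_1$ modulo $a$ and modulo $b(t)$, together with the sign pattern). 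A careful enumeration gives a finite list of polynomial coordinate-triples $\bigl(P_x^{(i)}(t,u),P_y^{(i)}(t,u),P_z^{(i)}(t,u)\bigr)$ that exhausts all potential relative minima for sufficiently large $(t,u)$.

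For the second and third stages, the defining condition that an element $\gamma\in S$ be a Voronoi relative minimum, and the conditions defining the minimal $k$-subsets and their inclusions $F_1\subset F_2$, are finite conjunctions of strict coordinate inequalities among the $P^{(i)}$. Each such inequality is a polynomial inequality in $(t,u)$, hence its truth value stabilizes on the quadrant $\{t>t_0,\,u>u_0\}$, on every horizontal half-line $\{t>t_0,\,u=u_*\}$, and on every vertical half-line $\{t=t_*,\,u>u_0\}$. Applying this to all inequalities simultaneously yields the three stabilization statements: $(t,u)$-stabilization, $t$-stabilization, and $u$-stabilization, respectively.

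The main obstacle is the careful bookkeeping in the classification stage. One must ensure that no polynomial triple in the finite list fails to represent an actual lattice point for some large $(t,u)$, and conversely that every Voronoi relative minimum for large $(t,u)$ is captured by the list; this requires fixing the residue classes of the integer parameters $m_1,m_2,m_3$ in a way compatible with the substitutions for $b(t)$ and $N(t,u)$, and checking that each polynomial candidate actually achieves the coordinate range $[0,N/2]$ for large $(t,u)$. Once this exhaustive parametrization is in place, the second and third stages are direct applications of the sign-stabilization principle for polynomials, and the three stabilization statements follow without further work.
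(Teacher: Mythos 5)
Your overall plan mirrors the paper's three-stage strategy, but two essential points are missing, and one of them is an actual error. First, the crux of your second and third stages is the ``sign-stabilization principle'': that a polynomial in $(t,u)$ has eventually constant sign on a quadrant $\{t>t_0,\ u>u_0\}$. This is false in general (take $P(t,u)=t-u$, or $C_1u-C_2t$), so you cannot conclude $(t,u)$-stabilization from polynomiality alone. What makes the argument work in the paper is a much more specific structural fact (Proposition~\ref{ListCoords}): after reduction, the first two coordinates of every candidate minimum have the form $|A N+Cu+D|_N$ and the third the form $|AN+Ct+D|_N$, with constants depending only on $(\alpha,\beta,\gamma,a)$; since the complex is determined by comparisons of \emph{like} coordinates, every relevant difference is of the form $AN+Cu+D$ or $AN+Bt+D$ --- never mixing linear terms in $t$ and $u$ --- and there the sign does stabilize, because $N$ is bilinear and dominant when $A\ne 0$, and the $A=0$ case is a one-variable linear function (Lemma~\ref{ModComparisonLemma}). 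Moreover the reduction $|\cdot|_N$ is itself parameter-dependent and must first be shown to take the canonical affine form for large parameters (Proposition~\ref{ModuleReduction}); your proposal treats $|\cdot|_N$ as if it were already polynomial data.

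Second, the classification stage, which you describe as ``careful bookkeeping,'' is precisely where the paper does its real work, and your sketch does not deliver it: indexing candidates by the residue of $m_1$ modulo $b(t)$ gives a set of combinatorial data whose size grows with $t$, so it does not produce a finite list uniform in the parameters. The paper instead partitions the range of the first coordinate into $3a$ subintervals $I_{k,1},I_{k,2},I_{k,3}$ and proves (Structural Proposition~\ref{StructuralProposition}) that each contributes a bounded number of candidates with explicit first coordinates --- for the third type this requires computing the modular inverse of $b$ via continuants --- yielding a list $\Xi_{\alpha,\beta,\gamma,a}(t,u)$ of exactly $\alpha+5a+3$ points whose coordinates have the special affine form described above. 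There is also a boundary case $t=0$ (where $b=\beta$ is constant) that needs a modified interval decomposition. Without an argument producing such a uniform, structurally controlled list, both the finiteness of your enumeration and the stabilization of the comparisons remain unproved, so the proposal has a genuine gap at its two central steps.
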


\begin{remark}
Notice that conditions~(\ref{GeometricCodeConditions}) and~(\ref{GeometricCodeConditions2})
are equivalent to the fact that $a$ and $b$ are relatively prime with $N$.
Hence the corresponding rank-1 lattice $\Gamma(a,b,N)$ satisfies Proposition~\ref{GeneralLattice}.
\end{remark}

\begin{remark}\label{finitely_many}
Theorem~\ref{stabilization} implies that the number of distinct combinatorial types
of the Minkowski-Voronoi complexes for a given combinatorial part $(\alpha,\beta,\gamma,a)$ satisfying~(\ref{GeometricCodeConditions})
and~$($\ref{GeometricCodeConditions2}$)$ is finite.
\end{remark}

\begin{example}\label{m-ex}
Before to start the proof we study two examples. Consider
$$
\begin{array}{c}
1)\quad  a=2, \alpha=7, \beta=2, \gamma=0;\\
2)\quad  a=2, \alpha=7, \beta=2, \gamma=1.\\
\end{array}
$$
The corresponding Minkowski-Voronoi complexes are respectively as follows:
$$
\begin{array}{c}
\begin{array}{|c|c|c|}
\hline
&t=0&t>0\\
\hline
u=0&
\begin{array}{c}\includegraphics{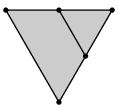}\end{array}&
\begin{array}{c}\includegraphics{4graphs_v3-1}\end{array}\\
\hline
u>0&
\begin{array}{c}\includegraphics{4graphs_v3-1}\end{array}&
\begin{array}{c}\includegraphics{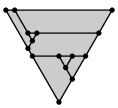}\end{array}\\
\hline
\end{array}
\\
(a=2, \alpha=7, \beta=2, \gamma=0)
\end{array}
\qquad
\begin{array}{c}
\begin{array}{|c|c|c|}
\hline
&t=0&t>0\\
\hline
u=0&
\begin{array}{c}\includegraphics{4graphs_v3-1}\end{array}&
\begin{array}{c}\includegraphics{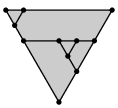}\end{array}\\
\hline
u>0&
\begin{array}{c}\includegraphics{4graphs_v3-1}\end{array}&
\begin{array}{c}\includegraphics{4graphs_v3-3}\end{array}\\
\hline
\end{array}
\\
(a=2, \alpha=7, \beta=2, \gamma=1)
\end{array}
$$
As you might notice, the resulting complexes are mainly combinatorially equivalent in these two families.
They are different only in the case $u=0$ and $t>0$.
In fact, it is quite common that if two combinatorial parts have the same values for $(a,\alpha,\beta)$ are different
values for $\gamma$, then the corresponding Minkowski-Voronoi complexes are rather similar and in some cases they are combinatorially equivalent.
\end{example}

\begin{remark}
Later on we return several times to the case $(a=2, \alpha=7, \beta=2, \gamma=1)$ in order to illustrate for the techniques proposed below.
\end{remark}

{\noindent {\bf Example of a single Minkowski-Voronoi complex computation.}}
Let is consider one particular example in more details.

\begin{example}\label{ex-ref}
The set up is
$$
a = 2,
\quad
\alpha = 7,
\quad
\beta = 2,
\quad
\gamma= 1.
$$
We set $u = 1$ and $t = 0$. Then by the formula on page~\pageref{stabilization} in Theorem~\ref{stabilization},
$$
\begin{array}{lllllll}
b&=&\alpha at +\beta
&=& 7\cdot 2\cdot 0 + 2
&=& 2,\\
N&=& (\alpha at +\beta)(au+\gamma)+\alpha
&=& 2 \cdot (2 \cdot 1 + 1) + 7
&=&13.\\
\end{array}
$$
Thus
$$
(a,b,N) = (2,2,13).
$$
By Definition~\ref{1-rank}, we are interested in the lattice
$$
\begin{aligned}
\Gamma(2,2,13)&=\{
m_1(1,a,b) + m_2(0,N,0) + m_3(0, 0,N) | m_1,m_2,m_3 \in \z\}
\\
&= \{m_1(1, 2, 2) + m_2(0, 13, 0) + m_3(0, 0, 13)
| m_1,m_2,m_3 \in \z\}.
\end{aligned}
$$
We aim to find the local minima of this lattice. By a lattice point, we will mean a non-zero lattice point. It is claimed right after the statement of Definition~\ref{1-rank} that all the relative minima, save for those on the coordinate axes,
are in the cube
$$
[-N/2,N/2] \times [-N/2,N/2] \times [-N/2,N/2].
$$
Thus the relative minima, save for the three on the coordinate axes, will be
in the cube
$$
[-6, 6] \times [-6, 6] \times[-6, 6].
$$
We now start the process for finding the relative minima.
We have in this set the following three elements on the coordinate axes:
$$
\begin{array}{llr}
(13, 0, 0)&=&13\cdot(1, 2, 2)-2\cdot(0, 13, 0)-2\cdot(0, 0, 13),\\
(0, 13, 0)&=&0 \cdot (1, 2, 2) + 1 \cdot (0, 13, 0) + 0 \cdot (0, 0, 13),\\
(0, 0, 13)&=&0 \cdot (1, 2, 2) + 0 \cdot (0, 13, 0) + 1 \cdot (0, 0, 13).\\
\end{array}
$$
We label these points as
$$
\gamma_1 = (13, 0, 0),
\qquad
\gamma_2 = (0, 13, 0),
\qquad
\gamma_3 = (0, 0, 13).
$$
Let us find the other relative minima.

Any lattice point of the form $m_2(0, 13, 0)+m_3(0, 0, 13)$
cannot be a local minima,
as all such points will contain either $\gamma_2$ or $\gamma_3$.

By listing all $m_1(1, 2, 2) + m_2(0, 13, 0) + m_3(0, 0, 13)$ (i.e., triples $m_1,m_2,m_3$), we see
that we get two more linearly independent local minima, namely
$$
\begin{array}{ccc}
(1, 2, 2)&=&1 \cdot (1, 2, 2) + 0 \cdot (0, 13, 0) + 0 \cdot (0, 0, 13),
\\
(6,-1,-1)&=&6 \cdot (1, 2, 2) - (0, 13, 0) - (0, 0, 13).
\end{array}
$$
We label these points as
$$
\gamma_4 = (1, 2, 2)
\quad \hbox{and} \quad
\gamma_5 = (6,-1,-1).
$$
We know that the faces of the Minkowski-Voronoi complex will correspond to these five local minima. The edges will correspond to pairs that are minimal.

In order to simplify the next steps we use the following notation. Let $S$ be a discrete set; consider the parallelepiped $\Pi(S)$. Let $(x,y,z)$ be the vertex $\Pi(S)$ with nonnegative $x$, $y$ nd $z$ coordinates. Similar to Example~\ref{ExVoronoi3} we denote $\Pi(S)$ by $[x,y,z]$.

Let us list below the parallelepipeds defining minimal edges for all ten possible pairs:
$$
\begin{array}{ll}
\Pi(\{\gamma_1,\gamma_2\}) =  [13, 13, 0], \quad &
\Pi(\{\gamma_2,\gamma_4\}) =  [1, 13, 2], \\
\Pi(\{\gamma_1, \gamma_3\}) = [13, 0, 13],\quad &
\Pi(\{\gamma_2,\gamma_5\}) =  [6, 13, 1],\\
\Pi(\{\gamma_1, \gamma_4\}) = [13, 2, 2],\quad  &
\Pi(\{\gamma_3,\gamma_4\}) =  [1, 2, 13],\\
\Pi(\{\gamma_1, \gamma_5\}) = [13, 1, 1], \quad &
\Pi(\{\gamma_3,\gamma_5\}) =  [6, 1, 13],\\
\Pi(\{\gamma_2,\gamma_3\}) =  [0, 13, 13],\quad &
\Pi(\{\gamma_4,\gamma_5\}) =  [6, 2, 2].
\end{array}
$$

We have
$$
\gamma_5\in
\Pi((\gamma_1,\gamma_4))
$$
meaning that
$(\gamma_1, \gamma_4)$
will not be an edge.
All the others are edges, and hence there will be nine edges.
The vertices will correspond to triple that are minimal. The ten possible triples are
$$
\begin{array}{ll}
\Pi(\{\gamma_1,\gamma_2,\gamma_3\}) = [13, 13, 13], \quad &
\Pi(\{\gamma_1,\gamma_4,\gamma_5\}) = [13, 2, 2],\\
\Pi(\{\gamma_1,\gamma_2,\gamma_4\}) = [13, 13, 2], \quad &
\Pi(\{\gamma_2,\gamma_3,\gamma_4\}) = [1, 13, 13],\\
\Pi(\{\gamma_1,\gamma_2,\gamma_5\}) = [13, 13, 1], \quad &
\Pi(\{\gamma_2,\gamma_3,\gamma_5\}) = [6, 13, 13],\\
\Pi(\{\gamma_1,\gamma_3,\gamma_4\}) = [13, 2, 13], \quad &
\Pi(\{\gamma_2,\gamma_4,\gamma_5\}) = [6, 13, 2],\\
\Pi(\{\gamma_1,\gamma_3,\gamma_5\}) = [13, 1, 13], \quad &
\Pi(\{\gamma_3,\gamma_4,\gamma_5\}) = [6, 2, 13].\\
\end{array}
$$
A triple will not be minimal if one of the other $\gamma_i$ not making up the triple is in its Voronoi minimal set. We see that
$$
\begin{array}{lll}
\gamma_4 = (1, 2, 2) &\in& \Pi(\gamma_1,\gamma_2,\gamma_3),\\
\gamma_5 = (6,-1,-1) &\in& \Pi(\gamma_1,\gamma_2,\gamma_4),\\
\gamma_5 = (6,-1,-1) &\in& \Pi(\gamma_1,\gamma_3,\gamma_4),\\
\gamma_4 = (1,2,2) &\in& \Pi(\gamma_2,\gamma_3,\gamma_5).\\
\end{array}
$$

In addition, for one of the vertices we have:
$\gamma_5=(6,-1,-1)$ is an interior point of $\Pi(\{\gamma_1,\gamma_4,\gamma_5\})$=[13,2,2].
Hence by Definition~\ref{def-minimal} (ii) the set $\{\gamma_1,\gamma_4,\gamma_5\}$ is not minimal.

Therefore, we end up with the following five vertices:
$$
\begin{array}{l}
v_1 = (\gamma_1,\gamma_2,\gamma_5),\\
v_2 = (\gamma_1,\gamma_3,\gamma_5),\\
v_3 = (\gamma_2,\gamma_3,\gamma_4),\\
v_4 = (\gamma_2,\gamma_4,\gamma_5),\\
v_5 = (\gamma_3,\gamma_4,\gamma_5).\\
\end{array}
$$

Finally, let us show the labeling for the edges of the diagram.
First consider the (oriented) edge $\{\gamma_1,\gamma_5\}$ starting at $v_1= (\gamma_1,\gamma_2,\gamma_5)$ and ending $v_2=(\gamma_1,\gamma_3,\gamma_5)$.
Here the parallelepiped $[13,13,1]$ changes to parallelepiped $[13,1,13]$. Hence its label is: $(y\downarrow , z\uparrow )$.

In the following table we give labels for the rest of edges.

\vspace{2mm}

\begin{center}
\begin{tabular}{|c|c|c|c|}
\hline
{\sc edge } & {\sc from} & {\sc \quad to\quad} & {\sc change}\\
\hline
$\{\gamma_1,\gamma_5\}$ & $v_1$ & $v_2$ & $(y\downarrow,z\uparrow)$\\
\hline
$\{\gamma_2,\gamma_5\}$ & $v_1$ & $v_4$ & $(x\downarrow,z\uparrow)$\\
\hline
$\{\gamma_3,\gamma_5\}$ & $v_2$ & $v_5$ & $(x\downarrow,y\uparrow)$\\
\hline
$\{\gamma_2,\gamma_4\}$ & $v_3$ & $v_4$ & $(x\uparrow,z\downarrow)$\\
\hline
$\{\gamma_3,\gamma_4\}$ & $v_3$ & $v_5$ & $(x\uparrow,y\downarrow)$\\
\hline
\end{tabular}
\qquad
\begin{tabular}{|c|c|c|c|}
\hline
{\sc edge } & {\sc from} & {\sc \quad to\quad } & {\sc change}\\
\hline
\hline
$\{\gamma_4,\gamma_5\}$ &$v_4$ & $v_5$ & $(y\downarrow,z\uparrow)$\\
\hline
$\{\gamma_1,\gamma_2\}$ &$v_1$ & $\infty$ & ---\\
\hline
$\{\gamma_1,\gamma_3\}$ &$v_2$ & $\infty$ & ---\\
\hline
$\{\gamma_2,\gamma_3\}$ &$v_3$ & $\infty$ & ---\\
\hline
\end{tabular}
\end{center}

\vspace{2mm}

Recall that the edges $\{\gamma_1,\gamma_2\}$, $\{\gamma_1,\gamma_3\}$, and $\{\gamma_2,\gamma_3\}$
are represented by rays in the Minkowski-Voronoi tessellation and hence they are not labeled.

Finally we show the Minkowski-Voronoi tessellation.
$$
\includegraphics{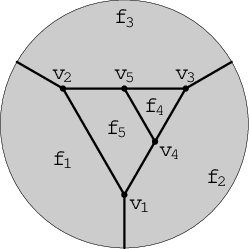}
$$
Here $f_i$ correspond to $\{\gamma_i\}$ for $i=1,\ldots, 5$.
\end{example}

\vspace{2mm}

{\noindent
{\bf General strategy to prove Theorem~\ref{stabilization}.}
We prove Minkowski-Voronoi complex stabilization Theorem in several steps.

First of all we study the structure of the set of relative minima
for the lattices $\Gamma(a,b,N)$ for triples with the same combinatorial part $(\alpha, \beta, \gamma, a)$
and with parameters $t$ and $u$.
Further for each $(a,b,N)$ we construct the list of relative minima $\Xi_{\alpha, \beta, \gamma, a}(t,u)$ satisfying nice properties:
\begin{itemize}
\item{The list $\Xi_{\alpha, \beta, \gamma, a}(t,u)$ contains all the relative minima of $\Gamma(a,b,N)$.}

\item{The number the elements in $\Xi_{\alpha, \beta, \gamma, a}(t,u)$ depends only on the combinatorial part, i.e., on $(\alpha, \beta, \gamma, a)$};

\item{The comparison relation between every $i$-th and $j$-th elements of different lists are the same
for sufficiently large $t$ (or respectively $u$, or $t$ and $u$).}
\end{itemize}
After the lists with such properties are constructed the proof is straightforward.
First, the Minkowski-Voronoi complexes for the lattices $\Gamma_N(a,b)$ coincide
with the Minkowski-Voronoi complexes for the points in the list $\Gamma(a,b,N)$.
Secondly, the combinatorial structures of the last ones are the same due
to the same comparison relations for sufficiently large parameters.

\vspace{2mm}

The remaining part of this section is organized as follows.
First we study the structure of the set of relative minima for the
lattices of a fixed family in Subsection~\ref{Classification of relative minima}.
Then in Subsection~\ref{Asymptotic comparison of coordinate functions} we formulate a notion of asymptotic comparison of
a pair of functions and prove the asymptotic comparison of the coordinate functions for the points of Minkowski-Voronoi complexes
in the family.
Finally, in Subsection~\ref{Conclusion of the proof} we conclude the proof of Theorem~\ref{stabilization}.
}

\subsection{Classification of relative minima of $\Gamma(a,b,N)$}\label{Classification of relative minima}

In this section we show that all relative minima of $\Gamma(a,b,N)$ fall into three categories of points,
whose coordinates possess regularities which we further use in the proof of Theorem~\ref{stabilization}.
The explicit description of these categories significantly reduces the construction time of the set of all relative minima for a given lattice.
Here we do not consider the case $t=0$ which is slightly different, we come back to it later in Subsection~\ref{Conclusion of the proof}.

\subsubsection{Some integer notation}
Let $a$ and $N$ be two numbers. Consider $0\le b<N$ such that $a-b$ is an integer divisible by $N$.
We denote
$$
b=a \modd N.
$$
Denote also
$$
|a|_N=\min(a \modd N, -a \modd N).
$$

\subsubsection{Three types of relative minima}
Consider the set $\vrm(|\Gamma(a,b,N)|)$ of all Voronoi relative minima.
Recall that each element of
$$
\vrm(|\Gamma(a,b,N)|)\setminus \big\{(N,0,0), (0,N,0), (0,0,N) \big\}
$$
is written in the form
$$
\big(x, |ax|_N, |bx|_N\big)
$$
for some integer $x \in \big[1,\frac{N}{2}\big]$.
In what follows we distinguish relative minima of three types defined by the first coordinate $x$.
First, let us decompose the interval
$I=\big[0,\frac{N}{2}\big)$ in the union
$$
I=\bigcup\limits_{k=0}^{a-1}I_k, \qquad \hbox{where} \qquad I_k=\left[\frac{k}{2a}N,\frac{k+1}{2a}N \right).
$$
Now every interval $I_k$ we decompose into three more intervals:
\begin{equation}
\label{parti}
I_k=I_{k,1}\bigcup I_{k,2}\bigcup I_{k,3},
\end{equation}
where for even $k$ we set
\begin{gather*}
  I_{k,1}=\left [\frac{k}{2a}N,\frac{k}{2a}N+1\right), \quad
  I_{k,2}=\left [\frac{k}{2a}N+1,\frac{k}{2a}N+au+\gamma\right), \\
  I_{k,3}=\left [\frac{k}{2a}N+au+\gamma,\frac{k+1}{2a}N\right),
\end{gather*}
and for odd $k$ we set
\begin{gather*}
  I_{k,1}=\left [\frac{k+1}{2a}N-1,\frac{k+1}{2a}N\right), \quad
  I_{k,2}=\left [\frac{k+1}{2a}N-(au+\gamma),\frac{k+1}{2a}N-1\right),  \\
  I_{k,3}=\left [\frac{k}{2a}N,\frac{k+1}{2a}N-(au+\gamma)\right).
\end{gather*}
We say that a relative minimum $\big(x, |ax|_N, |bx|_N\big)$ is {\it of the first type}, {\it of the second type},
or {\it of the third type} if
$x\in I_{k,1}$, $x\in I_{k,2}$, or $x\in I_{k,3}$ for some $k$ respectively.

There are three more minima of $\vrm(|\Gamma(a,b,N)|)$ except for the listed above, they are:
$(N,0,0)$, $(0,N,0)$, and $(0,0,N)$.
\vspace{2mm}

\begin{remark}
Notice that we have omitted the value $\frac{N}{2}$.
If $x=\frac{N}{2}$ then $|ax|_n,|bx|_n\in\{0,\frac{N}{2}\}$.
Both $|ax|_n$ and $|bx|_n$ are nonzero, since the set $\vrm(|\Gamma(a,b,N)|)$ is in general position.
Hence the only point to consider here is $(\frac{N}{2},\frac{N}{2},\frac{N}{2})$.
If this point is a relative minimum, then the point $(1,a,b)$ should not be in the parallelepiped
$\Pi\big(\frac{N}{2},\frac{N}{2},\frac{N}{2}\big)$, except if $(1,a,b)=\big(\frac{N}{2},\frac{N}{2},\frac{N}{2}\big)$.
This is the case only if $a{=}b{=}1,N{=}2$ which we do not study in Theorem~\ref{stabilization} ($b>1$ is not included by Definition~\ref{Definition_code}).
Therefore, the value $\frac{N}{2}$ can be omitted.
\end{remark}

\vspace{2mm}

{\noindent
{\bf Example~\ref{m-ex}, case $(a=2, \alpha=7, \beta=2, \gamma=1)$, continued,  part 1 of 4.}
Here we consider nonnegative integer parameters $t,u$. Recall that in our case
$$
b=b(t,u)=14t+2, \qquad N=N(t,u)=28tu+14t+4u+9.
$$
Since $a=2$, we have $2\cdot 3=6$ intervals in the decomposition of the unit segment $I$.
They are as follows:
$$
\begin{array}{ccc}
I_{0,1}=[0, 1),& I_{0,2}=[1, 2u{+}1), & I_{0,3}=\big[2u{+}1, N/4),\\
I_{1,3}=[N/4,N/2{-}2u{-}1), & I_{1,2}=[N/2{-}2u{-}1,N/2{-}1)&I_{1,1}=[N/2{-}1,N/2).\\
\end{array}
$$
}

\subsubsection{Properties of relative minima of different types}

Let us describe some important properties of the set of relative minima with respect to their types.

\begin{proposition}\label{StructuralProposition}{\bf Structural proposition.}
{\it $($i$)$} Let $(x,y,z)$ be a relative minimum of the first type of $\vrm(|\Gamma(a,b,N)|)$.
Then there exists a nonnegative integer $k\le a-1$ such that
\begin{align}\label{f1}
x=
\left\{
\begin{array}{cl}
\displaystyle\left\lceil\frac{k}{2a}N\right\rceil,     &\hbox{if $k$ is even},\\
\displaystyle\left\lfloor\frac{k+1}{2a}N\right\rfloor, &\hbox{otherwise}.
\end{array}
\right.
\end{align}

{\noindent
{\it $($ii$)$} Let $(x,y,z)$ be a relative minimum of the second type.
Then there exist a nonnegative even integer $k=2\hat k\le a-1$ and $\varepsilon\in \{0,1\}$ such that
$x$ equals to one of the following numbers
\begin{align}\label{f2}
  \left\lfloor N\left(\frac{\hat k}{a}-\frac{1}{b}\left\{\frac{\hat k\beta}{a}\right\}\right)\right\rfloor+\varepsilon, \quad
  \left\lfloor N\left(\frac{\hat k}{a}+\frac{1}{b}-\frac{1}{b}\left\{\frac{\hat k\beta}{a}\right\}\right)\right\rfloor+\varepsilon,
\end{align}
or there exist a nonnegative odd integer $k=2\hat k{+}1\le a-1$ and $\varepsilon\in \{0,1\}$ such that
$x$ equals to one of the following numbers
\begin{align}\label{f3}
  \left\lfloor N\left(\frac{\hat k+1}{a}-\frac{1}{b}\left\{\frac{(k+1)\beta}{a}\right\}\right)\right\rfloor+\varepsilon, \quad
  \left\lfloor N\left(\frac{\hat k+1}{a}+\frac{1}{b}-\frac{1}{b}\left\{\frac{(\hat k+1)\beta}{a}\right\}\right)\right\rfloor+\varepsilon,
\end{align}
}

{\noindent
{\it $($iii$)$} Let $(x,y,z)$ be a relative minimum of the third type.
Then it holds
\begin{align}\label{f4}
       |bx|_N\le \alpha.
\end{align}
}
\end{proposition}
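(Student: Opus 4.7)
The plan is to analyze the three items separately, exploiting the fact that on every interval $I_k$ the map $x \mapsto |ax|_N$ is affine with slope $+a$ on even-indexed pieces and $-a$ on odd-indexed ones. Consequently, once $x$ is specified the middle coordinate $|ax|_N$ is given by an explicit formula, and the analysis reduces to controlling the third coordinate $|bx|_N$.

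For part~(i), I would simply observe that $I_{k,1}$ is a half-open interval of length exactly one and therefore contains a unique integer, which is precisely the one written in formula~(\ref{f1}). Hence any first-type relative minimum is forced to have this $x$-coordinate; no further condition is needed.

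For part~(ii), I would treat the even case (the odd case following by the symmetry $x \mapsto (k+1)N/(2a) + (kN/(2a) - x)$ on $I_k$, which exchanges the two halves of $I_k$). Writing $x = \lceil kN/(2a)\rceil + r$ with $r\in\{1,\ldots, au+\gamma-1\}$, the relation $b = \alpha a t + \beta$ shows that as $r$ varies in this range the quantity $|bx|_N$ is a piecewise-affine function of $r$ whose jumps occur precisely where $bx/N$ crosses an integer. The only $r$'s capable of producing a relative minimum are therefore those that bring $bx/N$ as close to an integer as possible, from below or from above. Combining this observation with the identity $N/b = au+\gamma + \alpha/b$ and translating into integer arithmetic, the critical $r$'s are pinned down as the integer approximations of $N\{k\beta/a\}/b$ and of $N(\{k\beta/a\} - 1)/b$; collecting these yields exactly the four candidates in formula~(\ref{f2}), with the parameter $\varepsilon\in\{0,1\}$ absorbing the floor/ceiling ambiguity that the general-position hypothesis guarantees is inessential.

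Part~(iii) follows by exhibiting a single explicit dominating lattice vector. The identity $N = b(au+\gamma) + \alpha$ says that the lattice element $(au+\gamma)(1,a,b) - (0,0,N)$ projects in $|\Gamma(a,b,N)|$ to $v_0 = (au+\gamma,\, a(au+\gamma),\, \alpha)$; one checks that whenever $I_{k,3}$ is non-empty one automatically has $a(au+\gamma) < N/2$, so the middle coordinate of $v_0$ really is $a(au+\gamma)$. For $x\in I_{k,3}$ with $k\ge 1$ we have $x > au+\gamma$, and the affine description of $|ax|_N$ on $I_k$ gives $|ax|_N \ge a(au+\gamma)$. If $|bx|_N > \alpha$, then $v_0$ strictly dominates $(x,|ax|_N,|bx|_N)$, contradicting the relative-minimum property; hence $|bx|_N \le \alpha$. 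The case $k=0$ requires only a minor bookkeeping adjustment, and the odd-$k$ case follows by the same reflection used in~(ii).

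The principal obstacle is part~(ii): the explicit identification of the four candidate $x$-coordinates via the fractional parts $\{k\beta/a\}$ requires a careful arithmetic analysis of $|bx|_N$ inside $I_{k,2}$, and one must use the relative-minimum property to eliminate every other integer in $I_{k,2}$ as a candidate. Parts~(i) and~(iii) are much more geometric and should be considerably shorter.
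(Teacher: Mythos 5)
Your parts (i) and (iii) match the paper's argument: (i) is exactly the unit-length observation, and in (iii) you produce the same dominating point $(au+\gamma,\,a(au+\gamma),\,\alpha)$ coming from $N=b(au+\gamma)+\alpha$, together with the monotonicity of $x$ and $|ax|_N$ on $I_{k,3}$, which is precisely how the paper forces $|bx|_N\le\alpha$.

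Part (ii), however, has a genuine gap at the elimination step, and it is exactly the step you yourself flag as ``the principal obstacle'' without resolving it. Your claim that ``the only $r$'s capable of producing a relative minimum are therefore those that bring $bx/N$ as close to an integer as possible'' does not follow from the piecewise-affine structure of $r\mapsto|bx|_N$: relative minimality is a domination condition against other points of $|\Gamma(a,b,N)|$, and nothing in your argument excludes, say, an integer $x\in I_{k,2}$ at which $|bx|_N$ is a running minimum within the interval but still larger than $b$. The missing idea is a concrete dominating lattice point, and the paper uses the generator's image $(1,a,b)$: for $x\in I_{k,2}$ (even $k$) one has $x>1$ and $|ax|_N=ax_1>a$, so if $(x,|ax|_N,|bx|_N)$ is a relative minimum then $(1,a,b)\notin\Pi$, which forces $|bx|_N\le b$. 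Only after this bound does the arithmetic you sketch take over: the zeros of $f(x)=|bx|_N$ are spaced $N/b=au+\gamma+\alpha/b>au+\gamma$ apart, so $I_{k,2}$ (of length $au+\gamma-1$) contains at most two integers with $|bx|_N\le b$, namely the integer neighbours of a zero, and computing the zeros nearest $\frac{k}{a}N$ via $\{kb/a\}=\{k\beta/a\}$ yields the four candidates of \eqref{f2}. Without the vector $(1,a,b)$ (or an equivalent dominating point) your ``therefore'' is a non sequitur and the list of candidates cannot be pinned down. A smaller caveat: your proposed reflection $x\mapsto\frac{2k+1}{2a}N-x$ is not an exact symmetry of $(|ax|_N,|bx|_N)$ (it is not even integer-valued in general); the odd-$k$ case should simply be rerun with the sign of the slope of $|ax|_N$ reversed, as the paper does.
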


\begin{proof}
We study two cases of odd and of even $k$ respectively.

\vspace{2mm}

{\noindent
{\bf The case of even $k$.} Let us consequently consider three types of relative minima.
}

\vspace{1mm}

{\noindent
{\it Type 1.} The interval $I_{k,1}$ is of unit length, and hence there is at most one local minimum with the first coordinate $x\in I_{k,1}$.
If such a minimum exists then $x$ satisfies~\eqref{f1}.
}

\vspace{2mm}

{\noindent
{\it Type 2.}
Let now $x=\frac{k}{2a}N+x_1\in I_{k,2}$. Then
$$
1<x_1<au+\gamma \qquad  \hbox{and} \qquad a<|ax|_N=ax_1<a(au+\gamma).
$$
Suppose that $\big(x, |ax|_N, |bx|_N\big)$ is a relative minimum.
Then the point $(1,a,b)$ should not be in the parallelepiped $\Pi\big(x, |ax|_N, |bx|_N\big)$,
which is possible only if
\begin{align}\label{bx}
       |bx|_N\le b.
\end{align}
From the definition of $b$ we know that the period of the function $f(x)=|bx|_N$  (as a function of real numbers) is exactly $N/b>au+\gamma$.
Therefore the interval $I_{k,2}$ contains at most two lattice points satisfying~(\ref{bx}).
These points are the endpoints of a unit interval containing a root of $f$.
The roots of $f$ that are the closest to the point $\frac{k}{2a}N=\frac{\hat k}{a}N$ will be
$$
  N\left(\frac{\hat k}{a}-\frac{1}{b}\left\{\frac{\hat k\beta}{a}\right\}\right), \qquad
  N\left(\frac{\hat k}{a}+\frac{1}{b}-\frac{1}{b}\left\{\frac{\hat k\beta}{a}\right\}\right).
$$
They lie at different sides from the point $\frac{\hat k}{a}N$.
(This follows from the equality $\big\{\frac{\hat kb}{a}\big\}=\big\{\frac{\hat k\beta}{a}\big\}$.)
Hence the first coordinate of the point $\big(x, |ax|_N, |bx|_N\big)$
of the second type should have one of the following values
\begin{align*}
  \left\lfloor N\left(\frac{\hat k}{a}-\frac{1}{b}\left\{\frac{\hat k\beta}{a}\right\}\right)\right\rfloor+\varepsilon, \quad
  \left\lfloor N\left(\frac{\hat k}{a}+\frac{1}{b}-\frac{1}{b}\left\{\frac{\hat k\beta}{a}\right\}\right)\right\rfloor+\varepsilon, \quad
  \varepsilon=0,1.
\end{align*}
This concludes the proof for the second type of relative minima that fall to the case of even $k$.
}

\vspace{2mm}

{
\noindent
{\it Type 3.}
Suppose now $x=\frac{k}{2a}N+x_1\in I_{k,3}$. Then
$$
au+\gamma<x_1<\frac{N}{2a}
\qquad \hbox{and} \qquad
a(au+\gamma)<|ax|_N=ax_1<\frac{N}{2}.
$$
In case if $\big(x, |ax|_N, |bx|_N\big)$ is a relative minimum,
the parallelepiped $\Pi\big(\big(x, |ax|_N, |bx|_N\big)\big)$ does not contain
the point
$$
(au+\gamma, |a(au+\gamma)|_N, |b(au+\gamma)|_N)=(au+\gamma, a(au+\gamma), \alpha),
$$
which holds only if $|bx|_N\le \alpha$.
}


\vspace{2mm}

{
\noindent {\bf The case of odd $k$.}
There exists at most one relative minimum with $x\in I_{k,1}$.
In case of existence, the first coordinate of the minimum is
\begin{align*}
       x=\left\lfloor\frac{k+1}{2a}N \right\rfloor.
\end{align*}
Similarly to the case of even $k$ the first coordinate of every relative minimum of the second types
equals to one of the coordinates of~\eqref{f3}, and the first coordinate of every relative minimum of the third type
satisfies inequality~\eqref{f4}.
The proofs here literally repeat the proofs for the case of even $k$, so we omit them.
}
\end{proof}

\subsubsection{Definition of the list $\Xi_{\alpha, \beta, \gamma, a}(t,u)$ and its basic properties}

Proposition~\ref{StructuralProposition} suggests the following definition.
\begin{definition}
For every $a$, $b=b(t)$, and $N=N(t,u)$ we consider the lattice $\Gamma(a,b,N)$.
Let us form a list $\Xi_{\alpha, \beta, \gamma, a}(t,u)$ of all points mentioned in Proposition~\ref{StructuralProposition}:
\vspace{1mm}
\begin{itemize}

\item First, we add to the list $a$ points of the first type, mentioned in Proposition~\ref{StructuralProposition}(i).
We enumerate them with respect of $k$.

\item Second, we consider $4a$ points of the second type of Proposition~\ref{StructuralProposition}(ii).
We enumerate them with respect to $k$, $\varepsilon$, and
the order in the strings~\eqref{f2} and~\eqref{f3}
(i.e., for the same $k$ and $\varepsilon$ we choose the left one before the right one).

\item
Third, we count $\alpha$ points of the third type mentioned in Proposition~\ref{StructuralProposition}(iii).
We choose the enumeration by the value of the last coordinate (i.e., by $|bx|_N$).

\item
Finally, we add three points $(N,0,0)$, $(0,N,0)$, and $(0,0,N)$.
\end{itemize}
\end{definition}

{
\noindent {\it Remark.}
Notice that some points in the list $\Xi_{\alpha, \beta, \gamma, a}(t,u)$ could be counted several times, we do this with intension to
use it further in the proof of Theorem~\ref{stabilization}.
}

\vspace{2mm}

{\noindent
{\bf Example~\ref{m-ex}, case $(a=2, \alpha=7, \beta=2, \gamma=1)$, continued, part 2 of 4.}
Here we have the following 20 points in the list.
$$
\begin{array}{ll}
\hbox{Type I:}  &p_{I1}=(0, 0, 0), \quad
                 p_{I2}=(14tu{+}7t{+}2u{+}4, 1, 7t{+}1),\\
\hbox{Type II:} &p_{II1}=(0, 0, 0), \quad
                 p_{II2}=(2u{+}1, 4u{+}2, 7), \\
&                p_{II3}=(1, 2, 14t{+}2),\quad
                 p_{II4}=(2u{+}2, 4u{+}4, 14t-5),\\
&                p_{II5}=(14tu{+}7t{+}2u{+}4, 1, 7t{+}1),\quad
                 p_{II6}=(14tu{+}7t{+}4, 4u + 1, 7t {+} 8),\\
&                p_{II7}=(14tu{+}7t{+}2u{+}4, 1, 7t{+}1),\quad
                 p_{II8}=(14tu{+}7t{+}3, 4y{+}3, 7t-6),\\
\hbox{Type III:}&p_{III1}=(12tu{+}6t{+}2u{+}4, 4tu{+}2t{+}1, 1),\quad
                 p_{III2}=(4tu{+}2t{+}1, 8tu{+} 4t{+} 2, 2),\\
&                p_{III3}=(8tu{+}4t{+}2u{+}3, 12tu{+}6t{+}3, 3),\\
&                p_{III4}=(8tu{+}4t{+}2, 12tu{+}6t{+}4u{+}5, 4),\\
&                p_{III5}=(4tu{+}2u+ 2t{+}2, 8tu{+}4t{+}4u{+}4, 5),\\
&                p_{III6}=(12tu{+}6t{+}3, 4tu{+}4u{+}2t{+}3, 6),\quad
                 p_{III7}=(2u{+}1, 4u{+}2, 7),\\
\hbox{Extra:}   &p_{01}=(28tu{+}14t{+}4u{+}9, 0, 0),\\
&                p_{02}=(0, 28tu{+}14t{+}4u{+}9, 0),\\
&                p_{03}=(0, 0, 28tu{+}14t{+}4u{+}9).
\end{array}
$$
As one can see we have some zero entry and some repeating entry in the list.
After removing them we have the following list of 15 vertices:
$$
\begin{array}{c}
p_{I2}, \quad
p_{II3}, \quad
p_{II4}, \quad
p_{II6}, \quad
p_{II8}, \quad
p_{III1}, \quad
p_{III2}, \quad
p_{III3}, \quad
p_{III4}, \quad
p_{III5}, \quad
p_{III6}, \quad
p_{III7}, \\
p_{01}, \quad
p_{02}, \quad
p_{03}.
\end{array}
$$
Note also, that for small $t$ and $u$ one should apply $|{*}|_N$ to every coordinate of every point.
}

\vspace{2mm}

From Proposition~\ref{StructuralProposition} we directly get the following corollary.
\begin{corollary}\label{ListVoronoi}
The following hold

{\noindent
{\it $($i$)$} Every relative minimum of $|\Gamma(a,b,N)|$ is contained in the list $\Xi_{\alpha, \beta, \gamma, a}(t,u)$.
}

{\noindent
{\it $($ii$)$} The set $\vrm(|\Gamma(a,b,N)|)$ contains at most $\alpha+5a+3$ elements.
}
\qed
\end{corollary}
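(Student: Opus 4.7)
The plan is to derive the corollary essentially as an accounting exercise on top of Proposition~\ref{StructuralProposition}. All the nontrivial work — identifying the three types of relative minima and pinning down the possible first or third coordinates — has already been done, so here I only need to translate those constraints into a count and check that the list $\Xi_{\alpha,\beta,\gamma,a}(t,u)$ really captures every relative minimum.

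For part (i), I would argue as follows. Any relative minimum of $|\Gamma(a,b,N)|$ either lies on a coordinate axis, in which case it is one of $(N,0,0)$, $(0,N,0)$, $(0,0,N)$ and is included in $\Xi_{\alpha,\beta,\gamma,a}(t,u)$ by construction, or has a first coordinate $x\in[1,N/2]$ of the form $(x,|ax|_N,|bx|_N)$. Any such $x$ belongs to some $I_{k,j}$ from the partition~\eqref{parti}, with $k\in\{0,1,\ldots,a-1\}$ and $j\in\{1,2,3\}$. Proposition~\ref{StructuralProposition}(i)--(iii) then forces $x$ to lie in the explicit finite list that defines $\Xi_{\alpha,\beta,\gamma,a}(t,u)$: formula~\eqref{f1} for type~1, formulas~\eqref{f2}--\eqref{f3} with $\varepsilon\in\{0,1\}$ for type~2, and the bound $|bx|_N\le\alpha$ for type~3. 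So each relative minimum appears in $\Xi_{\alpha,\beta,\gamma,a}(t,u)$.

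For part (ii), I would count contributions by type. Minima of the first type contribute at most one point per $k\in\{0,\ldots,a-1\}$, giving at most $a$ points. Minima of the second type contribute, for each $k$, two choices from~\eqref{f2} (or~\eqref{f3}) times two choices of $\varepsilon$, hence at most $4a$ points in total. For minima of the third type, the bound $|bx|_N\le\alpha$ combined with $|bx|_N\ne 0$ (since $\gcd(b,N)=1$ and $x\in[1,N/2]$ forces $|bx|_N\ge 1$) confines the third coordinate to $\{1,2,\ldots,\alpha\}$, and the general position hypothesis, which forbids two points of $\vrm(|\Gamma(a,b,N)|)$ from sharing a value in any coordinate, forces the last coordinate to distinguish them; so there are at most $\alpha$ such minima. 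Adding the three axial minima gives the total bound $\alpha+5a+3$.

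The only subtle point — really the only thing that could be called an obstacle — is the type~3 count. It requires remembering why distinct relative minima of type~3 cannot share a value of $|bx|_N$, namely the general position property recorded at the very start of Section~\ref{Definitions} (each plane parallel to a coordinate plane meets the set in at most one point) together with Proposition~\ref{GeneralLattice}, which guarantees that this property holds for $\vrm(|\Gamma(a,b,N)|)$ under the coprimality assumptions. Once this is noted, parts (i) and (ii) follow immediately from Proposition~\ref{StructuralProposition}, completing the proof.
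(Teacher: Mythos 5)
Your proposal is correct and follows essentially the route the paper intends: part (i) is read off directly from Proposition~\ref{StructuralProposition} via the partition into the intervals $I_{k,j}$, and part (ii) is just the count $a+4a+\alpha+3$ of entries in the list $\Xi_{\alpha,\beta,\gamma,a}(t,u)$ (your type-3 injectivity remark, which also follows simply from $\gcd(b,N)=1$ so that $|bx|_N$ determines $x\in[1,N/2]$, is exactly why that part of the list has only $\alpha$ entries). Nothing is missing.
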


\begin{remark}
The statements of this corollary give rise to a fast algorithm constructing relative Minkowski-Voronoi
complexes. Let us briefly outline the main stages of this algorithm.

{\it Stage 1:} Construct $\Xi_{\alpha, \beta, \gamma, a}(t,u)$.

{\it Stage 2:} Choose relative minima from the list $\Xi_{\alpha, \beta, \gamma, a}(t,u)$
(note that $(0,0,0)$ might happen in the list but it is not counted as a relative minimum).

{\it Stage 3:} Construct the diagram of the corresponding Minkowski-Voronoi complex.

{\noindent
Stages 1 and 2 are straightforward. Stage 3 is described above in Section~\ref{Canonical diagrams}.
The numbers of additions and multiplications used by this algorithm do not depend on $t$ and $u$.
}
\end{remark}

For a fixed combinatorial part $(\alpha, \beta, \gamma, a)$
let us consider the lists $\Xi_{\alpha, \beta, \gamma, a}(t,u)$ as a family with parameters $t$ and $u$.
Such family has the following remarkable properties.
First of all, all lists in the family have the same number of elements.
Secondly, the points of the lists with the same number
form a two-dimensional families whose properties are described in the next proposition.

\begin{proposition}\label{ListCoords}
For a fixed combinatorial part $(\alpha, \beta, \gamma, a)$ consider a family of lists $\Xi_{\alpha, \beta, \gamma, a}(t,u)$ with parameters $t$ and $u$.
Then for every $s\le \alpha+5a+3$ the $s$-th point in the lists $\Xi_{\alpha, \beta, \gamma, a}(t,u)$ is written as
$p_s(t,u)=(|x|_N,|y|_N,|z|_N)$ where
\begin{align}\label{f777}
     (x,y,z)=(A_1N+C_1u+D_1, A_2N+C_2u+D_2, A_3N+C_3t+D_3),
\end{align}
where the constants $A_i,B_i,C_i,D_i$ depend entirely on the combinatorial part $($i.e., on $a$, $\alpha$, $\beta$, and $\gamma$, and do not depend on
the parameters $t$ and $u$$)$.
\end{proposition}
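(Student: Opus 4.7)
The plan is to verify the claimed linear form point-by-point, dividing the list $\Xi_{\alpha,\beta,\gamma,a}(t,u)$ into the three types described in Proposition~\ref{StructuralProposition} together with the three axial points $(N,0,0)$, $(0,N,0)$, $(0,0,N)$ (which trivially fit the form). The machinery in every case is the pair of master identities
\[
b=\alpha at+\beta,\qquad N=b(au+\gamma)+\alpha=\alpha a^{2}tu+\alpha a\gamma\,t+\beta a\,u+\beta\gamma+\alpha,
\]
together with the congruences $N\equiv\alpha\pmod b$ and $N\equiv\beta\gamma+\alpha\pmod a$, which show that many residues of $N$ modulo small numbers depend only on $a,\alpha,\beta,\gamma$ (possibly after passing to residue classes of $t,u$ modulo a fixed integer; such refinement is harmless, as it only subdivides the combinatorial data into finitely many sub-cases).

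For Type~1 points I unfold the floor/ceiling of $kN/(2a)$ in Proposition~\ref{StructuralProposition}(i) as $(kN+R_k)/(2a)$ with $R_k=(-kN)\bmod 2a$, a value determined by $N\bmod 2a$; after the parity subdivision just described, this places $x$ in the form $A_1N+D_1$. For Type~2 points I exploit the identity $kb-(k\beta\bmod a)=a(k\alpha t+\lfloor k\beta/a\rfloor)$ and the expansion $N/b=(au+\gamma)+\alpha/b$ to rewrite the floor in Proposition~\ref{StructuralProposition}(ii) as
\[
x=(k\alpha t+\lfloor k\beta/a\rfloor)(au+\gamma)+\Bigl\lfloor\tfrac{\alpha(k\alpha t+\lfloor k\beta/a\rfloor)}{b}\Bigr\rfloor+\varepsilon;
\]
the residual floor is bounded and, in each fixed residue class of $t$, eventually stabilises to a constant determined by the combinatorial part, and absorbing it into $D_1$ and reintroducing $N$ via $(au+\gamma)=(N-\alpha)/b$ places $x$ in the required form. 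In both Types~1 and~2, the derived coordinates $y=|ax|_N$ and $z=|bx|_N$ are then obtained by multiplying $x$ by $a$ (respectively $b$) and performing a single reduction modulo $N$, which, by the master identities, introduces exactly one linear correction in $u$ (respectively $t$).

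The main obstacle is Type~3, where $x$ is specified only implicitly by $|bx|_N\le\alpha$. I parametrize these points by the pair $(\ell,s)$ with $bx=rN+s$, $|s|\le\alpha$, and $\ell=(r\alpha+s)/b\in\mathbb{Z}_{\ge 0}$; substituting $N=b(au+\gamma)+\alpha$ gives $x=r(au+\gamma)+\ell$ with $r=(\ell b-s)/\alpha$. Integrality of $r$ forces $\ell\beta\equiv s\pmod\alpha$, and since $\gcd(\alpha,\beta)=1$ this uniquely determines $\ell\in\{0,\ldots,\alpha-1\}$ from $s$; in particular $\ell$ depends \emph{only} on the combinatorial part. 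Substituting $b=\alpha at+\beta$ then shows that $r=\ell at+(\ell\beta-s)/\alpha$ is linear in $t$ with coefficients in the combinatorial data, and expanding $x=r(au+\gamma)+\ell$ while using the master identity for $N$ to eliminate the $tu$-term matches $x$ with the form $A_1N+C_1u+D_1$ (concretely, one gets $A_1=\ell/\alpha$, $C_1=-sa/\alpha$, $D_1=-s\gamma/\alpha$, which are rational functions of $(\alpha,\beta,\gamma,a)$ alone). The same reduction-modulo-$N$ argument produces $y$ and $z$. What requires care in this final case is (a) verifying that each admissible $(\ell,s)$-pair yields exactly one Type~3 point with $x\in[1,N/2]$ once $t,u$ are large, which follows by a counting argument against the known $\alpha$ Type~3 points in the list, and (b) checking that the (rational) constants $A_i,C_i,D_i$ are indeed functions only of the combinatorial part, which reduces to the stability of $\ell$ established above.
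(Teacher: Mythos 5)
Your overall architecture is the same as the paper's: verify the linear form for the first coordinate type by type from Proposition~\ref{StructuralProposition}, then propagate to $y$ and $z$ by a single reduction modulo $N$ using $b=\alpha at+\beta$ and $N=b(au+\gamma)+\alpha$ (this propagation step is exactly the paper's Lemma~\ref{ListCoordsLemma}). Your Type~3 analysis is a genuinely different and cleaner route: instead of the paper's continued-fraction/continuant computation of $b'$ with $|bb'|_N=1$, you solve $bx\equiv s\pmod N$, $|s|\le\alpha$, directly and land on $x=\frac{\ell}{\alpha}N-\frac{sa}{\alpha}u-\frac{s\gamma}{\alpha}$ with $\ell$ pinned down by $\ell\beta\equiv s\pmod\alpha$ and $\gcd(\alpha,\beta)=1$; this is correct, and in fact your worry (a) about which of $s=\pm k$ puts $x$ in $[1,N/2]$ is moot, because the proposition only asks for the point in the form $(|x|_N,|ax|_N,|bx|_N)$ and $|x|_N$ is unchanged under $x\mapsto N-x$, so either sign of $s$ produces the same point and no counting argument is needed.

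The genuine flaw is the device you declare harmless: ``passing to residue classes of $t,u$ modulo a fixed integer,'' used for Type~1 (via $N\bmod 2a$) and implicitly for Type~2 (``in each fixed residue class of $t$, eventually stabilises''). This is not harmless for the statement at hand: the entire content of Proposition~\ref{ListCoords} is that $A_i,C_i,D_i$ do not depend on $t$ and $u$. Constants that are only constant on parity classes of $(t,u)$ prove a strictly weaker claim, and that weaker claim no longer supports Theorem~\ref{stabilization}: if $\sign(F_1-F_2)$ stabilises to different values along different residue classes, it is not asymptotically stable in the paper's sense, so the whole comparison machinery of Proposition~\ref{ModComparison} breaks. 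Fortunately the subdivision is also unnecessary, and the repair is to delete it rather than defend it. For Type~1, formula~\eqref{f1} uses $\left\lceil\frac{k}{2a}N\right\rceil$ only for even $k$ and $\left\lfloor\frac{k+1}{2a}N\right\rfloor$ only for odd $k$, so in both cases you are rounding $\frac{mN}{a}$ for an integer $m$, and only $N\equiv\beta\gamma+\alpha\pmod a$ enters --- never $N\bmod 2a$. For Type~2, your residual floor equals $\left\lfloor\frac{k\alpha}{a}-\frac{\alpha}{b}\left\{\frac{k\beta}{a}\right\}\right\rfloor$, and since $0\le\frac{\alpha}{b(t)}\left\{\frac{k\beta}{a}\right\}<\frac1a$ for all $t\ge1$, it equals one fixed constant determined by $(\alpha,\beta,a,k)$ for every $t\ge1$, with no ``eventual'' stabilisation (this is the same estimate the paper uses). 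With these two corrections your argument proves the proposition as stated in the regime $t\ge1$, which is also the regime of the paper's proof; the case $t=0$ is handled separately in Subsection~\ref{Conclusion of the proof}.
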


{\noindent
{\bf Example~\ref{m-ex}, case $(a=2, \alpha=7, \beta=2, \gamma=1)$, continued, part 3 of 4.}
According to Proposition~\ref{ListCoords} we have
$$
\begin{array}{ll}
\hbox{Type I:} & p_{I2}=(\frac{1}{2}N-\frac{1}{2}, 1, 7t{+}1),\\
\hbox{Type II:} &p_{II3}=(1, 2, 14t{+}2),\quad
                 p_{II4}=(2u{+}2, 4u{+}4, 14t-5),\\
&                p_{II6}=(\frac{1}{2}N-2u-\frac{1}{2}, 4u + 1, 7t {+} 8),\\
&                p_{II8}=(\frac{1}{2}N-2u-\frac{3}{2}, 4y{+}3, 7t-6),\\
\hbox{Type III:}&p_{III1}=(\frac{3}{7}N+\frac{2}{7}u+\frac{1}{7}, \frac{1}{7}N-\frac{4}{7}u-\frac{2}{7}, 1)\\
&                p_{III2}=(\frac17N-\frac47u-\frac27,\frac27N-\frac87u-\frac47, 2),\\
&                p_{III3}=(\frac27N+\frac67u+\frac37,\frac37N-\frac{12}{7}u-\frac67, 3),\\
&                p_{III4}=(\frac27N-\frac87u-\frac47,\frac37N+\frac{16}{7}y+\frac87,4),\\
&                p_{III5}=(\frac17N+\frac{10}{7}u+\frac57,\frac27N+\frac{20}{7}u+\frac{10}{7}, 5),\\
&                p_{III6}=(\frac37N-\frac{12}{7}u-\frac67,\frac17N+\frac{24}{7}u+\frac{12}{7}, 6),\\
&                p_{III7}=(2u{+}1, 4u{+}2, 7),\\
\hbox{Extra:}   &p_{01}=(N, 0, 0),\\
&                p_{02}=(0, N, 0),\\
&                p_{03}=(0, 0, N).
\end{array}
$$
(Recall that we have deleted 5 points from the list: repeating points and zeroes.)
Here we list the points before finally applying $|{*}|_N$ to every coordinate of every point.
The application of $|{*}|_N$ will affect some values of the coordinates for small $t$ and for small $u$.
For instance, if $t=0$ then $|14t-5|_N=|-5|_9=4$, and it is not $-5$, while for $t\ge1$ we have $|14t-5|_N=14t-5$.
}

\vspace{2mm}

We start the proof with the following lemma.

\begin{lemma}\label{ListCoordsLemma}
Consider a family of points with coordinates $(x(t,u),ax(t,u),bx(t,u))$.
Suppose that there exist an integer $A$ and rational numbers $C$ and $D$
such that for every $t$ and $u$ the first coordinate of the family satisfies
\begin{align}\label{f7}
   x(t,u)=\frac{A}{a}N+Cu+D,\ \text{for some}\ A\in\z.
\end{align}
Then the family satisfies condition~$($\ref{f777}$)$.
\end{lemma}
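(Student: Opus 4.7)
The plan is to derive the forms for $y$ and $z$ directly from the assumed expression $x(t,u) = \tfrac{A}{a}N + Cu + D$, exploiting the two defining identities $b(t) = \alpha a t + \beta$ and $N(t,u) = b(t)(au+\gamma) + \alpha$ to eliminate the unwanted mixed terms that inevitably appear in the expansion.

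The second coordinate is immediate: multiplying the assumed formula by $a$ gives $y = a\, x(t,u) = A N + (aC)u + (aD)$, which is already of the required shape $A_2 N + C_2 u + D_2$ with $A_2 = A$, $C_2 = aC$, $D_2 = aD$, all depending only on the combinatorial part.

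The substantive content is the third coordinate $z = b(t)\, x(t,u)$. Expanding gives
\[
z = \Bigl(\alpha At + \tfrac{A\beta}{a}\Bigr) N + \alpha aC\, tu + \beta C u + \alpha aD\, t + \beta D,
\]
which contains two terms that do not fit the required shape: the cross term $\alpha aC\,tu$ and the $u$-linear term $\beta C u$. To remove them I would rewrite the defining formula for $N$ as $\alpha a^2 tu = N - \alpha a\gamma t - a\beta u - \beta\gamma - \alpha$ and substitute the resulting expression $\alpha aC\,tu = \tfrac{C}{a}(N - \alpha a\gamma t - a\beta u - \beta\gamma - \alpha)$. The $-\beta C u$ produced by this substitution exactly cancels the existing $+\beta C u$, so $z$ takes the form $A_3'(t)\, N + C_3 t + D_3$ with constants $C_3 = \alpha(aD - C\gamma)$ and $D_3 = \beta D - (\alpha + \beta\gamma)C/a$, while the coefficient of $N$, namely $A_3'(t) = \alpha A t + (A\beta + C)/a$, still carries a $t$.

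The last step is to observe that only $|z|_N$ enters $p_s(t,u)$, so any integer multiple of $N$ may be subtracted from $z$ without changing the point. Removing $\alpha A t \cdot N$ absorbs the $t$-dependent piece of $A_3'(t)$ and leaves the constant coefficient $A_3 = (A\beta + C)/a$ in front of $N$, producing the required form. The main obstacle is algebraic bookkeeping: one must trust that the defining identity for $N$ is exactly the right tool to kill both the $tu$-term and the stray $u$-linear term in a single stroke. This is not a coincidence but is forced by the way the family $(b(t), N(t,u))$ is constructed from the combinatorial code, so once the substitution is carried out the cancellations are automatic and nothing further is required.
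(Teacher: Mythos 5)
Your proof is correct and follows essentially the same route as the paper: the paper also notes $y\equiv ax\equiv aCu+aD \pmod N$ and reduces $z\equiv bx \pmod N$ using $b(t)=\alpha a t+\beta$ and $N=b(t)(au+\gamma)+\alpha$, which is exactly your explicit substitution killing the $tu$- and $u$-terms, with the discarded multiples of $N$ harmless since only $|\cdot|_N$ enters $p_s(t,u)$.
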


\begin{proof}
The first coordinate satisfies condition~(\ref{f777}) by definition.
For the second and the third coordinates we have the following expressions:
\begin{align*}
   y&\equiv ax\equiv aCu+aD \quad \modd N;  \\
   z&\equiv bx\equiv \frac{A'}{a}N\beta+C(N-\alpha)+Db(t)\quad \modd N.
\end{align*}
Hence they also satisfy  condition~(\ref{f777}).
\end{proof}

Let us also recall the following definition.
\begin{definition}
{\it Continuants} $K_s$ ($s=0,1,\ldots$) are the polynomials that are defined iteratively as follows:
$$
\begin{array}{l}
    K_0()=1,\\
    K_1(x_1)=x_1,\\
    K_s(x_1,\ldots,x_n)=K_{n-1}(x_1,Е,x_{n-1})x_n+K_{n-2}(x_1,\ldots,x_{n-2}) \quad \hbox{for $n\ge 2$}.
\end{array}
$$
\end{definition}

{\noindent
{\it Proof of Proposition~\ref{ListCoords}.}
Let us fix some admissible $s_0$ and consider all the $s_0$-th entries in the lists $\Xi_{\alpha, \beta, \gamma, a}(t,u)$.
We study the points of three different types separately.
}

\vspace{1mm}

{
\noindent
{\it Points of the first type.} The first coordinates of the points of the first type are given by~\eqref{f1}.
Hence, equality~\eqref{f7} follows directly from the fact that
$
N\equiv(\beta\gamma+\alpha) \quad \modd a
$
and
$$
  \left\lfloor\frac{kN}{a}\right\rfloor=\frac{kN}{a}-\left\{\frac{k(\beta\gamma+\alpha)}{a}\right\}.
$$
}
Hence by Lemma~\ref{ListCoordsLemma} the points of the first type satisfy condition~\eqref{f777}.

{
\noindent
{\it Points of the second type.}
Consider now the points of the second type with even $k$ described by~\eqref{f2} (the case of odd $k$ is similar).
Notice that $\varepsilon$ contributes only to the constant $D$, so it is sufficient to study the case $\varepsilon=0$.
Since
$$
N\equiv\beta\gamma+\alpha \modd a \qquad  \hbox{and} \qquad \left\{\frac{k\beta}{a}\right\}au\in\z,
$$
we have
\begin{align*}
  \left[\frac{kN}{a}-\frac{N}{b}\left\{\frac{k\beta}{a}\right\}\right]&=
  k\frac{N-\beta\gamma-\alpha}{a}-\left\{\frac{k\beta}{a}\right\}au+
  \left[\frac{c(k)}{a}-\left\{\frac{k\beta}{a}\right\}\frac{\alpha}{b(t)}\right]       \\
 &=k\frac{N-\beta\gamma-\alpha}{a}-\left\{\frac{k\beta}{a}\right\}au+\left[\frac{c(k)-1}{a}\right],
\end{align*}
where $c(k)=(\beta\gamma+\alpha)k-a\gamma\left\{\frac{k\beta}{a}\right\}$ is an integer.
The last equality follows from the estimate $\frac{\alpha}{b(t)}<\frac{1}{a}$ for all $t\ge 1$.
}

So the first coordinates of the points of the second type satisfy equality~\eqref{f7}.
Hence by Lemma~\ref{ListCoordsLemma} the points of the first type satisfy the condition~\eqref{f777}.

\vspace{2mm}

{
\noindent
{\it Points of the third type.} Every point of the third type has the coordinates
$$
  (b'k,ab'k,k) \qquad \hbox{for $k\in\{1,2,\ldots,\alpha\}$,}
$$
where $b'$ satisfies $|bb'|_N=1$.
}

Let us find $b'$ explicitly. Consider the regular continued fraction expansion
$$
\frac{N}{b}=[a_0,\ldots, a_s].
$$
From the definition of $N$ and $b$ it follows that
$$
a_0= a u+\gamma, \quad
a_1=at+\Big\lfloor \frac{\beta}{\alpha}\Big\rfloor, \quad
\frac{\beta}{\alpha}-\Big\lfloor \frac{\beta}{\alpha}\Big\rfloor=[0,a_2,\ldots, a_s].
$$
Recall that
$$
\left(
\begin{array}{cc}
K_{s-1}(a_1,\ldots,a_{s-1})&K_{s}(a_1,\ldots,a_{s})\\
K_{s}(a_0,\ldots,a_{s-1})&K_{s+1}(a_0,\ldots,a_{s})\\
\end{array}
\right)
=
\left(
\begin{array}{cc}
K_{s-1}(a_1,\ldots,a_{s-1})&b\\
K_{s}(a_0,\ldots,a_{s-1})&N\\
\end{array}
\right)
$$
(here by $K_m$ we denote the corresponding continuant of degree $m$).
From general theory of continuants it follows that the determinant of the above matrix is $(-1)^s$, we have that
$$
|bK_{s}(a_0,\ldots,a_{s-1})|_N =1.
$$
Hence, without loss of generality we set
$$
b'=K_{s}(a_0,\ldots,a_{s-1}).
$$

Finally, let us examine the obtained expression for $b'$:
\begin{align*}
b'&=K_s(a_0,\ldots,a_{s-1})=a_0K_{s-1}(a_1,\ldots,a_{s-1})+K_{s-2}(a_2,\ldots,a_{s-1})\\
  &=(au+\gamma)(\zeta_1t+\zeta_2)+\zeta_3=\nu_1N+\nu_2u+\nu_3.
\end{align*}
where $\xi_i$ and $\nu_i$ ($i=1,2,3$) are constants.
Therefore, the first and the second coordinates of the points of the third type, which are equal to $kb'$ and $akb'$ respectively,
satisfy the conditions of the proposition. Finally, the third coordinates are constants equivalent to $k$ ($k=1,\ldots, \alpha$), and they satisfy
the conditions as well.
Therefore, the points of the third type satisfy the conditions of the proposition.
This concludes the proof.
\qed

\subsection{Asymptotic comparison of coordinate functions}\label{Asymptotic comparison of coordinate functions}

In this section we formulate a notion of asymptotic comparison and prove
two general statements that we will further use in the proof of Theorem~\ref{Conclusion of the proof}.

\begin{definition}
We say that a function $L:\z_+^2\to \r$
is {\it asymptotically stable}
if there exist numbers $\widehat{t}$ and $\widehat{u}$, such that the following
conditions hold:

\begin{itemize}
\item {\it $(t,u)$-stability condition}: for every $t\ge\widehat{t},\ u\ge\widehat{u}$ it holds
$L(t,u)=L(\widehat{t},\widehat{u})$;

\item {\it $u$-stability condition}: for every $t_0< \widehat{t}$ and every $u\ge\widehat{u}$ it holds
$L(t_0,u)=L(t_0,\widehat{u})$;

\item {\it $t$-stability condition}: for every $u_0< \widehat{u}$ and every $t\ge\widehat{t}$ it holds
$L(t,u_0)=L(\widehat{t},u_0)$.
\end{itemize}
\end{definition}

\begin{definition}
Two functions $F_1$ and $F_2$ are called {\it asymptotically comparable} if
the function
$\sign(F_1{-}F_2)$ is asymptotically stable.
\end{definition}

Let us continue with the following general statement.

\begin{proposition}\label{ModuleReduction}
Let $A$, $B$, $D$  be arbitrary integer numbers. Set
$$
F(u,t)=AN+Bt+D.
$$
Then the following statements hold.

\vspace{1mm}

{\noindent
{\it$($i$)$} There exist real numbers $A'$, $B'$, $D'$, and $\widehat{u}$ such that
for every $t\ge 0$ and $u>\widehat{u}$ we have
$$
|F(u,t)|_N=A'N+B't+D'.
$$
}

\vspace{1mm}

{\noindent
{\it$($ii$)$} For every $u_0\ge 0$ there exist real numbers $B''$, $D''$, and $\widehat{t}$ such that
for every $t>\widehat{t}$ we have
$$
|F(u,t)|_N=B''t+D''.
$$
}
\end{proposition}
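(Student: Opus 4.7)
The underlying observation is that $F(u,t) \equiv Bt + D \pmod{N(t,u)}$, so computing $|F(u,t)|_N$ reduces to finding the absolute least residue of the linear expression $Bt + D$ modulo the polynomial $N(t,u) = (\alpha a t + \beta)(a u + \gamma) + \alpha = \alpha a^2 t u + \alpha a \gamma t + \beta a u + \beta \gamma + \alpha$. The plan is to treat the two parts separately, in each case exploiting that $N(t,u)$ dominates $|Bt+D|$ once the appropriate parameter is sufficiently large, so that the reduction mod $N$ becomes trivial or a single Euclidean step.

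For part (i), I would first choose $\widehat{u}$ so that $\alpha a^2 \widehat{u} + \alpha a \gamma > 2|B|$ and $\beta a \widehat{u} + \beta \gamma + \alpha > 2|D|$. A direct comparison of coefficients then gives
\[
N(t,u) - 2|Bt+D| \ge (\alpha a^2 u + \alpha a \gamma - 2|B|)\,t + (\beta a u + \beta \gamma + \alpha - 2|D|) > 0
\]
for every $t \ge 0$ and $u > \widehat{u}$, which forces $|Bt + D| < N/2$ and hence $|F(u,t)|_N = |Bt + D|$. Once the sign of $Bt + D$ is fixed this equals $\pm(Bt + D)$ and fits the required shape $A'N + B't + D'$ with $A' = 0$.

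For part (ii), I would fix $u = u_0$, so that $N(t, u_0) = M_0 t + L_0$ is linear in $t$ with $M_0 = \alpha a (a u_0 + \gamma)$ and $L_0 = \beta(a u_0 + \gamma) + \alpha$. Taking $q$ to be the integer part of $B/M_0$ (with the appropriate sign convention), the Euclidean quotient $\lfloor (Bt+D)/N(t,u_0)\rfloor$ stabilizes at $q$ for $t > \widehat{t}$ large, and
\[
(Bt + D) - q \cdot N(t, u_0) = (B - q M_0)\, t + (D - q L_0)
\]
is the unique representative of $Bt + D$ modulo $N$ lying in $[0,N)$. Comparing this quantity with $N$ minus itself (also linear in $t$) and taking the smaller of the two produces the required form $B'' t + D''$, with constants depending on $u_0$ through $M_0$ and $L_0$.

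The hard part will be handling the sign-change degeneracies: in (i), if $Bt + D$ changes sign for some $t \ge 0$, the function $|Bt + D|$ is V-shaped rather than linear, and in (ii), when $B$ falls at a half-integer multiple of $M_0$ the ``smaller of two linear functions'' switches branches at some threshold $t$. Both pathologies can be avoided by invoking the specific structure of the $F$'s furnished by Proposition~\ref{ListCoords}, where the coefficients $A,B,D$ are constrained by the integer arithmetic of $\Gamma(a,b,N)$ in a way that fixes the relevant sign on the domain of interest.
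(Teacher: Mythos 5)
Your part (ii) is essentially the paper's own argument in different clothing: the paper fixes $u_0$, writes $F(t,u_0)=Pt+Q$ and $N=N_1(u_0)t+N_2(u_0)$, brings the slope into the range $(-N_1/2,\,N_1/2]$ by adding or subtracting $N$ finitely many times (your Euclidean quotient $q$ is exactly that number of subtractions), and then chooses between the remainder and $N$ minus the remainder by comparing two affine functions of $t$. Your worry about the ``half-integer'' case is harmless: if the reduced slope equals $M_0/2$ the two candidate branches are parallel and the choice is decided once and for all by the constants, while if the slopes differ the choice stabilizes beyond the crossing point, which is all that $t>\widehat t$ requires.

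Part (i) is where you diverge from the paper, and where your write-up has the genuine gap. Your opening move $F\equiv Bt+D\pmod{N}$ uses the integrality of $A$. The statement does say ``integer'', but the paper's own proof never uses this: it runs a case analysis on the value of $A$ as a real number (including the case $A=1/2$) and reduces general $A$ by integer multiples of $N$, because in the intended application (Lemma~\ref{ListCoordsLemma}, Proposition~\ref{ListCoords}) the coefficient of $N$ is $A/a$, typically not an integer; so your shortcut covers the statement as printed but not the situation the proposition is actually invoked for. More importantly, your reduction puts every case into the configuration where the coefficient of $N$ is zero, and there the conclusion genuinely requires $Bt+D$ to have constant sign on $t\ge 0$: if $B$ and $D$ have opposite signs, $|Bt+D|$ is V-shaped, and since the left-hand side is independent of $u$ while $N(t,u)$ is not, one is forced to take $A'=0$, so no function $A'N+B't+D'$ can agree with it on all of $t\ge 0$. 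Deferring this to ``the structure furnished by Proposition~\ref{ListCoords}'' is not an argument --- it is exactly the step a proof of (i) must supply. Note that when the reduced coefficient of $N$ is nonzero, the term $AN$ dominates for large $u$ and fixes the sign automatically, which is why the paper's case analysis largely sidesteps the issue; to be fair, in the remaining case the paper, too, silently writes $|F|_N=F$ as soon as $|F|<N/2$ without checking the sign, so it shares this lacuna --- but in your proposal it is the one step explicitly left open.
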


\begin{remark}
It is clear that similar statements hold for the functions of type
$$
F(u,t)=AN+Cu+D
$$
(one should swap $t$ and $u$ in the conditions).
The proof in these settings repeats the proof of Proposition~\ref{ModuleReduction}, so we omit it.
\end{remark}

{
\noindent
{\it Proof of Proposition~\ref{ModuleReduction} $($i$)$.}
First if $|A|<1/2$, then there exists $\widehat{u}$ such that for every $u>\widehat{u}$ we have
$|F(t,u)|<N/2$. Therefore, for every $u>\widehat{u}$ we get
$$
|F(t,u)|_N=F(t,u).
$$
}

Secondly, let $A=1/2$. If $B< 0$ or $B=0$ and $D\le 0$ then there exists $\widehat{u}$ such that
for every $u>\widehat{u}$ we have $0<F(t,u)\le N/2$ and hence
$$
|F(t,u)|_N=F(t,u).
$$
If $B>0$ or $B=0$ and $D\ge 0$ then there exists $\widehat{u}$ such that
for every $u>\widehat{u}$ we have $N/2\le F(t,u)\le N$ and hence
$$
|F(t,u)|_N=N-F(t,u).
$$

Finally, the cases when $A\le -1/2$ or $A>1/2$ are reduced to the above two cases by adding or subtracting the number $N$ several times.
Then the statement follows directly from the fact that $|F(t,u)\pm N|_N=|F(t,u)|_N$.
\qed

\vspace{2mm}

{
\noindent
{\it Proof of Proposition~\ref{ModuleReduction} $($ii$)$.}
Let us fix $u_0\ge 1$. In this case we consider the function $F$ as a function in one variable $t$.
We write
$$
F(t,u_0)=Pt+Q,
$$
for some real numbers $P$ and $Q$. Let also
$$
N=N_1(u_0)t+N_2(u_0).
$$
}

If $|P|<N_1(u_0)/2$ then there exists $\widehat{t}$ such that for every $t>\widehat{t}$ we have
$$
|F(t,u_0)|_N=F(t,u_0).
$$
Consider now the case $P=N_1(u_0)/2$. If $Q\le N_2(u_0)/2$ then there exists $\widehat{t}$ such that for every $t>\widehat{t}$ we have
$0<F(t,u_0)\le N/2$ and hence
$$
|F(t,u_0)|_N=Pt+Q.
$$
If $Q> N_2(u_0)/2$ then there exists $\widehat{t}$ such that for every $t>\widehat{t}$ we have
$N/2<F(t,u_0)\le N$ and hence
$$
|F(t,u_0)|_N=N-Pt-Q.
$$

Finally, the cases $P\le -N_1(u_0)/2$ or $P> -N_1(u_0)/2$ are reduced to the above cases by adding or subtracting
the number $N$ several times.
Then the statement follows directly from the fact that $|F(t,u_0)\pm N|_N=|F(t,u_0)|_N$.
\qed

\vspace{2mm}

In order to compare the coordinates of the points in the lists $\Xi_{\alpha, \beta, \gamma, a}(t,u)$ we
formulate and prove the following statement.

\begin{proposition}\label{ModComparison}
Let $F_1$ and $F_2$ be a pair of functions of two variables as in one of the following cases:

\vspace{1mm}
{\noindent
\quad $($i$)$ \quad $F_1(t,u)=A_1N+B_1t+D_1, \qquad F_2(t,u)=A_2N+B_2t+D_2$;
}

{\noindent
\quad $($ii$)$ \quad $F_1(t,u)=A_1N+C_1u+D_1, \qquad F_2(t,u)=A_2N+C_2u+D_2$.
}
\vspace{1mm}

{\noindent
Then the functions $|F_1|_N$ and $|F_2|_N$ are asymptotically comparable.
}
\end{proposition}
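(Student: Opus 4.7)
The plan is to reduce $|F_1|_N$ and $|F_2|_N$ to explicit affine expressions via Proposition~\ref{ModuleReduction}, and then analyze the sign of their difference in each of the three stability regimes. I treat case (i) only; case (ii) follows by the symmetric argument with $t$ and $u$ swapped, as the remark after Proposition~\ref{ModuleReduction} indicates.

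For the $(t,u)$-stability condition, I apply Proposition~\ref{ModuleReduction}(i) separately to $F_1$ and $F_2$, obtaining constants $A_i',B_i',D_i'$ and a threshold $\widehat{u}_0$ such that for all $t\ge 0$ and $u>\widehat{u}_0$,
\[
|F_i|_N = A_i'N + B_i' t + D_i',\qquad i=1,2.
\]
Subtraction yields
\[
|F_1|_N-|F_2|_N = (A_1'-A_2')N + (B_1'-B_2')t + (D_1'-D_2').
\]
Because $N=\alpha a^2 tu+\alpha a\gamma t+\beta a u+\beta\gamma+\alpha$ grows bilinearly in $(t,u)$, the sign of this expression is governed by the first nonzero entry in the ordered list $(A_1'-A_2',\;B_1'-B_2',\;D_1'-D_2')$: if $A_1'\ne A_2'$ the $N$-term dominates once $u$ is large, giving sign $\sgn(A_1'-A_2')$; if only $B_1'-B_2'\ne 0$ the sign stabilizes to $\sgn(B_1'-B_2')$ for $t$ large; otherwise the sign is the constant $\sgn(D_1'-D_2')$. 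Taking $\widehat{t},\widehat{u}$ large enough forces the dominant term to beat the remaining ones, so the difference has constant sign on $\{t\ge\widehat{t},\,u\ge\widehat{u}\}$.

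For the $t$-stability condition, fix an arbitrary $u_0<\widehat{u}$ and view $F_i$ as a function of $t$ alone. Proposition~\ref{ModuleReduction}(ii) produces constants $B_i''$, $D_i''$ and a threshold $\widehat{t}(u_0)$ such that for $t>\widehat{t}(u_0)$,
\[
|F_i(t,u_0)|_N = B_i''t+D_i'',\qquad i=1,2,
\]
so the difference is affine in $t$ and its sign stabilizes as $t\to\infty$. Since there are only finitely many integers $u_0\in\{0,1,\ldots,\widehat{u}-1\}$, enlarging $\widehat{t}$ to the maximum of these finitely many thresholds $\widehat{t}(u_0)$ secures the $t$-stability condition. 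The $u$-stability condition is handled in the same way by invoking the variant of Proposition~\ref{ModuleReduction} with $t$ and $u$ interchanged (as the remark states): for fixed $t=t_0$ the quantity $N$ is affine in $u$, hence so is each $F_i$, and the analogous reduction produces an affine expression in $u$ whose sign stabilizes for large $u$. Enlarging $\widehat{u}$ to absorb the finitely many new thresholds completes the argument.

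The main obstacle is the bookkeeping of degenerate subcases: whenever a leading coefficient of the reduced difference vanishes, one must descend to the next coefficient and verify that the chosen thresholds still force the sign, and these thresholds must be compatible across all three stability conditions simultaneously. All the required asymptotic reductions are already furnished by Proposition~\ref{ModuleReduction}, so the remaining work is organizational rather than analytical.
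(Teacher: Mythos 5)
Your proposal is correct and follows essentially the same route as the paper: reduce $|F_1|_N$ and $|F_2|_N$ to affine expressions via Proposition~\ref{ModuleReduction}, then stabilize the sign of the affine difference. The only real divergence is in how the comparison of the reduced forms is organized: the paper isolates this as Lemma~\ref{ModComparisonLemma} (the sign of $AN+Bt+D$ is asymptotically stable because its zero set is a hyperbola with asymptotes parallel to the coordinate axes), which yields the $(t,u)$- and $u$-stability conditions simultaneously with a threshold in $u$ that is uniform over all $t_0$; you instead prove quadrant constancy by explicit coefficient dominance using $N=\alpha a^2tu+\alpha a\gamma t+\beta au+\beta\gamma+\alpha$, and then obtain $u$-stability by invoking the swapped variant of Proposition~\ref{ModuleReduction} separately at each of the finitely many $t_0<\widehat{t}$. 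That works, but the closing ``compatibility of thresholds'' point deserves one explicit sentence rather than being labelled organizational: enlarging $\widehat{u}$ for the $u$-stability step adds new values $u_0$ to the $t$-stability condition (and symmetrically for $\widehat{t}$), and the apparent back-and-forth terminates only because any parameter value lying between the original quadrant thresholds $(\widehat{t}_1,\widehat{u}_1)$ and the enlarged ones is already covered by the quadrant constancy you established first, so one round of enlargement suffices. With that observation added (or with the paper's uniform hyperbola lemma in place of the per-$t_0$ reduction), your argument is complete; note also that your dominance analysis for $\Delta A'\ne 0$ in fact gives sign constancy on a whole half-plane $u\ge u^{*}$ uniformly in $t$, which is exactly what makes this repair painless.
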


\begin{example}
Let us show a comparison for the coordinates of Example~\ref{m-ex} (we have obtained them in part 2 of 4).
We compare the expressions for $x$ coordinates of $p_{III2}$ and $p_{III7}$, which are as follows:
$$
x_1=|4tu+2t+1|_N\quad  \hbox{and}\quad x_2=|2u+1|_N,
$$
where $N=28tu+14t+4u+9$.

We have the following three distinct cases here

\vspace{1mm}

\begin{itemize}
\item If $t=u=0$, then $x_1-x_2=0$.

\item If $t=0$ and $u>0$, then $x_1-x_2=-2u<0$.

\item If $t\ge 1$ and $u\ge 1$ then $x_1-x_2=(4t-2)u+2t>0$.
\end{itemize}

\vspace{1mm}

{
\noindent
As we see the expressions for $x_1$ and $x_2$ are asymptotically comparable.
All the other comparisons of Example~\ref{m-ex} are similar and we skip them here.
}
\end{example}

Without loss of generality we restrict ourselves to the first item (the proof for the second item repeats the proof for the first one).
We start the proof with the following lemma.

\begin{lemma}\label{ModComparisonLemma}
The functions $F_1$ and $F_2$ are asymptotically comparable.
\end{lemma}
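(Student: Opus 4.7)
The plan is to analyze $G(t,u) := F_1(t,u) - F_2(t,u)$ by substituting the explicit expression for $N(t,u) = (\alpha a t + \beta)(au + \gamma) + \alpha = \alpha a^2 tu + \alpha a \gamma t + \beta a u + \beta\gamma + \alpha$. Setting $\Delta A = A_1 - A_2$, $\Delta B = B_1 - B_2$, $\Delta D = D_1 - D_2$, one obtains
\[
G(t,u) = \Delta A \cdot \alpha a^2 \cdot tu + (\Delta A \cdot \alpha a \gamma + \Delta B)\, t + \Delta A \cdot \beta a \cdot u + \Delta A (\beta\gamma + \alpha) + \Delta D,
\]
which is a polynomial of total degree at most $2$ in $(t,u)$, whose coefficients depend only on the fixed data $(\alpha,\beta,\gamma,a,A_i,B_i,D_i)$.

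I would then split on whether the leading coefficient $\Delta A$ of $tu$ vanishes. If $\Delta A = 0$, then $G$ reduces to $\Delta B t + \Delta D$, which does not depend on $u$; its sign is either globally constant (when $\Delta B = 0$) or eventually equal to $\sign \Delta B$ (when $\Delta B \neq 0$), so all three stability conditions are immediate with a single threshold $\widehat t$. If $\Delta A \neq 0$, then for each fixed $t \ge 0$ the slice $u \mapsto G(t,u)$ is affine with slope $\Delta A \cdot a \cdot b(t) = \Delta A \cdot a \cdot (\alpha a t + \beta)$, which is nonzero and has sign equal to $\sign \Delta A$ uniformly in $t \ge 0$. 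Symmetrically, for fixed $u$, the slice $t \mapsto G(t,u)$ is affine with slope $\Delta A \alpha a (au + \gamma) + \Delta B$, which is nonzero and has sign $\sign \Delta A$ for all sufficiently large $u$.

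With this in hand I would choose $\widehat t$ and $\widehat u$ in two stages. First pick $\widehat t_0, \widehat u_0$ large enough that, for all $t \ge \widehat t_0$, the $u$-slope of $G(t,\cdot)$ has the sign of $\Delta A$, and, for all $u \ge \widehat u_0$, the $t$-slope of $G(\cdot,u)$ has the sign of $\Delta A$; this handles the $(t,u)$-stability region. Then, since each slice $G(t_0,\cdot)$ and $G(\cdot,u_0)$ is affine with non-vanishing slope of fixed sign, its sign is constant on a tail of $u$'s (respectively $t$'s). Taking the maximum of these tail-thresholds over the finitely many integers $t_0 \in \{0, 1, \ldots, \widehat t_0 - 1\}$ yields the required $\widehat u \ge \widehat u_0$; symmetrically for $\widehat t$. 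Then the common corner sign $\sign \Delta A$ simultaneously satisfies $(t,u)$-, $t$-, and $u$-stability.

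The main obstacle is bookkeeping rather than analytic: one must ensure that a single pair $(\widehat t, \widehat u)$ realises all three stability conditions at once. This is resolved by exploiting the fact that only finitely many exceptional integer values $t_0 < \widehat t_0$ and $u_0 < \widehat u_0$ need be handled once $\widehat t_0, \widehat u_0$ are fixed, so the required thresholds can be taken as their maxima. The degenerate case $\Delta A = 0$ is genuinely simpler and involves no maximisation in the $u$-direction at all.
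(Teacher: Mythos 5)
Your proposal is correct and follows essentially the same route as the paper: reduce to the sign of $F_1-F_2=AN+Bt+D$, treat the case $A=0$ (affine in $t$, independent of $u$) separately, and for $A\neq 0$ observe that after substituting $N(t,u)$ the difference is a bilinear function whose zero set is a hyperbola with asymptotes parallel to the coordinate axes. Your affine-slice analysis and the threshold bookkeeping simply make explicit what the paper dismisses as following ``directly from definition.''
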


\begin{proof}
By the definition it is sufficient to show that the function $F=F_1-F_2$ is comparable with the zero function.
So let
$$
F(t,u)=AN+Bt+D.
$$

If $A=0$ then for $t>-D/B$ the function $F$ does not change its sign, for $t<-D/B$ the function $F$ does not change sign,
and for any $u$ we have $F(-D/B,u)=0$. Hence $F$ is asymptotical comparable with the zero function.

If $A\neq 0$ then the equation $F(t,u)=0$ defines a hyperbola on $(t,u)$-plane, whose asymptotes are parallel to coordinate axes.
Hence we have the asymptotic comparison of $F$ and the zero function directly from definition
(notice that we essentially use the fact that the function is defined over $\z_+^2$).
\end{proof}

{
\noindent {\it Proof of Proposition~\ref{ModComparison}.}
{\it Verification of $(t,u)$-stability and $u$-stability.} We show $(t,u)$-stability and $u$-stability condition
of the function $\sign(|F_1|_N-|F_2|_N)$ simultaneously.
By Proposition~\ref{ModuleReduction} $($i$)$ we know that there exists $\widehat{u}_0$ such that
for every $t\ge 1$ and $u>\widehat{u}_0$ both functions $|F_1|_N$ and $|F_2|_N$ are written as
$$
|F_1(u,t)|_N=A'_1N+B'_1t+D'_1 \quad \hbox{and} \quad |F_2(u,t)|_N=A'_2N+B'_2t+D'_2.
$$
By Lemma~\ref{ModComparisonLemma}, such functions are asymptotically comparable.
Hence there exist some constants $\widehat{u}_1$ and $\widehat{t}_1$ satisfying simultaneously
$(t,u)$-stability and $u$-stability condition.
}

\vspace{2mm}

{
\noindent
{\it Verification of $t$-stability.}
Similarly from Proposition~\ref{ModuleReduction} $($ii$)$ follows the existence of
the constant $\widehat{t}_2$ satisfying $t$-stability condition for every $u_0<\widehat{u}_1$.
}

\vspace{1mm}

Finally, the constants $\widehat{t}=\max(\widehat{t}_1,\widehat{t}_2)$ and $\widehat{u}=\widehat{u}_1$
satisfy all three stability conditions. Therefore, $F_1$ and $F_2$ are asymptotically stable.
This concludes the proof of Proposition~\ref{ModComparison} {\it$($i$)$}.

\vspace{2mm}

As we have already mentioned the proof of Proposition~\ref{ModComparison} {\it$($ii$)$} is similar and we omit it.
\qed

\subsection{Conclusion of the proof of Theorem~\ref{stabilization}}\label{Conclusion of the proof}

First of all we fix the combinatorial type $(\alpha,\beta,\gamma,a)$. Suppose that $t\ge 1$.
By Corollary~\ref{ListVoronoi}{\it$($i$)$} every relative minimum of $|\Gamma(a,b,N)|$ is contained in the list $\Xi_{\alpha, \beta, \gamma, a}(t,u)$.
Further by Proposition~\ref{ListCoords}
for every admissible $s$ the $s$-th entries $p_s(t,u)$ in the lists $\Xi_{\alpha, \beta, \gamma, a}(t,u)$ are written as
$$
      p_s(t,u)=\big(\big|A_1N+C_1u+D_1\big|_N, \big|A_2N+C_2u+D_2\big|_N, \big|A_3N+C_3t+D_3\big|_N\big), \quad t\ge 1, u\ge 1.
$$
Hence by Proposition~\ref{ModComparison} for every admissible $s_1$ and $s_2$ each coordinate of
$p_{s_1}(t,u)$ is asymptotically comparable (with respect to $t$ and $u$) with the corresponding
coordinate of $p_{s_2}(t,u)$.
In other words, starting from some positive integers we have $(t,u)$-, $t$-, and $u$-stabilizations of all inequalities for all the coordinates
for all the corresponding pairs of points in the lists (it is important that the number of points in every list is exactly $\alpha+5a+3$).
Recall that every Minkowski-Voronoi complex is defined completely by the inequalities between the same coordinates of different points.
Therefore, the family of Minkowski-Voronoi complexes $MV\big(\Gamma\big(2,b(t),N(t,u)\big)$ is $(t,u)$-, $t$-, $u$-stable.

If $t=0$ then $b=\beta$ is a constant and
$N=\beta(au+\gamma)+\alpha$. In this case instead of partition~\eqref{parti}
we divide $I_k$ in a simpler way:
$$
I_k=I_{k,1}\bigcup I_{k,3},
$$
where for even $k$ we set
\begin{gather*}
  I_{k,1}=\left [\frac{k}{2a}N,\frac{k}{2a}N+1\right), \quad
  I_{k,3}=\left [\frac{k}{2a}N+1,\frac{k+1}{2a}N\right),
\end{gather*}
and for odd $k$ we set
\begin{gather*}
  I_{k,1}=\left [\frac{k+1}{2a}N-1,\frac{k+1}{2a}N\right), \quad
  I_{k,3}=\left [\frac{k}{2a}N,\frac{k+1}{2a}N-1\right).
\end{gather*}
On the intervals $I_{k,3}$ instead of~\eqref{f4} we shall have the inequality $ |bx|_N\le
\beta$. As before it means that $
|bx|_N$ on the interval $I_{k,3}$ is bounded by an absolute constant and the rest part of the proof remains the same.
\qed

\vspace{2mm}

{\noindent
{\bf Example~\ref{m-ex}, case $(a=2, \alpha=7, \beta=2, \gamma=1)$, continued, part 4 of 4.}
In our case we have the following diagrams.
\begin{center}
$\includegraphics{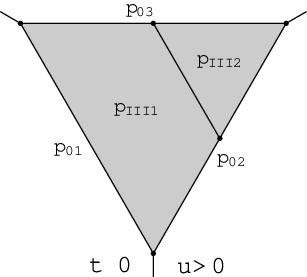}$
\quad
$\includegraphics{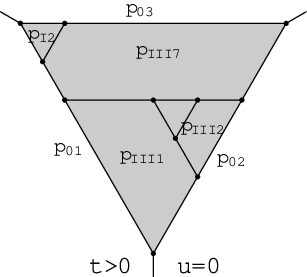}$
\quad
$\includegraphics{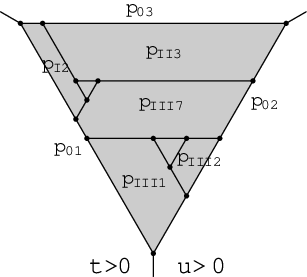}$
\end{center}
In the middle of each face we write the corresponding relative minimum.
Here $\circ=p_{III6}$ and $*=p_{II6}$. The construction is completed.
}


\section{A few words about lattices $\Gamma(a,b,N)$ with small $a$}

\subsection{Alphabetical description of canonical diagrams for lattices $\Gamma(a,b,N)$ with small $a$}

In this subsection we say a few words about lattices with a small parameter $a$.
Observed experiments suggest that canonical diagrams of such lattices are rather simple,
there is a good way to describe their combinatorics.
In order to do this we cut a diagram along the parallel lines  in the horizontal direction, as it is shown in the example below.

$$
\begin{array}{ccccccc}
\begin{array}{c}\includegraphics{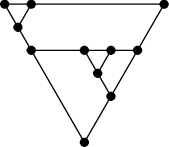}\end{array}
&\to &
\begin{array}{c}\includegraphics{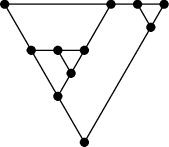}\end{array}
&\to &
\begin{array}{c}\includegraphics{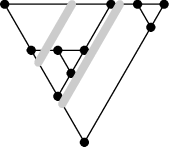}\end{array} &\to &
\begin{array}{c}\includegraphics{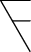} \hspace{1mm} \includegraphics{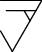} \hspace{1mm} \includegraphics{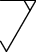}\end{array}
\end{array}
$$
In this example

--- first, we consider a canonical diagram for some $S$ (the first picture from the left);

--- then we rotate it by $\frac{2\pi}{3}$ clockwise (the second picture from the left);

--- further we cut it in several parts by parallel cuts (the third picture from the left);

--- finally, we redraw it in the symbolic form (the last picture from the left).

{\noindent
In some sense each diagram is written as a word in special letters,
encoding the combinatorics of the obtained pieces after performing cuts.
We choose the letters in the word to be similar to the corresponding parts
(after the rotation by the angle $\frac{2\pi}{3}$).
}

Let us discuss diagram decompositions in the simplest case of $a=1,2$.

\subsection{The case of lattices $\Gamma(1,b,N)$}\label{Case1bN}

We start with the case $a=1$. It turns out that in this case every canonical diagram is represented by a word consisting
of two letters ``$\includegraphics{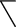}$'' and ``$\includegraphics{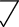}$''.

\begin{theorem}\label{White}
Let $b$ and $N$ be relatively prime positive integers, such that $b\le\frac{N}{2}$.
Then the canonical diagram of the set $|L(1,b,N)|$ is defined by the following word:
$$
\includegraphics{let_sm-1}\hspace{5.mm}
\includegraphics{let_sm-11} \hspace{5.mm}
\includegraphics{let_sm-11}\hspace{5.mm}
\ldots
\hspace{5.mm}\includegraphics{let_sm-11},
$$
where the number of letter ``$\hspace{5.mm}\includegraphics{let_sm-11}$'' equals to the number of elements in the shortest regular continued fractions of $\frac{N}{b}$.
\end{theorem}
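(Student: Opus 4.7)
The plan is to reduce Theorem~\ref{White} to the classical (one-dimensional) theory of continued fractions by exploiting the special structure of the lattice $\Gamma(1,b,N)$, and then to feed the resulting sparse set of relative minima into the construction algorithm of Section~\ref{Canonical diagrams}.

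First I would exploit the feature $a=1$. Any element of $\Gamma(1,b,N)$ has the form
$(m_1,\,m_1+m_2N,\,m_1b+m_3N)$,
so after taking componentwise absolute values its first two coordinates coincide whenever $|m_1|\le N/2$. Hence every relative minimum of $|\Gamma(1,b,N)|$ outside the three coordinate axes lies on the diagonal plane $x_1=x_2$, and is uniquely parametrised by an integer $x\in\{1,\dots,\lfloor N/2\rfloor\}$ via
$P(x)=(x,\,x,\,|bx|_N)$.
The problem of finding which $x$ give relative minima therefore collapses to a two-dimensional one: we must decide which points $(x,|bx|_N)$ are relative minima of the plane lattice generated by $(1,b)$ and $(0,N)$. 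Classical theory of best approximations identifies these exactly with the denominators $q_0<q_1<\dots<q_{s}$ of the principal convergents of the shortest continued fraction expansion $N/b=[a_0;a_1,\dots,a_s]$ (with the usual convention $a_s\ge 2$). Adding the three axial minima $A_1=(N,0,0)$, $A_2=(0,N,0)$, $A_3=(0,0,N)$ then gives the full list $\vrm(|\Gamma(1,b,N)|)$.

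Next I would compute the minimal $3$-subsets. Because all interior minima $P_k=(q_k,q_k,r_k)$ (where $r_k=|bq_k|_N$) lie on the diagonal $x_1=x_2$ with $q_k$ strictly increasing and $r_k$ strictly decreasing, no three interior minima can be mutually minimal: the box $\Pi(\{P_{k_1},P_{k_2},P_{k_3}\})$ always contains an intermediate $P_{k_2}$. The only possible minimal triples are therefore of the form $\{P_k,P_{k+1},A_j\}$ and the three ``corner'' triples involving two axial minima. A direct verification, using that consecutive convergents satisfy $q_{k+1}r_k+q_kr_{k+1}=N$ (so $\Pi(\{P_k,P_{k+1}\})$ lies just inside the fundamental box), will produce the complete list of vertices of $MV(\Gamma(1,b,N))$. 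The Voronoi graph that results is essentially a chain whose length is governed by $s$, with a small ``head'' attached where the axial minima come together and a ``tail'' attached to $A_3$.

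Finally I would run the canonical-diagram algorithm of Section~\ref{Canonical diagrams} on this chain and inspect its edge labels. The initial ascending path starts at $v_B=\{A_1,A_2,P_0\}$ and moves along the corner triples and the first few $\{P_k,P_{k+1},A_j\}$; the faces it first encloses together carry the combinatorics of the opening letter $\includegraphics{let_sm-1}$. Each further transition from the pair $(P_k,P_{k+1})$ to $(P_{k+1},P_{k+2})$ produces, after assigning coordinates via the recursion step, a pair of triangular faces glued along an edge in direction $e_2$ or $-e_3$, i.e., exactly one copy of $\includegraphics{let_sm-11}$; the chirality of the glued pair alternates with the parity of $k$, which is encoded in the paper's single pictogram. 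Cutting the finished diagram by the horizontal lines of the statement then reads off the word $\includegraphics{let_sm-1}\,\includegraphics{let_sm-11}\,\dots\,\includegraphics{let_sm-11}$ with exactly $s$ copies of $\includegraphics{let_sm-11}$, one per partial quotient of the shortest expansion of $N/b$.

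The main obstacle I expect is step two, the verification of relative minimality of triples in the $3$-dimensional box sense. The one-dimensional best-approximation argument handles $2$-element subsets, but the third coordinate $r_k$ interacts nontrivially with the first two, and one must rule out that a later $P_m$ with $m\gg k$ enters the box $\Pi(\{P_k,P_{k+1},A_j\})$ through the axial direction. The relation $q_{k+1}r_k+q_kr_{k+1}=N$ and the monotonicity of $(q_k,r_k)$ should make this manageable, but the boundary cases—the head containing the three axial minima (which provides the single $\includegraphics{let_sm-1}$) and the termination at $A_3=(0,0,N)$ when $r_s=0$—will require separate bookkeeping to ensure that the first and last letters of the word come out correctly.
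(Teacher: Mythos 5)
Your proposal follows essentially the same route as the paper: reduce to the planar lattice generated by $(1,b)$ and $(0,N)$ via the coincidence of the first two coordinates, identify the off-axis relative minima with the convergents of the continued fraction of $N/b$ (the paper quotes Voronoi's continuant formulas for this), list the minimal triples formed by consecutive interior minima together with the axial ones, and read off the diagram. The paper is no more detailed than you are at the final combinatorial step (it ends with ``direct calculations show\ldots''), so your sketch is an adequate match.
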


\begin{remark}
The lattices of Theorem~\ref{White} have a remarkable property. They enumerate all lattice tetrahedra whose interiors do not contain lattice points.
This classification was considered for the first time by G.K.~White in~\cite{WhiteCite}.
\end{remark}

\begin{proof}
The proof is based on all Voronoi relative minima enumeration.

Consider an ordinary continued fraction for $b/N$:
$$
  \frac{b}{N}=[0;a_1,\ldots,a_s]\qquad (a_1\ge 2,\ a_s\ge 2).
$$
Then, as it is shown by G.F.~Voronoi~\cite{Voronoi1896,Voronoi1952} all relative minima of the two-dimensional lattice generated by the vectors $(N,0)$ and $(1,b)$
(we denote this lattice by $\Gamma(b,N)$)
are of the form
\begin{align*}
   \gamma_j=(x_j,y_j)=\left((-1)^{j+1}K_{j-1}(a_1,\ldots,a_{j-1}),K_{s-j}(a_{j+1},\ldots,a_s)\right)
   \quad (0\le j\le s+1),
\end{align*}
In particular, we have  $\gamma_0=(0,N)$, $\gamma_1=(1,b)$, $\gamma_{s+1}=(\pm N,0)$.

Set
$$
\tilde \gamma_j=(|x_j|,|x_j|,|y_j|), \quad \hbox{where $\gamma_j=(x_j,y_j)$}.
$$

Recall that the lattice $L(1,b,N)$ is generated by $(1,1,b)$, $(0,N,0)$, and $(0,0,N)$.
Notice that the set of Voronoi relative minima $\vrm(|L(1,b,N)|)$ contains both $(0,N,0)$ and $(0,0,N)$. All the other
Voronoi relative minima are the points of type
$$
(k \modd N,k \modd N, kb \modd N) \in [0,N]\times[0,N]\times[0,N].
$$
The first two coordinates of such points coincide with each other.
Therefore, it is a local minimum in $\Gamma(1,1,b)$
if and only if the point $(k\modd N,kb \modd N)$ is a local minimum in the lattice
$\Gamma(b,N)\subset\z^2$. Therefore, the set of all relative minima is as follows:
$$
\big\{
\gamma_x, \gamma_y, \gamma_z, \tilde \gamma_1,\ldots, \tilde \gamma_s
\big\},
$$
where $\gamma_x=(N,0,0)$, $\gamma_y=(0,N,0)$, and $\gamma_z=(0,0,N)$.
So the vertices of the Minkowski-Voronoi complex correspond to the following triples
$$
(\gamma_z,\tilde\gamma_1,\gamma_x), (\gamma_z,\tilde\gamma_1,\gamma_y),
(\tilde\gamma_1,\tilde\gamma_2,\gamma_x), (\tilde\gamma_1,\tilde\gamma_2,\gamma_y),
\ldots,
(\tilde\gamma_{s-1},\tilde\gamma_{s},\gamma_x), (\tilde\gamma_{s-1},\tilde\gamma_{s},\gamma_y),
(\tilde\gamma_{s},\tilde\gamma_x,\gamma_y).
$$
Direct calculations show that the corresponding diagrams are as stated in the theorem.
\end{proof}

\subsection{The case of lattices $\Gamma(2,b,N)$}

We conjecture that the alphabet for the case $a=2$ consists of 14 letters.

\begin{conjecture}\label{conjecture-3}
Let $b$ and $N$ be relatively prime positive integers, such that $b\le\frac{N}{2}$.
Then the canonical diagram of Minkowski-Voronoi complex for the lattice $L(2,b,N)$ is defined by the words
whose letters are contained in the following alphabet:

\begin{center}
\begin{tabular}{cccccccccccccc}
$$\includegraphics{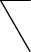}$$ & $$\includegraphics{letters-2}$$ &
$$\includegraphics{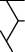}$$ & $$\includegraphics{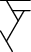}$$ &
$$\includegraphics{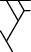}$$ & $$\includegraphics{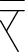}$$ &
$$\includegraphics{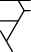}$$ & $$\includegraphics{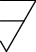}$$ &
$$\includegraphics{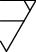}$$ & $$\includegraphics{letters-10}$$ &
$$\includegraphics{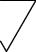}$$ & $$\includegraphics{letters-12}$$ &
$$\includegraphics{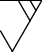}$$ & $$\includegraphics{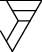}$$\\
$0$&$A$&$a$&$b$&$c$&$p$&$q$&$x$&$y$&$z$&$1$&$2$&$3$&$4$\\
\end{tabular}
\end{center}
\end{conjecture}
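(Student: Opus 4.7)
The plan is to mimic the strategy behind Theorem~\ref{White} (the $a=1$ case), but replacing the two-dimensional continued fraction enumeration by the finer classification of relative minima developed in Section~\ref{stab}. The first ingredient is the stabilization Theorem~\ref{stabilization} together with Remark~\ref{finitely_many}: for each fixed combinatorial part $(\alpha,\beta,\gamma,2)$ there are only finitely many combinatorial types of Minkowski--Voronoi complexes as $(t,u)$ vary. Consequently, verifying the conjecture reduces to analysing a bounded catalogue of ``building blocks'' and checking that the geometric transitions between them can all be encoded by the fourteen proposed letters.

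The second step is to make the horizontal cuts in the canonical diagram explicit and to relate them to the interval decomposition from Subsection~\ref{Classification of relative minima}. For $a=2$ the interval $[0,N/2]$ splits as $I_0\cup I_1\cup I_2\cup I_3$, and I would argue that the cuts in the canonical diagram correspond precisely to the level sets $x=kN/(2a)$, $k=0,1,2,3$. Using the explicit formulas~\eqref{f1}--\eqref{f4} one enumerates which relative minima occupy each strip (at most one of the first type, at most four of the second type, and a uniformly bounded number of the third type), and Proposition~\ref{ListCoords} then produces closed-form expressions $(|A_1N+C_1u+D_1|_N, |A_2N+C_2u+D_2|_N, |A_3N+C_3t+D_3|_N)$ for their coordinates. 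The combinatorics of the local portion of $MV(\Gamma(2,b,N))$ inside a strip is determined by the orderings among finitely many such coordinate functions, and Proposition~\ref{ModComparison} guarantees that these orderings are asymptotically stable.

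The core of the proof would then be a finite case analysis. For each admissible combination of residues and parities of $k$, and for each prescription of which of the three types are actually realised in the adjacent strips $I_{k-1},I_k,I_{k+1}$, one draws the corresponding local sub-diagram using the algorithm of Section~\ref{Canonical diagrams}, checks via Proposition~\ref{types} and Theorem~\ref{face-shape} that the resulting vertex-stars and face-shapes are forced, and identifies the outcome with one of the fourteen letters. The transitions between consecutive letters along the word are then dictated by the Step~3 ray-extension rule from Definition~\ref{Minkowski--Voronoi tessellation} together with the monotonicity statements of Proposition~\ref{ascending}.

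The main obstacle I expect is \emph{completeness}: showing that no additional letter is ever needed. In the $a=1$ case, the two-dimensional continued fraction of $N/b$ provides a structural reason why only two letters suffice; for $a=2$ there is no such shortcut, and one must confront every combination of Type 1, 2 and 3 minima across four consecutive strips, including degenerate configurations where several of the type-2 candidates in~\eqref{f2}--\eqref{f3} coincide or fall outside $I_{k,2}$. A secondary difficulty lies in the boundary strips $I_0$ and $I_3$, where the letters must attach correctly to the infinite rays incident to the axial vertices $v_L,v_R,v_B$; the exceptional regime $t=0$ treated separately at the end of Subsection~\ref{Conclusion of the proof} will likely require its own careful inspection and is a plausible source of missing or redundant letters in the alphabet as stated.
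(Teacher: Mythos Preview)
The statement you are trying to prove is labelled \emph{Conjecture} in the paper, and the paper does not supply a proof: the only evidence offered is the remark that it has been checked by computer for $b\le 11$ and some further sporadic examples. So there is no ``paper's own proof'' to compare against; any argument you give would be new.

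That said, your proposed strategy has a genuine gap at its very first step. You invoke Theorem~\ref{stabilization} and Remark~\ref{finitely_many} to conclude that ``verifying the conjecture reduces to analysing a bounded catalogue of building blocks''. But the stabilization theorem gives finitely many combinatorial types \emph{for each fixed combinatorial part} $(\alpha,\beta,\gamma,2)$, and there are infinitely many such combinatorial parts: $\alpha$ ranges over all residues $N\bmod b$ and can be arbitrarily large, and $\beta$ then ranges up to $2\alpha-1$. Corollary~\ref{ListVoronoi}(ii) in fact bounds the number of relative minima by $\alpha+5a+3=\alpha+13$, which grows with $\alpha$; the length of the word therefore grows without bound, and the number of Type~3 minima in a strip is \emph{not} uniformly bounded as you assert. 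So the stabilization theorem does not by itself reduce the conjecture to a finite case check.

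A smaller slip: for $a=2$ the decomposition of $[0,N/2)$ in Subsection~\ref{Classification of relative minima} has only two intervals $I_0\cup I_1$ (the index runs over $0\le k\le a-1$), not four as you write. What a proof would actually require is a structural argument, uniform in $\alpha$, that the local combinatorics between consecutive relative minima along the word can only assume one of the fourteen listed shapes; this is exactly the ``completeness'' obstacle you flag at the end, and neither the stabilization theorem nor the interval decomposition of Section~\ref{stab} currently provides the uniformity you need.
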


\begin{remark}
For simplicity we substitute the letters of this alphabet by the characters
$0$,  $A$,  $a$,  $b$, $c$, $p$,  $q$, $x$, $y$, $z$,  $1$--$4$ as above.
Letters $0$ and $A$ always take the first position.
The remaining part of the word splits in the blocks of two types.
A {\it simple block} is a block with a simple number $0$, $1$, $2$, $3$, or $4$.
A {\it nonsimple block} starts
with $A$,  $a$,  $b$, or $c$ it can have several letters $p$ and $q$ in the middle and ends with $x$, $y$, or $z$.
We separate such blocks with spaces.
So in some sense a word in the original alphabet is a sentence in characters
$0$,  $A$,  $a$,  $b$, $c$, $p$,  $q$, $x$, $y$, $z$,  $1$--$4$.
\end{remark}

\begin{remark}
The conjecture is checked for all lattices $L(2,b,N)$ with $b\le 11$ and some other particular examples.
\end{remark}

\begin{example}
Let us consider the lattice $\Gamma(2,26,121)$. The corresponding Minkowski-Voronoi complex
has the following canonical diagram:
$$\includegraphics{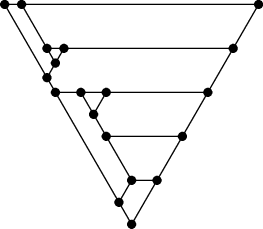}$$
The corresponding word is
$
\includegraphics{let_sm-1} \includegraphics{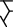} \includegraphics{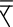}
\includegraphics{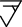} \includegraphics{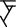} \includegraphics{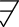}
$,
which is written in new characters as: $0$ $apz$ $bx$.
\end{example}

\begin{example}
Finally, let us list all stable configurations
that are described in Minkowski-Voronoi complex stabilization theorem
for $a=2$ and $\alpha\le 6$ (in the notation of Theorem~\ref{stabilization}).

\begin{center}
\begin{tabular}{|l|l|l|c|}
\hline
$\alpha=1$ & $\beta=1$, $\gamma=0$ & $u\ge 2$, $v\ge 2$ & $0$ $3$ $2$ \\
\hline
$\alpha=2$ & $\beta=1,3$; $\gamma=1$ & $u\ge 1$, $v\ge 1$ & $Az$ $2$\\
\hline
$\alpha=3$ & $\beta=1$; $\gamma=0$ & $u\ge 2$, $v\ge 2$ &  $0$ $2$ $3$ $2$\\
           & $\beta=2$; $\gamma=0,1$ & $u\ge 1$, $v\ge 1$ & $Ax$ $bx$\\
           & $\beta=4$; $\gamma=0,1$ & $u\ge 1$, $v\ge 1$ & $0$ $2$ $bx$\\
           & $\beta=5$; $\gamma=0$ & $u\ge 2$, $v\ge 1$ & $Ax$ $3$ $2$\\
\hline
$\alpha=4$ & $\beta=1,5$; $\gamma=1$ & $u\ge 1$, $v\ge 1$ & $0$ $bz$ $2$\\
           & $\beta=3,7$; $\gamma=1$ & $u\ge 1$, $v\ge 1$ & $0$ $apz$ $2$\\
\hline
$\alpha=5$ & $\beta=1$; $\gamma=0$ & $u\ge 2$, $v\ge 2$ & $0$ $3$ $3$ $2$\\
           & $\beta=2$; $\gamma=0,1$ & $u\ge 1$, $v\ge 1$ & $Az$ $bx$\\
           & $\beta=3$; $\gamma=0$ & $u\ge 2$, $v\ge 2$ & $Apy$ $3$ $2$\\
           & $\beta=4$; $\gamma=0,1$ & $u\ge 1$, $v\ge 1$ & $0$ $4$ $bx$\\
           & $\beta=6$; $\gamma=0,1$ & $u\ge 1$, $v\ge 1$ & $0$ $3$ $bx$\\
           & $\beta=7$; $\gamma=0$ & $u\ge 2$, $v\ge 1$ & $Az$ $3$ $2$\\
           & $\beta=8$; $\gamma=0,1$ & $u\ge 1$, $v\ge 1$ & $Apy$ $bx$\\
           & $\beta=9$; $\gamma=0$ & $u\ge 2$, $v\ge 1$ & $0$ $4$ $3$ $2$\\
\hline
$\alpha=6$ & $\beta=1,7$; $\gamma=1$ & $u\ge 1$, $v\ge 1$ & $0$ $bz$ $2$\\
           & $\beta=5,11$; $\gamma=1$ & $u\ge 1$, $v\ge 1$ & $0$ $cpz$ $2$\\
\hline
\end{tabular}
\end{center}
\end{example}

Experiments show that not all possible configurations of letters are realizable.
One of the natural questions here is as follows.

\begin{problem}
{\it $($i$)$} Find combinations of letters that are not realizable for $\Gamma(2,b,N)$ lattices.

{\noindent
{\it $($ii$)$} Find combinations of letters that are not realizable for rank-1 integer lattices.
}
\end{problem}

Finally we would like to raise the following general question for $a>2$.
\begin{problem}
Let $a>2$ be an integer. Does there exist a finite alphabet describing all the diagrams for $\Gamma(a,b,N)$?
\end{problem}
In fact, we have some evidences of the existence of finite alphabets for $a>2$, although the number of letters in them
might be relatively large.

\vspace{5mm}

{\noindent
{\bf Acknowledgement.} Some part of the work related to this paper was performed at TU Graz.
Both authors are grateful to TU Graz for hospitality and excellent working conditions.
We are grateful to the unknown reviewer for valuable comments and remarks,
in particular for providing us with an exhaustive example of a Minkowski-Voronoi complex computation (Example~\ref{ex-ref}).
Oleg Karpenkov is partially supported by EPSRC grant EP/N014499/1 (LCMH).
Alexey Ustinov is supported by Grant of the Government of Khabarovsk Krai (Order N 479-p, June 29, 2016).
}

\vspace{1cm}


\begin{thebibliography}{99}

\bibitem{Avdeeva2003b} M.O.~Avdeeva, V.A.~Bykovskii,
\textit{An analogue of Vahlen's theorem for simultaneous approximations of a pair of numbers},
Mat. Sb., 2003, 194, 3--14.


\bibitem{Avdeeva2006b} M.O.~Avdeeva, V.A.~Bykovskii,
\textit{Refinement of VahlenТs Theorem for Minkowski Bases of Three-Dimensional Lattices},
Mathematical Notes, 2006, 79, 151-Ц156.



\bibitem{Buchmann1985} J.~Buchmann,
\textit{A generalization of {V}orono\u\i's unit algorithm. I},
J. Number Theory, 1985, 20, 177--191.

\bibitem{Buchmann1985a} J.~Buchmann,
\textit{A generalization of {V}orono\u\i's unit algorithm. II},
J. Number Theory, 1985, 20, 192--209.

\bibitem{Buchmann1987} J.~Buchmann,
\textit{The computation of the fundamental unit of totally complex quartic orders},
Math. Comput., 1987, 48, 39--54.

\bibitem{Buchmann1987a} J.~Buchmann,
\textit{On the computation of units and class numbers by a generalization of Lagrange's algorithm},
J. Number Theory, 1987, 26, 8--30.

\bibitem{Bykovskii1999} V.A.~Bykovskii,
\textit{Vahlen's theorem for two-dimensional convergents of continued fractions},
Mat. Zametki, 1999, 66, 30--37.

\bibitem{Bykovskii2002b} Bykovskii, V. On the error of number-theoretic
quadrature formulas. Chebyshevskii Sbornik, 2002, 3, 27--33.

\bibitem{Bykovskii2003a} Bykovskii, V. On the error of number-theoretic
quadrature formulas. Dokl. Math. , 2003, 67, 175-176

\bibitem{Bykovskii2006} V.A.~Bykovskii, \textit{Local
Minima of Lattices and Vertices of Klein Polyhedra}, Funkts. Anal.
Prilozh., 2006, 40:1, 69-Ц71.

\bibitem{Bykovskii2012} V.~Bykovskii,
\textit{The discrepancy of the Korobov lattice points},
Izvestiya: Mathematics 76:3, 2012, 446--465.



\bibitem{Bykovskii2004a} V.~Bykovskii,
\textit{An Algorithm for Computing Local Minima of Lattices},
Dokl. Math. 70:3, 2004, 928--930.

\bibitem{Bykovskii2011a} V.~Bykovskii, S. Gassan
\textit{An Algorithm for Computing Local Minima of Lattices and
its Applications}, Bulletin of PNU 20:1, 2011, 39--48.





\bibitem{Cassels1955} J.W.S.~Cassels,
\textit{Swinnerton-Dyer H. P. F. On the product of three homogeneous linear forms and the indefinite ternary quadratic forms},
Philos. Trans. Roy. Soc. London. Ser. A., 1955, 248, 73--96.

\bibitem{Cassels1997} J.W.S.~Cassels,
\textit{An introduction to the geometry of numbers},
Springer-Verlag, Berlin, 1997, viii+344.

\bibitem{Cusick1971} T.W.~Cusick,
\textit{Diophantine approximation of ternary linear forms},
Math. Comp., 1971, 25, 163--180.

\bibitem{Cusick1983}
T.W.~Cusick, \textit{The two-dimensional Diophantine approximation constant},
II Pacific J. Math., 1983, 105, 53--67.

\bibitem{Davenport1943}
H.~Davenport,
\textit{On the product of three homogeneous linear forms},
IV Proc. Cambridge Philos. Soc., 1943, 39, 1--21.



\bibitem{Delone1940} B.N.~Delone, D.K.~Faddeev,
\textit{The theory of irrationalities of the third degree},
Providence, American Mathematical Society, 1964.

\bibitem{Delone1947}
B.N.~Delone, \textit{The St. Petersburg school of number theory.
History of Mathematics, 26}, American Mathematical Society,
Providence, RI, 2005.

\bibitem{Felsner2008}
S.~Felsner, F.~Zickfeld,
\textit{Schnyder woods and orthogonal surfaces},
Discrete Comput. Geom., Springer-Verlag, New York, NY,
2008, 40, 103--126.

\bibitem{Furtwangler1920}
Ph.~Furtw{\"a}ngler, M.~Zeisel,
\textit{Zur Minkowskischen Parallelepipedapproximation},
Monatsh. f. Math., 1920, 30, 177--198.

\bibitem{German2006}
O.N.~German, \textit{Klein polyhedra and relative minima of lattices}.
Math. Notes, 2006, 79, 505--510.

\bibitem{Hancock1964a}
H.~Hancock,
\textit{Development of the Minkowski geometry of numbers},
Dover Publications Inc., 1964, vol. 1--2.

\bibitem{Hermite1839}
C.~Hermite,
\textit{Letter to C.D.J.~Jacobi},
J. Reine Angew. Math., 40:286, 1839.

\bibitem{Illarionov2012} A. A. Illarionov, УThe average number of relative minima of three-dimensional integer lattices of a given determinantФ, Izv. Math., 76:3, 2012, 535Ц562.

\bibitem{KarpenkovBook}
O.~Karpenkov, {\it Geometry of continued fractions},
Algorithms and Computation in Mathematics, 26, Springer-Verlag, Berlin,
2013, xvii+405.

\bibitem{Korobov2004}
Korobov, N. M. {\it Number-theoretic methods in approximate analysis}
(Second edition), MCCME, Moscow, 2004.

\bibitem{bookk}
E.~Miller, B.~Sturmfels,
{\it Combinatorial commutative algebra},
Graduate Texts in Mathematics, 227, Springer-Verlag, New York, 2005, xiv+417 pp.


\bibitem{Minkowski1896}
H.~Minkowski,
\textit{Generalisation de la theorie des fractions continues},
Ann. de l'Ecole Norm., 1896, 13, 41--60.


\bibitem{Minkowski1911}  H.~Minkowski,  \textit{Zur Theorie der Kettenbr\"uche.} In
``Gesammelte Abhandlungen''. Leipzig: Druck und Verlag von B.
G. Teubner, 1911, 1, 278--292.

\bibitem{Niederreiter1986} H. Niederreiter, \textit{Dyadic fractions with small partial quotients},
Mh. Math., 101:4, 1986, 309Ц315.

\bibitem{Pepper1939} P.M.~Pepper,
\textit{Une application de la g\'eom\'etrie des nombres \`a une
g\'en\'eralisation d'une fraction continue.} Annales scientifiques
de l'{\'E}.N.S., 3 s{\'e}rie, 1939, 56, 1--70.

\bibitem{Ramharter1996}
G.~Ramharter,
\textit{On Mordell's inverse problem in dimension three},
J. Number Theory, 1996, 58, 388--415.


\bibitem{Swinnerton-Dyer1971} H.P.F.~Swinnerton-Dyer,
{\textit On the product of three homogeneous linear forms},
Acta Arith., 1971, 18, 371--385.



\bibitem{Ustinov2012}
A.V.~Ustinov,
\textit{Minimal Vector Systems in 3-Dimensional
Lattices and Analog of Vahlen's Theorem for 3-Dimensional
Minkowski's Continued Fractions,} Mathematics and
Informatics, 1, Dedicated to 75th Anniversary of Anatolii
Alekseevich Karatsuba, Sovrem. Probl. Mat., 16, Steklov
Math. Inst., RAS, Moscow, 2012, 103-Ц128.

\bibitem{Ustinov2013} Ustinov~A.\,V.
\textit{On the Three-Dimensional Vahlen Theorem.} Mat. Zametki,
95:1, 2014, 154Ц-156.

\bibitem{Ustinov2015} Ustinov~A.\,V.
\textit{Three-dimensional continued fractions and Kloosterman sums,} Uspekhi Mat. Nauk,
70:3, 2015, 107--180.



\bibitem{Voronoi1896} G.F.~Voronoi,
\textit{On a generalization of the algorithm of continued fractions.}  PhD Dissertation, Warsaw, 1896.



\bibitem{Voronoi1952} G.F.~Voronoi,
\textit{Collected Works} [in Russian], vol. 1, Kiev, 1952.

\bibitem{WhiteCite}
G.K.~White, \textit{Lattice tetrahedra.} Canad. J. Math.,  1964, 16, 389--396.


\bibitem{Williams1980}  H.~ Williams, G.~Cormack, E.~Seah,
\textit{Calculation of the regulator of a pure cubic field.}
Math. Comput., American Mathematical Society, Providence, RI,
1980, 34, 567--611.

\bibitem{Williams1981}
H.~Williams,
\textit{Some results concerning Voronoi's continued fraction over $Q(D^{1/3})$.}
Math. Comput., 1981, 36, 631-652.

\end{thebibliography}
\end{document}